\definecolor{airforceblue}{rgb}{0.36, 0.54, 0.66}
\definecolor{britishracinggreen}{rgb}{0.0, 0.26, 0.15}
\definecolor{cadmiumgreen}{rgb}{0.0, 0.42, 0.24}
\definecolor{amber(sae/ece)}{rgb}{1.0, 0.49, 0.0}
\newcommand{\ee}{\normalcolor}
\newcommand{\nc}{\normalcolor}
\def\cdott{\!\cdot\!}
\newcommand{\R}{\mathbb{R}}
\newcommand{\rd}{\mathbb{R}^d}
\newtheorem{thm}{Theorem}[section]
\newtheorem{lemma}[thm]{Lemma}
\newtheorem{proposition}[thm]{Proposition}
\theoremstyle{remark}
\newtheorem{remark}[thm]{Remark}
\theoremstyle{Theorem}
\newtheorem{definition}[thm]{Definition}
\newenvironment{description*}%
  {\begin{description}
    \setlength{\itemsep}{0.33em}
  }
  {\end{description}}
\let\orgdescriptionlabel\descriptionlabel
\renewcommand*{\descriptionlabel}[1]{%
  \let\orglabel\label
  \let\label\@gobble
  \phantomsection
  \edef\@currentlabel{#1}%
  \let\label\orglabel
  \orgdescriptionlabel{#1}%
}
\newcommand{\vep}{\varepsilon}
\newcommand{\vfi}{\varphi}
\newcommand{\beq}{\begin{equation}}
\newcommand{\eeq}{\end{equation}}
\newcommand{\rife}[1]{{(\ref{#1})}}
\newcommand{\dive}{{\rm div}}
\newcommand{\into}{{\int_{\R^d}}}
\def\de{\delta}
\def\cI{\mathcal{L} }
\def\dschi{\mathds{1}}
\def\vep{\varepsilon}
\def\vfi{\varphi}
 \def\la{\lambda}
 \def\cL{\mathcal{L}}
  \def\cP{\mathcal{P}}
    \def\cM{\mathcal{M}}
\def\rife#1{(\ref{#1})}
\begin{document}
\title[Long time behaviour]{Long time behaviour of Mean Field Games with fractional diffusion}

 \author{Olav Ersland}
\address{Department of Mathematical Sciences, Norwegian University of Science and Technology NTNU, Norway, Email address: olav.ersland@ntnu.no} 
\thanks{O. Ersland was supported by the Research Council of Norway through the Toppforsk (research excellence) grant agreement no. 250070 {\em Waves and Nonlinear Phenomena (WaNP)}}

\author{Espen R. Jakobsen}\address{Department of Mathematical Sciences, Norwegian University of Science and Technology NTNU, Norway
Email address: espen.jakobsen@ntnu.no
}
\thanks{E. R. Jakobsen was supported by the Research Council of Norway through the agreement no. 325114 {\em IMod. Partial differential equations, statistics and data: An interdisciplinary approach to data-based modelling}}

  \author{Alessio Porretta}
\address{Department of Mathematics, University of Rome Tor Vergata, Via della Ricerca Scientifica 1, 00133 Roma, email: porretta@mat.uniroma2.it.}
\thanks{
A. Porretta was partially supported by Indam Gnampa projects, by the Excellence Project MatMod@TOV of the Department of Mathematics of the University of Rome Tor Vergata,  by Kaust project ORA-2021-CRG10-4674.5 (\lq\lq Mean-field games; models, theory, and computational aspects\rq\rq) and by   Italian (EU Next Gen, Missione 4 Componente 1) PRIN project 2022W58BJ5 ({\it PDEs and optimal control methods in mean field games, population dynamics and multi-agent models}, CUP E53D23005910006).}

\maketitle

\keywords{}

\begin{abstract}
    In this paper we study the long time behaviour of  mean field games systems with fractional diffusion, modeling the case that the  individual dynamics of the players is driven by independent jump processes and controlled through the drift term, while being confined by an external field in order to guarantee ergodicity. In the case of globally Lipschitz, locally uniformly convex Hamiltonian, and weakly coupled costs satisfying the Lasry-Lions monotonicity condition, we prove that there is a unique solution $(u_T,m_T)$ to the mean field game problem in $(0,T)$ and we show that, if $T$ is sufficiently large, $(u_T,m_T)$ satisfies the so-called turnpike property, namely it is exponentially close to the (unique) stationary ergodic state for any proportionally long intermediate time.
\end{abstract}

\tableofcontents

\section{Introduction}

In this paper we analyze a class of forward-backward systems of partial differential equations (PDEs) arising in the theory of Mean Field Games (MFGs).  This theory was initiated by J.-M. Lasry and P.-L. Lions \cite{LL06cr1,LL06cr2,lasry2007mean}, and extensively developed by P.-L. Lions in several courses held at Coll\`ege de France \cite{L-college}. Similar ideas were introduced at the same time by P. Caines, M. Huang and R. Malham\'e \cite{HCM} from the perspective of McKean-Vlasov stochastic control. The purpose of this theory is to describe Nash equilibria in large populations of  (rational) agents, by deriving a mean-field limit of Nash equilibria in $N$-players differential games under suitable symmetry and weak interaction assumptions. The macroscopic description proposed in the limit can be summarized by a system of PDEs coupling the equation of the value function of the generic agent with the equation of the distribution density of the population. Usually the agents are described as dynamical states,   often perturbed by (Gaussian) noise, and    the density equation takes the form of a continuity, or Fokker-Planck equation, for the corresponding law. 

In the present paper, we consider a system of this kind   involving fractional diffusions coming from non-Gaussian individual noises of Poissonian/Levy 
type,  
\begin{align}
  \begin{cases}
    \begin{aligned}
       &  -\partial_t u - \mathcal{L} u (x ) + H ( x, Du ) +  b(x) \cdott Du      = F ( x, m ( t ) ) && \text{ in } ( 0,T ) \times \R^d, \\[0.2cm]
       &  \partial_t m - \mathcal{L}^* m (x) - div \big( m \big( b ( x )
        +D_p H ( x, Du  )\big)  \big)  = 0 && \text{ in } ( 0,T ) \times \R^d, \\[0.2cm]
     &  m ( 0) = m_0  ,\qquad u (T )  = u_T,
      \end{aligned}
  \end{cases}
  \label{eqn:parabolic_MFG}
\end{align}
  where $d\geq1$ and 
  the diffusion operator $\mathcal{L}$ is the infinitesimal generator of a pure jump L\'evy process in $\R^d$,   a nonlocal 
operator defined on smooth functions $\vfi$ as 
\begin{align}
    \mathcal{L}  \varphi(x) = \int_{\R^d} \left\{\varphi ( x+z ) - \varphi ( x ) -  D \varphi ( x )\cdott  z   \dschi_{| z | \leq 1} \right\} \nu ( dz ),  
    \label{def:nonlocal_diffusion}
\end{align}
where $\nu$ is a nonnegative Radon measure satisfying the  L\'evy condition $\int (1\wedge |z|^2)\nu(dz)<\infty$.   A prototypical example is 
the fractional Laplacian $\mathcal L=-(-\Delta)^{\frac\sigma2}$ for $\sigma\in (0,2)$,   the infinitesimal generator of the $\alpha$-stable processes with $\alpha=\sigma$ and $\nu(dz)=|z|^{-d-\sigma}\,dz$. This type of nonlocal diffusions
has many applications in science and economy \cite{Wo01,MK:04,CT:Book,Ru:Book,Sc:Book} and has generated a lot of activity in different areas of mathematics like nonlocal PDEs \cite{CS:07,BG:Book,BV:Book}, stochastic processes \cite{Apple,Be:Book,BS:Book,Sa:Book}, control theory \cite{OS:Book,Ha:Book}, and nonlocal operator and potential theory \cite{BS:Book,Gr:Book,Ja:Book} to mention some.   We will here restrict our analysis to the diffusive   or sub-critical   case where $\sigma>1$. 

The vector field $b(x)$ in \rife{eqn:parabolic_MFG}
  is assumed to be   a confining Ornstein-Uhlenbeck (OU) drift term,
whose role is to center the mass of the system, in order to give compactness in the long time. The functions $H(x,Du)$, 
$F(x,m)$,   and $u_T(x)$   are related to the underlying control problem, induced by the dynamics and by the costs 
of the agents' optimization problem. The typical interpretation of \rife{eqn:parabolic_MFG} in   MFG   
theory is the following. A generic agent in a (large) population is represented through a dynamical state   $X_\tau$   given by a controlled L\'evy process, and faces the standard   finite horizon   stochastic control problem \cite{OS:Book,Ha:Book},
\beq\label{dyncon}
\begin{split}
& \begin{cases}
  dX_\tau= \alpha_\tau\, d\tau + b(X_\tau)d\tau + dP_\tau &  \\
X_t= x &   
\end{cases}
\\
& \qquad\qquad\qquad 
\rightsquigarrow \quad u(t,x)= \inf_{
\alpha_\tau\in \mathcal A
} {\mathbb E} \left\{ {\small \int_t^T [L(X_\tau, \alpha_\tau)+ F(X_\tau,m_\tau)] + u_T(X_T)} \right\},
\end{split}
\eeq
where $P_\tau$ is a pure jump   Levy   process   with infinitesimal generator $\mathcal L$ given by \rife{def:nonlocal_diffusion} 
\cite{Apple,Sa:Book}, $\mathcal A$ is the set of admissible $A$-valued control processes $\{\alpha_\tau\}$, and $\{m_\tau\}$ is a family of measures given exogeneously and represents 
the anticipated distribution of agents.   The Hamiltonian $H$ is defined as usual as
$$
H(x,p)= \inf_{\alpha \in A} [L(x,\alpha)+ \alpha\cdott p],
$$ 
and the functions $L,F,u_T$ represent 
  running and terminal costs, possibly depending  on 
$\{\alpha_\tau\}$ and  $\{m_\tau\}$.  
The coupling function $F(x,m)$ is assumed to be   smoothing/non-local and   (at least) continuous on $\R^d\times \cP(\R^d)$, where $\cP(\R^d)$ is the space of probability measures on $\R^d$. 
By dynamic programming, the value function $u$ satisfies a 
Hamilton-Jacobi-Bellman (HJB) equation corresponding to the first equation in \rife{eqn:parabolic_MFG}, and the optimal feedback control   is   given 
by $D_pH(x,Du(t,x))$. Assuming consistency at the Nash equilibrium means that the anticipated distribution of agents coincides with the real law of the controlled process $X_\tau$, which satisfies a 
Fokker-Planck equation   corresponding to the second equation   in \rife{eqn:parabolic_MFG}   with initial distribution 
of agents given by 
$m_0$.   This explains (at macroscopic level) the above forward-backward system of PDEs.

Many results exist nowadays for MFG systems from both analytical and probabilistic perspective (PDE or stochastic control approach), and we refer the interested reader to the monographs \cite{CP-CIME,CaDe}   and references therein for an overview.
However, MFGs with non-Gaussian noise and nonlocal diffusion operators $\mathcal L$ have been much less studied until recent times. 
 Well-posedness of classical solutions was obtained in 
\cite{cesaroni+al} for stationary 
MFG systems on the torus with $\mathcal L=(-\Delta)^{\sigma/2}$ for $\sigma\in (1,2)$, and extended to time-dependent problems and $\sigma \in (0,2)$ in \cite{CiGo} 
(and weaker type of solutions for $\alpha\in (0,1]$). Using the moment-free tightness/compactness approach of \cite{ChJaKr21}, well-posedness was obtained  in \cite{ersland2020classical} for problems on the whole space for more general nonlocal operators $\mathcal L$ of order greater than $1$. 
This latter result covers the setting we work in here, in addition to operators 
like $(-\partial^2_{x_1})^{\sigma_1/2}+(-\partial^2_{x_2})^{\sigma_2/2} +\ldots +(-\partial^2_{x_d})^{\sigma_d/2}$,  tempered and non-symmetric operators from Finance \cite{CT:Book,Sc:Book,Ru:Book}, and operators generating processes with no finite moments. See \cite{JR25} for extensions to less regular data and mixed local-nonlocal diffusions.
Problems with non-separable Hamiltonians have been considered in \cite{YZL22}, 
mixed local-nonlocal Bertrand and Cournot MFGs in \cite{GIN21}, and MFGs with controlled nonlocal (and local) diffusions in \cite{ChJaKr21,ChJaKr25}. Numerical approximations were studied in \cite{CEJ23} and a  fractional Master equation in \cite{JR25}. 
Finally, there are some results for MFGs with jump-diffusion processes from a probabilistic point of view in \cite{BCD,CaDe,Ca22,KoT}. 


\vskip1em
In this work, our focus is on the long-term and ergodic behavior of the system, namely on the analysis of   \eqref{eqn:parabolic_MFG} as $T \to +\infty$. In a regime of stability, it is expected that the solutions of the MFG problem in a long horizon $(0,T)$ should spend most of the intermediate time in equilibrium,  say close to a stationary ergodic solution. This kind of behavior is known under the name of {\it turnpike property}, in the terminology coined by the economist P.A. Samuelson  for optimal control problems \cite{DSS}. In the context of MFGs, 
this property has been investigated in a 
series of papers; in particular, under the 
Lasry-Lions monotonicity condition ($F(x,m)$ is monotone in 
$m$), the turnpike property was proved to hold for finite state space \cite{gomes2010discrete} and  for periodic solutions of second order problems \cite{cardaliaguet2012long,cardaliaguet2013nonlocal}, with corresponding quantitative estimates. The exponential rate of turnpike was explained in terms of nondegenerate linearization \cite{P-MTA} and the  ergodic behavior of the master equation  \cite{CaPo}. Outside the case of the Lasry-Lions condition, very little is known, but the exponential turnpike was still proved to hold under some smallness condition on the coupling terms, using either the individual noise of the agents \cite{CiPo} or some favorable condition of the optimal control problem behind \cite{Conforti+al}. Even if some weak form of ergodic behavior and average convergence has been proved even for deterministic MFG systems \cite{bardi-kou,Cannarsa+al,cardaliaguet2013long,masoero2019long}, very few  quantitative results are known outside the case of second order problems. However, in  the very recent preprint \cite{CiMe},  new turnpike results have been obtained for both diffusive and deterministic MFG systems, replacing the classical Lasry-Lions assumption with displacement monotonicity conditions.    

The goal of this article is to provide an exponential turnpike estimate for the solution $(u,m)$ of \eqref{eqn:parabolic_MFG}, thus extending previous results that were so far limited to the case of second order problems and mostly restricted to the case of periodic solutions. 
We will show, indeed, that for long time horizons the solution of \eqref{eqn:parabolic_MFG} will  stabilize (exponentially fast) around the solution of the following stationary ergodic MFG system 
\begin{align}
  \begin{cases}
    \begin{aligned}
       &  \lambda - \mathcal{L} \bar{u}(x) + H ( x,D \bar{u}  ) + b ( x ) \cdott D \bar{u}       = F ( x, \bar{m} ) && \text{ in }  \R^d, \\[0.2cm]
      &   - \mathcal{L}^*  \bar m (x) - div \big( \bar m \big( b ( x )
        +D_p H ( x, D \bar{u} )\big)  \big)  = 0 && \text{ in } \R^d, \\[0.2cm]
       &  \int_{\R^d } \bar{m} = 1 , \qquad  \bar{u} ( 0 ) = 0\,
      \end{aligned}
  \end{cases}
  \label{eqn:ergodic_MFG}
\end{align}
where $\lambda\in \R$ is the so-called ergodic constant.  
%
As usual this  will take the form of a  quantitative estimate stating that $(Du_T,m_T)$ is approximately $(D\bar u, \bar m)$ for any large intermediate period $(\de T, (1-\de)T)$. Namely, 
for every $t\in (0,T)$, 
\beq\label{turnest0}
\|m(t)-\bar m\|_{TV_{k}}+ [u(t)-\bar u]_{\langle x^k\rangle}  +  \|Du(t)-D\bar u \|_{L^\infty(\langle x\rangle^{-k})} \leq M (e^{-\omega t}+ e^{-\omega (T-t)}) 
\eeq
where $ \| \cdot \|_{TV_{k}}$ is the total variation norm weighted by $|x|^k$ (see \rife{tvk}),  $\|\cdot \|_{L^\infty(\langle x\rangle^{-k})}$ is the $L^\infty$ norm weighted by $|x|^{-k}$, and  $[\cdot ]_{\langle x^k\rangle} $ the  
oscillation seminorm weighted by $|x|^{-k}$ (see \eqref{weisem}).    
We refer to the next section for the precise definitions and a proper statement.  
 
\vskip0.4em
Let us stress that   the drift term $b$ plays a key role here to guarantee some confining property of the underlying dynamics, which is needed to invoke some form of ergodicity. This condition is of course not required in the  case of compact state space which  
 is the usual setting 
in the literature, e.g. when studying periodic solutions. In establishing \rife{turnest0}, we will assume that the Hamiltonian $H(x,p)$ is locally uniformly convex and globally Lipschitz continuous in $p$, which  includes  the case that the set of controls $A$ in \rife{dyncon} is   compact.  The coupling cost $F(x,m)$ will be assumed to be smooth and monotone on the set of probability measures, i.e. satisfying the Lasry-Lions monotonicity condition. Those assumptions will be made precise in the next Section. 
Even if a  possible generalization  of this setting could be considered (see also Remark \ref{extensions}), along the lines of other papers mentioned above, we tried to keep here the assumptions on $H$ and $F$ to their simplest version, focusing on the main novelty which is  non-Gaussian driving noise and corresponding PDEs with fractional diffusion operators. 
To our knowledge, this is the first exponential turnpike result for   MFGs driven by  
Levy processes with no Brownian component.  

 \vskip0.4em
To prove 
estimate \eqref{turnest0}, we will need to show a number of   auxiliary 
results 
both for the individual 
HJB and Fokker-Planck equations and for the full systems   appearing in \eqref{eqn:parabolic_MFG} and \eqref{eqn:ergodic_MFG}.  
In Section \ref{HJB} we show
uniform in time global Lipschitz and local second-derivative bounds on the  solution $u$ of the HJB equation, 
 bounds that will be transferred to the solution of the stationary ergodic HJB equation as well. 
Such estimates are quite classical for nonlocal Hamilton-Jacobi equations 
(see e.g. \cite{barles+al1,barles+al2}), although we will need to review and refine some of the arguments to take care of long time robustness of the bounds. 
 Many  of the technical tools needed here are confined to the Appendix. 
 The analysis of the nonlocal Fokker-Planck equation is given in Section \ref{FP-sec} and mostly relies  on results from 
\cite{porretta2024decay} which guarantee the well-posedness of a suitable dual formulation. 
 A fixed point argument and the results of the previous sections are then used in Section \ref{sec:evol} to show well-posedness for the two MFG systems \eqref{eqn:parabolic_MFG} and \eqref{eqn:ergodic_MFG}.  

While Section \ref{HJB} and Section 
\ref{FP-sec} contain some  preliminary  material  needed for the well-posedness of the MFG systems, the main core of the paper is the proof of the  estimate \rife{turnest0} contained in Section \ref{turnpi}. Here we need to carefully mix  the exponential stabilization of the two single equations and the contractivity induced by the Lasry-Lions monotonicity condition.
A nontrivial role is played by the choice of norms in which we can quantitatively measure the above properties.
To this extent, we rely on the recent approach introduced in \cite{porretta2024decay} for the decay  of solutions of nonlocal Fokker-Planck equations, quantified in weighted total variation norm, and obtained as dual to the (global in time) weighted oscillation estimates for HJB equations. However, in order to tackle the difficulty of the forward-backward coupling in the   MFG system, we will need  both to upgrade the oscillation bounds of $u$ into  truly gradient estimates and to combine some Duhamel's type estimates for the nonhomogeneous equations (see Proposition \ref{lem72} and Proposition \ref{estm}) with the usual time-contractivity of the system coming from the monotonicity of $m\mapsto F(x,m)$. 



\subsection*{Notation}
We denote by $\R^d$, $d\geq 1$, the $d$-dimensional Euclidean space (with Lebesgue measure $dx$), and by $I_{d}$  the $d \times d $ identity matrix. $B_r$ denotes the ball centered at the origin with radius $r$. For $x,y\in \R^d$, $x\cdott y$ denotes the scalar product. 

For $x \in  \R^d $, we set $ \langle x  \rangle := \sqrt{1+ | x |^{2} }$, and if needed we equally use $ \langle R  \rangle = \sqrt{1+ R^{2} }$  for a real number $R>0$. We denote by $\mathds{1}_E$ the characteristic function of a set $E\subset \R^d$. Given real numbers $a,b$, we use the standard notation $ a\vee b= \max(a,b)$ and $a\wedge b= \min(a,b)$. 

We use
$C_b(U)$ for  continuous bounded functions on a set $U\subseteq \R^d$, and $C_c(U)$ for  continuous compactly supported functions. The space $C_b^{m} (\rd)$ is the subset of $C_b (\rd)$ with bounded and continuous derivatives of order $m$. Given $T>0$, we set $Q_T := (0,T) \times \rd$ (and $\overline Q_T=  [0,T] \times \rd$); the space $C_b^{l,k} (Q_T)$ is the subset of $C_b (Q_T)$, with $l$ bounded and continuous derivatives in time,
and $k$ bounded and continuous derivatives in space. If continuity holds up to $t=0, T$, we use $C_b (\overline Q_T)$. 

For any $p\in[1,\infty]$,  measurable sets $\Omega$, and nonnegative measurable function $\phi:\Omega\to\R$, we define the following weighted $L^p$-spaces:
\begin{align}\label{wlp}
   L^{p} (\Omega, \phi ) :=  \Big\{ g: \Omega \to \R \text{ measurable} : \|g \phi \|_{L^p(\Omega)} < \infty  \Big\}.
\end{align}
When $\Omega=\R^d$, we simply write $L^{p} (\phi ):=L^{p} (\R^d, \phi )$. 

The space $W^{1,\infty}(\R^d)$ (with its usual norm) denotes bounded functions with bounded (Sobolev) derivatives, which coincides with the space of Lipschitz continuous functions in $\R^d$. Similarly, $W^{k,\infty}(\R^d)$ is the space with $k-$th bounded (weak) derivatives. Eventually, we write $u\in W^{k,\infty}_{loc}(\R^d)$ if $u$ belongs to $W^{k,\infty}(B)$ for any ball $B$ in $\rd$.  As usual, the space $L^\infty(0,T;W^{k,\infty}(\R^d))$ denotes (Bochner measurable) time-space dependent functions $u(t,x)$ such that $\|u(t)\|_{W^{k,\infty}(\rd)}$ is bounded in $(0,T)$.

For $u\in C(\R^d)$ and $\theta\in [0,1]$, we denote by $[u]_\theta$ the seminorm 
$$
[u]_{\theta}:=  \sup_{x,y\in \R^{d}}\,\, \frac{|u(x)-u(y)|}{|x-y|^\theta}
$$
and we write $C^{0,\theta}(\rd)$ for the space of H\"older continuous functions.
We also use the weighted oscillation seminorm:
\beq\label{weisem}
[u]_{\langle x\rangle^k}:=  \sup_{x,y\in \R^{d}}\,\, \frac{|u(x)-u(y)|}{\langle x\rangle^k+ \langle y\rangle^k}\,.
\eeq
We let $\cP (\rd)$ denote the space of Borel probability measures on $\rd$. This 
space is equipped with the Kantorovich-Rubinstein (or bounded-Lipschitz) distance  $d_0$, defined as
\begin{align}
    d_0 (\mu_1 , \mu_2 ) := \sup_{f \in \text{Lip}_{1,1} (\rd)} \bigg\{ \int_{\rd} f (x) d (\mu_1 - \mu_2 ) (x) \bigg\}, \qquad \mu_1,\mu_2 \in \cP (\rd),
    \label{kantorovich-rubinstein}
\end{align}
where
$\text{Lip}_{1,1} (\rd) = \Big\{ f: f \text{ is Lipschitz continuous and } \|f \|_{\infty},\|Df \|_{\infty} \leq 1 \Big\} $. Convergence in $d_0$ is equivalent to weak convergence of measures (weak$-^*$ convergence in $(C_b)^*$). When considering possibly signed (finite Borel) measures, we use the space $\cM(\rd)$  with the total variation norm $\| m\|_{TV}= |m|(\R^d)$.  We denote by $\cP_k(\rd)$ (or $\cM_k$ for signed measures) the subset of   $m$ with finite $k$ moments, i.e. $\langle x  \rangle^k\in L^1(\R^d, d|m|)$. In this case we also use the weighted total variation norm
\beq\label{tvk}
\|m\|_{TV_k}:= \into \langle x  \rangle^k\, d|m|\,.
\eeq

\section{Assumptions and main result}

Let us now make precise the set of assumptions under which we are going to  study problem \rife{eqn:parabolic_MFG}.
As far as the nonlocal operator $\cL$ is concerned, we assume that it is defined as in  \rife{def:nonlocal_diffusion} for some  measure $\nu$ satisfying 
\begin{description*}
    \item[($\nu$)\label{nu'}] (L\'evy measure) $\nu$ is a positive absolutely continuous measure such that, for some $\sigma \in ( 1,2 )$ and $c_{\sigma,d}>0$,
    $$
    \frac 1{c_{\sigma,d}} | z |^{-d-\sigma} \leq \frac{d\nu}{dz} \leq c_{\sigma,d} | z |^{-d-\sigma}\qquad \forall z\in \R^d.
    $$  
    \end{description*}
We notice that no symmetry assumption is made on the measure $\nu(\cdot)$, so that the operator $\cL$ is not necessarily variational. However, the main example is given of course by the fractional Laplace operator.

Assuming $\sigma\in (1,2)$ means that the operator is still in a diffusive regime, and we will indeed use  some regularizing effect due to $\sigma>1$ (see e.g. Proposition \ref{regeff}). We actually believe that considering the less diffusive case $\sigma\in (0,1]$ should also be possible, but this would require a significant additional effort in the technical part that we develop later. This is why we chose not to dig into that range in the present paper, confining ourselves to the range $\sigma\in (1,2)$.

We now proceed with the assumptions required on the first order terms of equation \rife{eqn:parabolic_MFG}.
The vector field $b:\R^d\to \R^d$ is assumed to be continuous and satisfy
  \begin{description*}
    \item[(B1)\label{B}] (Confining drift) there exists $\alpha,\beta > 0$ such that 
        $$(b ( x ) - b ( y ) )\cdot  (x-y)   \geq \alpha |x-y|^{2}-\beta|x-y| \qquad \forall \, x,y \in \mathbb{R}^d.$$
    \item[(B2)\label{B2}] (Regularity of drift) $b$ is locally Lipschitz in $\R^d$.
    \end{description*}
 Of course the prototype example  is given by the Ornstein-Ulhenbeck  drift   $b(x)= \alpha\, x$.
 \vskip0.4em
 
 \begin{remark}\label{rem_cond} (a)   Assumption \ref{B}, {  together with the local boundedness of $b$},  imply that there is $R>0$ {  (depending on $\alpha, \beta$ and  e.g. on  $|b(0)|$)} such that
\begin{align}
 \label{consb1}&b(x)\cdott x\geq   \tfrac12   \alpha|x|^2 \qquad \text{for}\qquad |x|>R,\\  
\label{consb2}& (b(x)-b(y))\cdott (x-y)\geq-\beta|x-y|\qquad \forall\,\,  x,y\in\R^d.   
\end{align}
These two conditions correspond  to assumptions (2.12) and (2.13) (with $\gamma=2$) in \cite{porretta2024decay}; later on, we will use several results of this latter paper in dealing with the Fokker-Planck equation. Condition \rife{consb1} gives the confining property of the drift $b$  (obtained from \ref{B}  which implies $b(x)\cdot x\geq \alpha |x|^2- (\beta+ |b(0)|) |x|$). Condition \rife{consb2},  which is  already contained in \ref{B},  includes  a sort of one-sided \lq\lq regularity\rq\rq  assumption. 
\end{remark}

 \vskip1em
 
 We now come at the assumptions on the Hamiltonian function $H:\R^d\times \R^d\to \R$, that we suppose to enjoy the following properties:
    
    \begin{description*}
    \item[(H1)\label{H1}] (Growth) $H$ is continuous and there is  $C_{H} > 0$ such that for 
        $x,p \in \R^d $, 
        $$|H ( x,p ) | \leq C_{H} ( 1+ |p| ).$$
    \item[(H2)\label{H2}] ($p$-Lipschitz) There is  $L_{H} >0$ such that for 
        $x,p,q \in \R^d $,
        \begin{align*}
            | H ( x,p ) - H ( x,q ) | \leq L_{H} | p-q |.
        \end{align*}
    \item[(H3)\label{H3}] ($x$-Lipschitz) There is $C>0$ 
        such that for $x,y,p \in \R^d $,
        \begin{align*}
            | H ( x,p ) - H ( y,p ) | \leq C ( 1+| p |  )| x-y |. 
        \end{align*}
    \item[(H4)\label{H5}] (Convexity) $H \in C^{2} ( \R^d \times \R^d  )$,
        and for every $K>0$ there is $\alpha_K, C_K >0$ such that for $|p|\leq K$, 
        \begin{align*}
            \alpha_K I_{d} \leq  H_{pp} ( x,p ) \leq C_K I_{d}.
        \end{align*}
    \item[(H5)\label{H6}] (Regularity) $H \in C^{3} ( \R^d \times \R^d  )$,
        and for every $R >0$ there is $C_{R} >0$ such that for $x \in  \R^d $, $p \in B_{R}$,
        $\alpha \in \mathbb{N}_{0}^{N}$, $| \alpha | \leq 3 $,
        $$| D^{\alpha} H ( x,p ) | \leq C_{R}. 
        $$
        \end{description*}
        
        We observe that the regularity assumed in \ref{H6} is not strictly necessary in the arguments we are going to develop later, but it allows for a straight application of some previous results that we are going to use as a starting point.
        \vskip1em
        Finally, we come at the coupling cost function $F:\R^d\times \cP(\R^d)\to \R$, which is assumed to be continuous and satisfy the following:
        \begin{description*}
        
    \item[(F1)\label{F1}] (Monotonicity) 
        \begin{align*}
            &\qquad\int_{\R^d } ( F ( x,m_{1} ) - F ( x,m_{2} ) )\, d ( m_{1} - m_{2} ) \geq 0 \qquad \text{ for all} \,\, m_{1}, m_{2} \in  \cP ( \R^d  ) ,
        \end{align*}
        
    \item[(F2)\label{G}] (Bounded Lipschitz)
       there exists $C_{F} > 0$ such that 
        \begin{align*}
          |F( x, \mu ) | + \frac{| F( x, \mu ) - F ( y, \nu )|}{| x-y | + d_{0} ( \mu, \nu )}  \leq C_{F},
        \end{align*}
       for $x\in\R^d$, $\mu, \nu\in \cP(\R^d)$, where $d_0$ is defined in \eqref{kantorovich-rubinstein}.

  \item[(F3)\label{F3}]   For some $k\in(0,\sigma)$, there exists $C > 0$ such that for all $ \nu\in \cP(\rd)$ and any $\mu\in \cM(\R^d)$ with zero average, 
$$ 
[F(\cdot,\nu+\mu)-F(\cdot ,\nu)]_{\langle x^k\rangle}  +\|D_xF(\cdot,\nu+\mu)-D_xF(\cdot,\nu)\|_{L^\infty(\langle x\rangle^{-k})}\leq C  \,\|\mu\|_{TV_k}.
$$
\end{description*}

These assumptions are fairly standard in the context of MFG, except maybe for the use of $d_0$ instead of the slightly stronger Wasserstein $1$ metric $d_1$. Note that we are dealing with nonlocal/smoothing couplings $F$ and $G$, which are different from local/nonsmoothing coupling considered e.g. in \cite{cardaliaguet2012long}, \cite{CiPo}. In general, local couplings are considered to be harder in terms of technical tools.
\vskip0.4em
We can now make precise the statement of the turnpike result. This is the main result of the paper.

 \begin{thm}\label{main0} Assume hypotheses \ref{nu'}, \ref{B}--\ref{B2}, \ref{H1}--\ref{H6} and \ref{F1}--\ref{F3}. Assume that $m_0\in \cP_k$ for some $k\in (0,\sigma)$, and $u_T\in W^{1,\infty}(\rd)$. Let $(u,m)$ be the solution of \rife{eqn:parabolic_MFG} and let $(\lambda, \bar u, \bar m)$ be the solution of  \rife{eqn:ergodic_MFG}, such that  $\bar m \in \cP_{2k}$. Then there exist   $\omega, M>0$ (independent of $T$) such that 
\beq\label{turnest1}
\|m(t)-\bar m\|_{TV_{k}}+ [u(t)-\bar u]_{\langle x^k\rangle}  +  \|Du(t)-D\bar u \|_{L^\infty(\langle x\rangle^{-k})} \leq M (e^{-\omega t}+ e^{-\omega (T-t)}) 
\eeq
for every $t\in (0,T) $.
\end{thm}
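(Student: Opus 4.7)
My plan is to introduce the differences $w(t,x):=u(t,x)-\bar u(x)-\lambda(T-t)$ and $\mu(t,x):=m(t,x)-\bar m(x)$, so that $(w,\mu)$ solves a forward-backward linearized system whose $w$-equation has source $F(x,m)-F(x,\bar m)$ and whose $\mu$-equation has source $\dive\big(\bar m\,(D_pH(x,Du)-D_pH(x,D\bar u))\big)$, with data $\mu(0)=m_0-\bar m$ and $w(T)=u_T-\bar u$. The two equations are formally in duality through the adjoint $\cL^*$, and the proof combines three ingredients: a Lasry--Lions monotonicity identity, the exponential stabilization of the two single equations quantified in Sections \ref{HJB} and \ref{FP-sec}, and a Duhamel bootstrap that closes the forward-backward loop.

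First I would perform the standard Lasry--Lions duality, testing the $w$-equation against $\mu$ and the $\mu$-equation against $w$ and subtracting. The adjoint structure cancels the transport and diffusion contributions; assumption \ref{F1} makes the source pairing $\int(F(\cdot,m)-F(\cdot,\bar m))\,d\mu$ nonnegative; and the Bregman-type remainder $\int\bar m\,[H(\cdot,Du)-H(\cdot,D\bar u)-D_pH(\cdot,D\bar u)\cdot Dw]$ is bounded below by $\alpha_K\int\bar m\,|Du-D\bar u|^2$ via \ref{H5} together with the uniform Lipschitz bound on $Du$ from Section \ref{HJB}. Integrating on $[0,T]$ yields, uniformly in $T$, the energy estimate $\int_0^T\!\int_{\R^d}\bar m\,|Du-D\bar u|^2\,dx\,dt\leq C$ and the monotonicity of $t\mapsto\int w(t)\,d\mu(t)$.

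Second, I would exploit the single-equation Duhamel representations. For the Fokker--Planck equation I invoke Proposition \ref{estm} and the weighted TV decay of Section \ref{FP-sec} (building on \cite{porretta2024decay}) to get
\[
\|\mu(t)\|_{TV_k}\leq C\,e^{-\omega t}\|m_0-\bar m\|_{TV_k}+C\int_0^t e^{-\omega(t-s)}\|Du(s)-D\bar u\|_{L^\infty(\langle x\rangle^{-k})}\,ds,
\]
the source being controlled via \ref{H5} and $\bar m\in\cP_{2k}$. Dually, the uniform weighted oscillation/gradient estimates of Section \ref{HJB} combined with Proposition \ref{lem72} give the backward Duhamel bound
\[
[w(t)]_{\langle x\rangle^k}+\|Dw(t)\|_{L^\infty(\langle x\rangle^{-k})}\leq C\,e^{-\omega(T-t)}\|u_T-\bar u\|_{W^{1,\infty}}+C\int_t^T e^{-\omega(s-t)}\|\mu(s)\|_{TV_k}\,ds,
\]
where assumption \ref{F3} is precisely designed to realize the duality between $TV_k$ and the weighted oscillation/gradient norm through the source $F(\cdot,m)-F(\cdot,\bar m)$.

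Finally, I would close the loop. A direct Gronwall iteration of the two Duhamel inequalities is not contractive, so I would first use the energy estimate from the monotonicity step to bound uniformly in $T$ the $L^1$-in-time average of $\Phi(t):=\|\mu(t)\|_{TV_k}+[w(t)]_{\langle x\rangle^k}+\|Dw(t)\|_{L^\infty(\langle x\rangle^{-k})}$, and then reinsert this averaged control into the two Duhamel estimates to upgrade average smallness into the pointwise bound \rife{turnest1}; the sum $e^{-\omega t}+e^{-\omega(T-t)}$ reflects the two boundary contributions, the initial datum propagating forward and the terminal datum backward. The main obstacle will be the upgrade from oscillation to genuinely weighted gradient control on $u-\bar u$: the monotonicity naturally produces only the weighted $L^2$ bound $\int\bar m\,|Du-D\bar u|^2$, whereas the $L^\infty(\langle x\rangle^{-k})$ norm of $Du-D\bar u$ is what actually feeds the Fokker--Planck source. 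Bridging these relies on the uniform-in-$T$ local $C^2$ bounds of Section \ref{HJB} together with the fractional decay of $\bar m$ at infinity, and forces the choice of $k\in(0,\sigma)$ small enough to render \ref{F3}, the moment assumption $\bar m\in\cP_{2k}$, and the L\'evy exponent $\sigma$ in \ref{nu'} simultaneously compatible.
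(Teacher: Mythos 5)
Your overall roadmap coincides with the paper's: linearize around $(\bar u,\bar m,\lambda)$, exploit Lasry--Lions duality for an energy identity, invoke Duhamel-type estimates for the two separate equations (Propositions~\ref{lem72} and~\ref{estm}), and close the forward--backward loop. But two specific claims in your sketch are too optimistic as written and would not close without the extra work the paper actually performs.

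First, the assertion that ``integrating on $[0,T]$ yields, uniformly in $T$, the energy estimate $\int_0^T\int\bar m\,|Du-D\bar u|^2\le C$'' does not follow directly from the monotonicity identity. After integrating \eqref{ddtT}, the right-hand side of \eqref{globT} is $\|\mu_0\|_{TV_k}\,[v(0)]_{\langle x\rangle^{-k}}+\|\mu(T)\|_{TV_k}\,[v_T]_{\langle x\rangle^{-k}}$, and neither $[v(0)]_{\langle x\rangle^{-k}}$ nor $\|\mu(T)\|_{TV_k}$ is a priori bounded independently of $T$. The paper must first use \eqref{brackv}, \eqref{brackDv2} and \eqref{estm1} (with a small-$\delta$ absorption) to bound these quantities linearly in the given data plus $\big(\int_0^T\int\bar m\,|Dv|^2\big)^{1/2}$, then feed that back into \eqref{globT} and absorb; this self-consistent bootstrap, producing \eqref{prelmu}--\eqref{prelmdv}, is a genuine step your account skips.

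Second, ``reinsert the averaged control into the Duhamel estimates to upgrade average smallness into the pointwise bound'' is precisely where the real mechanism lives, and an $L^1$-in-time average of $\Phi$ is not enough. The paper uses the sharper \eqref{estm2}, controlling $\int_0^T\|\mu(t)\|_{TV_k}^{\gamma'}dt$ for some $\gamma'>2$ (which requires exploiting the integrable singularity $(t-s)^{-\gamma/\sigma}$ with $\gamma<\sigma$ in the regularizing kernel via H\"older), then selects by Chebyshev intermediate times $\xi_\tau\in[0,\tau/2]$, $\eta_\tau\in[T-\tau/2,T]$ where $\|\mu\|_{TV_k}=O(\tau^{-1/\gamma'}M)$, restricts the Duhamel and monotonicity estimates to $(\xi_\tau,\eta_\tau)$, obtains a factor-$\tfrac12$ contraction on $[\tau,T-\tau]$ for a $\tau$ independent of $T$, and iterates to extract the exponential rate. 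Finally, the oscillation-to-gradient bridge on $u-\bar u$ you flag as the main obstacle is not resolved by local $C^2$ bounds plus moment decay of $\bar m$; it is already built into \eqref{duhamel-Du} of Proposition~\ref{lem72}, whose proof rests on the short-time fractional regularizing effect of Proposition~\ref{regeff}.
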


\begin{remark}\label{extensions} We comment here on possible generalizations that could be considered, but certainly at the expense of much more technicality, once the  result and the strategy of proof of   Theorem \ref{main0} be understood. 

First we point out  that \ref{nu'} could  be generalized to include much more general nonlocal operators, in particular there is no strict reason (out of simplicity of presentation) that the L\'evy measure $\nu(dz)$ have similar polynomial behavior around the singularity $z=0$ and at infinity. In our arguments, the \lq\lq diffusive condition\rq\rq\  \ref{nu'} with $\sigma>1$ is essentially  used  for $|z|$ small,  while the tale at infinity could be different (Gaussian, slowly decaying, etc...) up to modifying the Lyapunov function used in our weighted norms. 

Similarly, the superposition of second order and fractional diffusions seems possible, in our approach, because both the Lipschitz bounds and the decay estimates developed in \cite{porretta2024decay} can be proved in that setting.

As for the drift term, 
we only consider here the case of strongly confining vector fields $b$ (with exponent greater or equal to 2), where  exponential convergence in time to the stationary solution is guaranteed for the Fokker-Planck equation, but one could possibly investigate as well the case of slowly confining drifts, which is well established at least for the single continuity equation, giving polynomial decay in long time (see \cite{porretta2024decay}). We expect however the full MFG system in that case to be much more delicate to handle. 

Of course, a further extension would be to consider a  coupled pay-off at final time, say  $u(T)=G(x,m(T))$ where $G$ satisfies similar conditions as $F$. However, it is understood that the turnpike property is quite indifferent to the choice of the final value $u_T$, so this generalization seems to be quite straightforward. 
\end{remark}

\section{The HJB equation}\label{HJB}
 In this section we give well-posedness results  for
parabolic and ergodic HJB equations and uniform Lipschitz estimates for the corresponding solutions. These results and arguments are similar to the ones in \cite{chasseigne2019priori, nguyen2019large}. We also give local bounds on the time derivatives and second-derivatives in space.  

\subsection{Parabolic HJB equations}
The parabolic HJB equation is given by 
\begin{align}
    \begin{cases}
        \partial_t u - \mathcal{L} u (x) + H ( x,Du ) +  b(x) \cdott Du = f ( t, x ) , &\text{in } ( 0,T ) \times \R^d,  \\
        u ( 0, \cdot ) = u_{0}, & \text{in } \R^d,  
    \end{cases}
    \label{eqn:parabolic_HJB}
\end{align}
which is given here   forward in time to ease presentation.
We recall that assumption \ref{nu'} is in force. 
The nonlocal operator in \eqref{def:nonlocal_diffusion} can be defined as the sum of the operators
\begin{align*}
    &\mathcal{L}_{r} ( x, u,p ) := \int_{| z | \leq r} [ u ( x+z ) - u ( x ) -   p \cdott z  \,  \dschi_{ | z | \leq 1} ( z ) ] \nu ( dz ) ,
   \end{align*}
   and
   \begin{align*}
    &\mathcal{L}^{r}  ( x, u,p ) := \int_{| z | > r} [ u ( x+z ) - u ( x ) -  p \cdott z  \,  \dschi_{ | z | \leq 1} ( z ) ] \nu ( dz ),
\end{align*}
and we note that   $ \mathcal{L}^{1} ( x, g, D g (x) ) $ is finite  for all $g \in C (\rd) \cap L^{\infty}(\langle x\rangle^{-k})$ (defined in \eqref{wlp}) since $k < \sigma$.   We use the notation
\beq\label{notL}
\mathcal{L}  ( x, u,p ) := \mathcal{L}_{r}  ( x, u,p ) + \mathcal{L}^{r}  ( x, u,p ) 
\eeq
whereas $\mathcal{L}  ( x, u(x),Du(x) ) $ is simply denoted as $\cL u(x)$ as in \eqref{def:nonlocal_diffusion}. 
\vskip0.5em
By now we use the viscosity solution concept for 
\eqref{eqn:parabolic_HJB}, but we will later prove more regularity so that equation \eqref{eqn:parabolic_HJB} will hold pointwise.
Viscosity solutions for integro-differential parabolic equations are by now well established, we mostly refer to \cite{barles2008second}, \cite{JaKa05}, \cite{JaKa06}. As usual, $USC(Q_T)$ and $LSC(Q_T)$ denote upper semi-continuous, respectively lower semi-continuous, functions on $Q_T$.

\begin{definition}[Viscosity solutions] \label{def:visc_soln_parabolic}
    Assume \ref{nu'}.\smallskip\\ 
    \ (i) \ A 
  function $u \in  \text{USC} ( Q_T )\cap L^{\infty} ( Q_T)$  
    is a viscosity subsolution of \eqref{eqn:parabolic_HJB}, if for 
    any $0 < r \leq 1$,   $\phi \in C^{2} ( Q_T )$  
    and maximum point $(\hat{t},\hat{x})$ of $u - \phi$ in $(\frac{\hat{t}}{2}, \frac{3}{2} \hat{t} \wedge T) \times B_{r} ( \hat{x} )$,
    \begin{align*}
        \partial_t \phi ( \hat{t},\hat{x}) - \mathcal{L}_{r} ( \hat{x}, \phi, p ) 
        - \mathcal{L}^{r} ( \hat{x}, u, p ) 
        + b ( \hat{x} ) \cdott p  + H ( \hat{x},p ) \leq f ( \hat{t},\hat{x} ), \quad  p = D \phi ( \hat{t},\hat{x} ).
    \end{align*}\smallskip
    \noindent(ii) \ A 
   function   $u \in \text{LSC} ( Q_T ) \cap L^{\infty} (Q_T)$ 
    is a viscosity supersolution of \eqref{eqn:parabolic_HJB}, if for 
    any $0 < r \leq 1$,   $\phi \in C^{2} ( Q_T )
    $, 
    and minimum point $(\hat{t},\hat{x})$ of $u - \phi$ in $(\frac{\hat{t}}{2}, \frac{3}{2} \hat{t} \wedge T) \times B_{r} ( \hat{x} )$,
    \begin{align*}
        \partial_t \phi ( \hat{t},\hat{x}) - \mathcal{L}_{r} ( \hat{x}, \phi, p ) 
        - \mathcal{L}^{r} ( \hat{x}, u, p ) 
        +  b ( \hat{x} ) \cdott p  + H ( \hat{x},p ) \geq f ( \hat{t},\hat{x} ),\quad p = D \phi (\hat{t}, \hat{x} ).
    \end{align*}
   
    \noindent (iii) $u$ is a viscosity solution if it is both   (viscosity)  subsolution and
    supersolution.
\end{definition}

We have the following   well-posedness and comparison   results for \eqref{eqn:parabolic_HJB}. This is a fairly standard result, so we only sketch the proof.

\begin{thm} \label{thm:well_posedness_HJB_parabolic}
Assume \ref{nu'}, \ref{B}--\ref{B2}, \ref{H1} --\ref{H3}, $f \in C_b (Q_T)$,
$u_{0} \in W^{1,\infty}( \R^d )$.

\medskip
    \noindent (a)
    There exists a unique viscosity solution 
    $u \in C_b ( \bar{Q}_{T} )$
    of \eqref{eqn:parabolic_HJB}.
\medskip

\noindent (b)
Let
    $u_1 \in \text{USC} ( \bar{Q}_{T}   ) \cap L^{\infty} (Q_T)$ 
    and
    $u_2 \in \text{LSC} ( \bar{Q}_{T}   ) \cap L^{\infty} (Q_T)$ 
    be
    viscosity sub- and supersolutions of \eqref{eqn:parabolic_HJB}
    with $f_{i} \in C_b ( Q_{T})$ and $u_{0,i}\in W^{1,\infty}(\R^d)$ for $i=1,2$. If either $u_1$ or  $u_2$ belongs to $L^\infty(0,T;W^{1,\infty}(\R^d))$, 
    then for all $ ( x,t ) \in \bar{Q}_{T} $,
    \begin{align*}
        u_1 ( x,t ) - u_2 ( x,t ) \leq \| (u_{0,1} - u_{0,2})^+ \|_{C_{b}} + t \| ( f_{1} - f_{2} )^{+}  \|_{C_{b} ( Q_{T}  )}  
    \end{align*}


\end{thm}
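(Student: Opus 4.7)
The plan is to prove part (b) first, since uniqueness in (a) follows from comparison and existence is then obtained by Perron's method tailored to nonlocal problems, following \cite{JaKa05,JaKa06,barles2008second}.

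For (b), I would first reduce to the case of equal data. Setting $\tilde u_2(t,x) := u_2(t,x) + \|(u_{0,1}-u_{0,2})^+\|_\infty + t\,\|(f_1-f_2)^+\|_\infty$, a direct check shows that $\tilde u_2$ is a viscosity supersolution of the same equation satisfied by $u_1$ (with data $u_{0,1},f_1$), and $\tilde u_2(0,\cdot)\geq u_{0,1}$. It therefore suffices to show: if $u_1,u_2$ are bounded sub/supersolutions of the same equation with $u_1(0)\leq u_2(0)$, and one of them lies in $L^\infty(0,T;W^{1,\infty}(\R^d))$, then $u_1\leq u_2$ on $\bar Q_T$.

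For this I would argue by contradiction via doubling of variables. Suppose $\sup_{\bar Q_T}(u_1-u_2)>0$ and consider, for small $\epsilon,\delta,\eta>0$,
\[
\Phi_{\epsilon,\delta,\eta}(t,s,x,y) = u_1(t,x) - u_2(s,y) - \tfrac{|x-y|^2}{2\epsilon} - \tfrac{(t-s)^2}{2\epsilon} - \delta\bigl(\langle x\rangle+\langle y\rangle\bigr) - \eta(t+s).
\]
The uniform Lipschitz bound on (say) $u_2$ ensures that $u_1-u_2$ grows at most linearly in $x$, so the Lyapunov penalty $\delta\langle x\rangle$ localizes the maximum in a compact set. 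At an interior maximum $(\hat t,\hat s,\hat x,\hat y)$, I would apply the viscosity inequalities with the doubling test function, splitting $\mathcal L = \mathcal L_r + \mathcal L^r$ as in \eqref{notL}. The small-jump part $\mathcal L_r$, applied to the smooth test function, contributes an error $O(r^{2-\sigma}\epsilon^{-1})$ that is manageable because $\sigma>1$; the large-jump part $\mathcal L^r$ acts on $u_1,u_2$ directly and is controlled using the uniform Lipschitz/boundedness bound and $k<\sigma$. Subtracting the two inequalities, the Hamiltonian difference is absorbed by \ref{H2}, while the drift contribution $b(\hat x)\cdot\tfrac{\hat x-\hat y}{\epsilon} - b(\hat y)\cdot\tfrac{\hat x-\hat y}{\epsilon}$ is bounded by \ref{B} (one-sided Lipschitz) and the $\delta\,b(x)\cdot D\langle x\rangle$ term is non-positive at infinity by the confinement \eqref{consb1}. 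Sending $r,\epsilon\to 0$ and then $\eta,\delta\to 0$ contradicts the positivity of the supremum and the initial ordering, yielding $u_1\leq u_2$.

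For (a), uniqueness is immediate from (b). For existence, I would apply Perron's method in the nonlocal setting. Using $u_0\in W^{1,\infty}$, $f$ bounded, and the at-most-linear growth of $H(x,Du_0)+b(x)\cdot Du_0$ away from any fixed ball (controlled through \ref{H3}, \ref{B2} and the confining drift), one builds explicit Lipschitz sub- and supersolutions of the form $u_\pm(t,x)=u_0(x)\pm(C_1+C_2\langle x\rangle)(e^{Kt}-1)$ with $C_1,C_2,K$ large enough, or equivalently adapts the stochastic control representation via \eqref{dyncon}. Perron's construction then provides a (possibly discontinuous) viscosity solution sandwiched between $u_\pm$; the comparison principle forces it to be continuous, and preservation of the Lipschitz bound on $u_\pm$ at $t=0$ gives $u\in C_b(\bar Q_T)$.

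The main obstacle is the simultaneous presence of the unbounded drift $b$, the nonlocal operator $\mathcal L$, and the linear growth of $H$: none of the three can be treated in isolation in the doubling argument. The crucial ingredients making the argument go through are the Lyapunov penalty $\delta\langle x\rangle$ together with the confinement \ref{B}, the splitting $\mathcal L=\mathcal L_r+\mathcal L^r$ with $\sigma>1$, and the hypothesis that one of the two solutions is uniformly Lipschitz in $x$, which controls the tail contribution of $\mathcal L^r$ to the subtraction of the viscosity inequalities.
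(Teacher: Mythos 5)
Your general framework (doubling of variables, Lyapunov penalty, splitting $\mathcal L = \mathcal L_r + \mathcal L^r$, reduction to equal data, Perron or approximation for existence) is the right framework and close in spirit to what the paper (and the references it cites, e.g.\ \cite{nguyen2019large}) do. The paper's existence argument truncates $b$ to $b_R$ and passes to the limit with the uniform Lipschitz bound of Proposition~\ref{lipest0} and Arzel\`a--Ascoli, whereas you propose Perron's method; both routes are standard and acceptable (although your barriers $u_0(x)\pm(C_1+C_2\langle x\rangle)(e^{Kt}-1)$ are needlessly complicated: since $u_0$ is bounded and $H(\cdot,0)$, $f$ are bounded, the $x$-constant barriers $\pm\|u_0\|_\infty \pm C_0 t$ already work, and they avoid the issue that $\mathcal L u_0$ is not classically defined for a merely Lipschitz $u_0$).

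The genuine gap is in part (b), specifically in where the hypothesis ``$u_1$ or $u_2 \in L^\infty(0,T;W^{1,\infty})$'' is actually needed. You attribute it to controlling the tail $\mathcal L^r$ of the nonlocal operator, but that is not the issue: since $u_1,u_2\in L^\infty(Q_T)$, the doubling inequality
$\bigl[u_1(\hat x+z)-u_1(\hat x)\bigr]-\bigl[u_2(\hat y+z)-u_2(\hat y)\bigr]\leq \delta\bigl(\langle\hat x+z\rangle-\langle\hat x\rangle\bigr)+\delta\bigl(\langle\hat y+z\rangle-\langle\hat y\rangle\bigr)\leq 2\delta|z|$
together with $\sigma>1$ (so $\int_{|z|>r}|z|\,\nu(dz)<\infty$) already bounds the $\mathcal L^r$ contribution by $O(\delta)$ with no Lipschitz regularity of $u_1,u_2$. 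The true role of the Lipschitz hypothesis is to control $|\hat x-\hat y|$ at the maximum: if, say, $u_2$ is $L$-Lipschitz in $x$, then comparing $\Phi(\hat t,\hat s,\hat x,\hat y)\geq\Phi(\hat t,\hat s,\hat x,\hat x)$ yields $\tfrac{|\hat x-\hat y|^2}{2\epsilon}\leq (L+\delta)|\hat x-\hat y|$, hence $|\hat x-\hat y|\leq 2(L+\delta)\epsilon$. Without this, the drift term from \ref{B},
$(b(\hat x)-b(\hat y))\cdot\tfrac{\hat x-\hat y}{\epsilon}\geq \tfrac{\alpha|\hat x-\hat y|^2-\beta|\hat x-\hat y|}{\epsilon}$,
contains $-\beta|\hat x-\hat y|/\epsilon$, which the standard doubling estimate $|\hat x-\hat y|^2/\epsilon\to 0$ does \emph{not} control (it only gives $|\hat x-\hat y|/\epsilon\lesssim \epsilon^{-1/2}$). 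The same quantity $|\hat x-\hat y|/\epsilon$ appears in the $x$-Lipschitz estimate of $H$ from \ref{H3}, where $C(1+|p|)|\hat x-\hat y|$ with $|p|\approx|\hat x-\hat y|/\epsilon$ again requires $|\hat x-\hat y|\leq C\epsilon$. Also, your claim that ``the uniform Lipschitz bound ensures $u_1-u_2$ grows at most linearly, so the penalty $\delta\langle x\rangle$ localizes'' is incoherent: $u_1-u_2$ is already bounded since both functions are in $L^\infty(Q_T)$, and a linearly growing penalty would not localize a linearly growing function anyway. As written, your argument would not close the drift estimate, so the comparison proof has a hole.
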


\begin{proof}
\noindent (a) The proof is close to the proof of \cite[Theorem 2.1]{nguyen2019large}, where 
the corresponding stationary HJB equation (with exponential growth assumptions on the data) is considered. The idea is to truncate the   data   
to get a problem covered by standard results \cite{JaKa05,JaKa06, barles2008second}, and then use the uniform Lipschitz estimate in  Proposition \ref{lipest0} below   and the Arzela-Ascoli theorem to pass to the limit. By stability of viscosity solutions \cite{JaKa05,barles2008second}, the limit will be a viscosity solution. Since our proof would be fairly similar, we omit it. Moreover, since any solution is Lipschitz by  Proposition \ref{lipest0}, it will be unique by part (b). 
\medskip

\noindent (b) This is a standard variant of the comparison theorem in the viscosity theory for nonlocal equations, see e.g. \cite{JaKa06,JaKa05,barles2008second}. \nc We refer also to \cite[Proposition 1]{nguyen2019large}, where they give a detailed proof for stationary HJB equations and a sketch for the parabolic case,  both in our setting except for more growth on the data.  Only small modifications are needed, mainly due to a different penalization of infinity, which is given for us using the Lyapunov function $\langle x\rangle^k$ (cf. Lemma \ref{lemma:supersolution_property}). 
\end{proof}

A key result in our paper is the following global (in time) Lipschitz estimate.

\begin{proposition}[Global Lipschitz bound]\label{lipest0} Assume \ref{nu'}, \ref{B}--\ref{B2}, \ref{H1}, $f \in C_b (Q_T)$, and  let $u \in  C_b ( \overline Q_{T} )$  
be  a viscosity solution of \eqref{eqn:parabolic_HJB}. If $u_0 \in W^{1,\infty}(\R^d)$, then  
there is $C>0$ such that
\begin{align*}
        | u  ( t,x ) - u  ( t,y ) | 
        \leq C | x-y | \qquad\text{for}\,\,\, x,y \in \R^{d},\ t \in  [ 0,T ),
\end{align*}
where $C$ depends on $d, \sigma, \alpha, \beta, |b(0)|, C_H$ and on $[u_0]_1$ and $\sup_{[0,T]}[f(t)]_0$.
\end{proposition}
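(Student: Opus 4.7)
The plan is a doubling-of-variables argument in the viscosity framework, tailored to the nonlocal, unbounded-domain setting via a Lyapunov-type penalization. For a candidate Lipschitz constant $L>0$, to be optimized, I would introduce
\[
\Psi(t,x,y) = u(t,x) - u(t,y) - L|x-y| - \eta\bigl(\langle x\rangle^k+\langle y\rangle^k\bigr) - \mu t,
\]
with small $\eta,\mu>0$ and $k\in(0,\sigma)$. The Lyapunov weight $\langle\cdot\rangle^k$ is coercive and, thanks to \ref{B} and $k<\sigma$, compatible with $\mathcal L$ and the drift (cf.\ Lemma~\ref{lemma:supersolution_property}), which guarantees that the supremum of $\Psi$ is attained at some $(\hat t,\hat x,\hat y)$. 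If $\hat t=0$, then $\Psi\le([u_0]_1-L)|\hat x-\hat y|\le 0$ as soon as $L\ge[u_0]_1$; if $\hat x=\hat y$, then $\Psi\le 0$ trivially. Otherwise, I would invoke the viscosity subsolution property at $(\hat t,\hat x)$ with $\phi_1(t,x)=L|x-\hat y|+\eta\langle x\rangle^k+\mu t$ and the supersolution property at $(\hat t,\hat y)$ with $\phi_2(t,y)=-L|\hat x-y|-\eta\langle y\rangle^k-\mu t$, using the max-point property of $\Psi$ to upgrade the ``large jump'' parts of $\mathcal L$ acting on $u$ to the same parts acting on the test functions.

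Subtracting the two resulting inequalities and estimating each contribution separately, the time derivatives yield $2\mu$; by \ref{B} the drift gives $L(\alpha|\hat x-\hat y|-\beta)+O(\eta)$; by translation invariance of $\mathcal L$ and \ref{nu'}, the nonlocal operator applied to $L|x-\hat y|+L|\hat x-y|$ produces a positive term of order $c_0 L|\hat x-\hat y|^{1-\sigma}$ with $c_0=c_0(d,\sigma,\nu)>0$, which is finite and divergent as $|\hat x-\hat y|\to 0$ precisely because $\sigma>1$; the Lyapunov contributions are uniformly controlled via Lemma~\ref{lemma:supersolution_property}; the Hamiltonian, using only \ref{H1} and $|p_i|\le L+O(\eta)$, contributes at worst $-2C_H(1+L)-O(\eta)$; and the oscillation bound yields $f(\hat t,\hat x)-f(\hat t,\hat y)\le\sup_{[0,T]}[f(t)]_0$. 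Collecting,
\[
L\bigl(c_0 s^{1-\sigma}+\alpha s-\beta-2C_H\bigr) \le \sup_{[0,T]}[f(t)]_0 + 2C_H + O(\eta+\mu), \qquad s:=|\hat x-\hat y|>0.
\]

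Because $s\mapsto c_0 s^{1-\sigma}+\alpha s$ blows up at $s=0$ (thanks to $\sigma>1$) and grows linearly at infinity, it has a strictly positive infimum $C_\ast=C_\ast(c_0,\alpha,\sigma)$ on $(0,\infty)$, and the right-hand side above is bounded independently of $L$. The main obstacle is to guarantee that $C_\ast$ dominates $\beta+2C_H$ so that the coefficient of $L$ on the left is strictly positive; when this fails for the bare test $L|x-y|$, I would refine the profile to $L\zeta(|x-y|)$ with $\zeta$ smooth, $\zeta'(0)=1$, and locally strictly concave near the origin, which enhances the positive nonlocal contribution without spoiling the drift. Once the coefficient of $L$ on the left is strictly positive, choosing $L$ larger than a threshold $L_\ast$ depending only on $d,\sigma,\alpha,\beta,|b(0)|,C_H,[u_0]_1$ and $\sup_{[0,T]}[f(t)]_0$ forces a contradiction, so $\Psi\le 0$. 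Passing $\mu,\eta\to 0$ and swapping $x,y$ then yields the announced uniform Lipschitz estimate on $[0,T)$.
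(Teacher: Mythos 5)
There is a sign error at the heart of your nonlocal estimate. The function $\phi_1(x)=L|x-\hat y|+\cdots$ is convex in $x$, so indeed $\mathcal L_r(\hat x,\phi_1,p)\ge 0$, but in Definition~\ref{def:visc_soln_parabolic} this term enters the \emph{sub}solution inequality with a minus sign, so it is $-\mathcal L_r(\hat x,\phi_1,p)\le 0$ that appears; dually, $\phi_2$ is concave and $\mathcal L_r(\hat y,\phi_2,q)\le 0$. After subtracting, both small-jump contributions sit on the left-hand side with the wrong (negative) sign. As for the large jumps $\mathcal L^r$: using the maximum-point inequality with $z'=z$, the linear part $L|x-y|$ cancels identically because $|\hat x+z-\hat y-z|=|\hat x-\hat y|$, and using $z'=Az$ with $A=I-2\,\widehat{\hat x-\hat y}\otimes\widehat{\hat x-\hat y}$ only produces a first-order-in-$z$ term that integrates to zero against the principal-value correction. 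In short, with a linear profile the nonlocal diffusion contributes \emph{nothing} coercive, not a $c_0 L s^{1-\sigma}$ term; your display degenerates to $L(\alpha s-\beta-2C_H)\le \text{bounded}$, which gives no control for $s$ small, and the positive infimum $C_\ast$ you invoke does not exist.

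The mechanism that actually carries the small-distance regime is the one you relegate to a fallback: strict concavity of $\psi$. Testing with $z'=Az$ and Taylor-expanding $\psi$, the coercive term is proportional to $-\psi''>0$ times $\int_{B_\delta}|\widehat{\hat x-\hat y}\cdot z|^2|z|^{-d-\sigma}\,dz$; this is exactly Lemma~\ref{Estimates_non_local_operator}, and it vanishes identically when $\psi$ is linear. So the concave modification is not a refinement but the whole point. Accordingly, the paper's proof of Proposition~\ref{lipest} proceeds in two steps: first a bound $|u(t,x)-u(t,y)|\le K(1+|x-y|)$ from drift coercivity (effective only for $|x-y|$ bounded away from zero) combined with a concave correction $C_1(1-e^{-C_2 r^\theta})$, then a second doubling on $|x-y|<r_0$ with $\psi(r)=C_1(r-\rho r^{1+\theta})$, $0<\theta<\sigma-1$, where $\psi''<0$ extracts the genuine Lipschitz constant. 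A minor additional issue: the penalization $-\mu t$ in $\Psi$ is bounded and does not prevent the supremum from being attained at $t=T$, where no viscosity inequality is available; one should use a term like $-\varepsilon/(T-t)$ that blows up there.
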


We stress that the constant $C$ above does not depend on $T$ other than through the global bound of $[f(t)]_0$ in $(0,T)$. This result is proved in Appendix \ref{app:Lipschitz}, see  Proposition \ref{lipest}. In order to prove that solutions are classical, we also need a local version of this Lipschitz result.

\begin{lemma}[Local Lipschitz bound]\label{thm:loc_lip_R}
Assume \ref{nu'}, \ref{H1}, $f\in C_b(Q_T)$,  $u_0\in W^{1,\infty}_{\textup{loc}}(\rd)$, and let $u\in  C_b ( \overline Q_{T} )$ be  a viscosity solution of \eqref{eqn:parabolic_HJB}. Then for $r>0$, there exists $C_r\geq 0$ such that
\beq\label{lipB2}
       \sup_{t\in(0,T)} | u  ( t,x ) - u  ( t,y ) | \leq C_r(1+\omega_{r,T}(u)) | x-y | \qquad \hbox{for} \,\,\, x,y \in B_r:=\{|x|\leq r\},
\eeq
where $C_r$ depends on $d,\sigma, \|b\|_{L^\infty(B_{r+2})}, C_H, Lip_{B_{r+2}}(u_0)$, and the oscillation 
\begin{align}
\label{w_r}
\omega_{r,T}(u):= \sup_{\scriptsize\begin{array}{c}s\in[0,T],x,y \in  B_{r+2},\\|x-y|<2 \end{array}}\!\!\!\!  |u  (s,x) - u  (s,y)|\,.
\end{align}
\end{lemma}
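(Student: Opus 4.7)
The result is obtained by the doubling variables method, adapted to the nonlocal setting and combined with a spatial cutoff to handle the fact that $u_0$ is only locally Lipschitz and $u$ is only locally controlled through the oscillation $\omega_{r,T}(u)$.

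First, for $x,y\in B_r$ with $|x-y|\geq 1$, the estimate follows from a chaining argument: by convexity of $B_r$, one can connect $x$ and $y$ by $N\lesssim |x-y|+1$ intermediate points in $B_r$ with consecutive distances strictly less than $2$; applying the definition of $\omega_{r,T}(u)$ to each pair gives $|u(t,x)-u(t,y)|\leq N\,\omega_{r,T}(u)\leq C\,|x-y|\,\omega_{r,T}(u)$ with a purely geometric constant. Hence it suffices to prove the estimate for $|x-y|<1$.

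For constants $L,K>0$ to be chosen, fix a smooth $\rho\in C^2(\rd)$ with $\rho\equiv 0$ on $B_r$ and $\rho(x)\to+\infty$ fast enough as $|x|\to r+1$, and consider
\[
\Phi(t,x,y)=u(t,x)-u(t,y)-L|x-y|-K[\rho(x)+\rho(y)],\qquad (t,x,y)\in[0,T]\times\rd\times\rd.
\]
Choosing $K$ large (depending on $\|u\|_\infty$) guarantees that any positive supremum of $\Phi$ is attained in $[0,T]\times B_{r+1}\times B_{r+1}$. Argue by contradiction: assume $\sup\Phi>0$ for an $L$ to be specified, and let $(\hat t,\hat x,\hat y)$ be a maximizer, necessarily with $\hat x\ne\hat y$. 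If $\hat t=0$, the bound $L\geq Lip_{B_{r+2}}(u_0)$ yields an immediate contradiction, so $\hat t>0$.

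At the interior maximum, combining the viscosity subsolution inequality for $u$ at $\hat x$ and the supersolution inequality at $\hat y$ (via Jensen--Ishii in parabolic nonlocal form) produces, after subtraction, an inequality controlling the drift, Hamiltonian and nonlocal contributions by $2\|f\|_\infty$. We estimate the nonlocal piece by splitting the jump integrals at a scale $\delta\in(0,1)$. For $|z|<\delta$, the convexity of $z\mapsto L|z|$ makes the contribution of $\mathcal{L}_\delta$ applied to the test function nonnegative, and the $K\rho$-part contributes only $O(K\delta^{2-\sigma})$ by \ref{nu'}. For $|z|\geq\delta$, the shift-invariance of $|x-y|$ under $(x,y)\mapsto(x+z,y+z)$ combined with the maximality of $\Phi$ gives
\[
[u(\hat t,\hat x+z)-u(\hat t,\hat y+z)]-[u(\hat t,\hat x)-u(\hat t,\hat y)]\leq K[\rho(\hat x+z)+\rho(\hat y+z)-\rho(\hat x)-\rho(\hat y)],
\]
while the remaining one-sided increments $u(\hat t,\hat x+z)-u(\hat t,\hat x)$ are bounded by $\omega_{r,T}(u)$ when $\hat x+z\in B_{r+2}$ (i.e.\ $|z|\leq 1$) and by $2\|u\|_\infty$ otherwise; both are integrable against $\nu$ on $\{|z|\geq\delta\}$ because $\nu$ is finite at infinity. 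Together with the bound on $\|b\|_{L^\infty(B_{r+2})}$ and \ref{H1}--\ref{H2}, choosing $L=C_r(1+\omega_{r,T}(u))$ with $C_r$ sufficiently large then yields the required contradiction; the symmetric estimate for $u(t,y)-u(t,x)$ completes the proof.

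\textbf{Main obstacle.} The technical heart is controlling the integral operator $\mathcal{L}$ in the doubling argument. Small jumps are handled by the convexity of $|x-y|$, which yields a favorable sign. Large jumps, which break locality, are the real difficulty: they are overcome through the shift-invariance of $L|x-y|$ under simultaneous translation of $x$ and $y$, so that the maximality of $\Phi$ produces the needed comparison between $u(\hat t,\hat x+z)-u(\hat t,\hat y+z)$ and $u(\hat t,\hat x)-u(\hat t,\hat y)$; it is precisely this shift-invariance that allows the final constant to involve the local oscillation $\omega_{r,T}(u)$ rather than a global norm of $u$.
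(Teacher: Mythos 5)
There is a genuine gap in the coercivity step. At the candidate maximum $(\hat t,\hat x,\hat y)$, subtracting the sub- and supersolution inequalities produces first-order contributions from the drift and Hamiltonian of size $O(L)$: with $p=L\,\widehat{\hat x-\hat y}+KD\rho(\hat x)$ and $q=L\,\widehat{\hat x-\hat y}-KD\rho(\hat y)$, the drift gives $L(b(\hat x)-b(\hat y))\cdot\widehat{\hat x-\hat y}+O(K)$, which under the sole assumption $b\in L^\infty(B_{r+2})$ is only bounded by $2L\|b\|_{L^\infty(B_{r+1})}$ with no favorable sign, and by \ref{H1} the Hamiltonian is only bounded below by $-2C_H(1+L)-O(K)$. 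With the \emph{linear} penalty $L|x-y|$, the small-jump part of the nonlocal operator contributes nothing useful: by convexity of $|\cdot|$ it has a good sign (it can be dropped), but its magnitude is $O(K\delta^{2-\sigma})$, which does not scale with $L$ and therefore cannot absorb the $O(L)$ terms above. Sending $L\to\infty$ makes the bad terms grow while nothing on the favorable side grows with $L$, so choosing $L=C_r(1+\omega_{r,T}(u))$ large does \emph{not} yield a contradiction; the argument as written never closes.

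The mechanism used in the paper is precisely what is missing here. Theorem \ref{thm:parabolic_local_Lipschitz_cutoff} (from which Lemma \ref{thm:loc_lip_R} is deduced by absorbing $b\cdot p$ into $\hat H$) uses a \emph{strictly concave} penalty $\psi(r)=r-\rho r^{1+\theta}$ in the doubling variables, together with Lemma \ref{Estimates_non_local_operator}. Because $\psi''<0$, the refined nonlocal estimate at a doubling maximum produces, from the lower bound on $\nu$ in \ref{nu'}, a coercive term of order $-K\Lambda\delta^{\theta+1-\sigma}$ (with $\theta<\sigma-1$, so the exponent is negative and this blows up as $\delta\to0$). Since this term scales linearly with the Lipschitz penalty coefficient, it dominates the Hamiltonian/drift contributions $-O(K)$ after $\delta$ is fixed small enough, and this is what allows the final choice of $K$ (your $L$) to produce the contradiction. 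A linear penalty has $\psi''\equiv0$ and kills this mechanism. Note also a secondary issue: your cutoff $\rho$ blows up on $\partial B_{r+1}$ and is not defined on all of $\R^d$, so the nonlocal terms $\mathcal L_\delta(\hat x,\phi^x,\cdot)$ and $\mathcal L^\delta(\hat x,u,\cdot)$ are not well-defined and the viscosity framework does not apply; the paper instead uses a globally defined polynomial-growth Lyapunov function $\phi(x-x_0)=\langle x-x_0\rangle^\beta-1$ with $\beta<\sigma$.
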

The result is a corollary of Theorem \ref{thm:parabolic_local_Lipschitz_cutoff}\footnote{Just apply   Theorem \ref{thm:parabolic_local_Lipschitz_cutoff} to $\hat H(x,p):=H(x,p)+b(x)p$ and $B=B_r$.} in Appendix \ref{app:Lipschitz}. This local regularity result will be exploited to show that solutions belong to $W^{2,\infty}_{\textup{loc}}(\rd)$ in space and hence are classical. This is the main outcome of this subsection.

\begin{thm} \label{thm:uniform_second_derivative_HJB} Assume $r > 0$,  
    \ref{nu'}, \ref{B}--\ref{B2}, \ref{H1}--\ref{H3}, \ref{H6},   $f \in  C_b (Q_T)\cap L^\infty(0,T;W^{1,\infty}(\R^d))$, $u_{0} \in W^{2,\infty}(\R^d)$, 
    and let $u$ be the viscosity solution of \eqref{eqn:parabolic_HJB}.
\smallskip

\noindent (a) There is $C_r>0$, independent of $T$, such that for $t, t+s \in (0, T)$, $x \in B_r$, and $|s|,|y|\leq1$, 
\begin{align*}
  \frac{|u(t+s,x)-u(t,x)|}{|s|} + \frac{|u (t,x+y) - 2 u (t,x) + u (t,x-y) |}{|y|^2} \leq C_{r}.
\end{align*}

\noindent (b) $\partial_t u,\mathcal Lu\in C(Q_T)$ and $u$ is a classical solution of \eqref{eqn:parabolic_HJB}.
\end{thm}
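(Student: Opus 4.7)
The plan is to prove the spatial second-difference bound of (a) first, from which the time-Lipschitz bound and then the classical regularity of (b) will follow.

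For the spatial bound, I would use a doubling/tripling-variables argument in the spirit of Ishii--Lions estimates for nonlocal HJB equations. Fix $r>0$ and work with three independent spatial variables $x_1,x_2,x_3\in B_{r+2}$, considering on $(0,T)\times B_{r+2}^3$
$$\Phi(t,x_1,x_2,x_3) := u(t,x_1) - 2u(t,x_2) + u(t,x_3) - M|x_1-x_3|^2 - \varepsilon^{-1}|x_1+x_3-2x_2|^2 - \chi,$$
where $\chi$ is a smooth localizing penalization in the $x_i$'s, $\varepsilon>0$ is sent to $0$ so that $x_1+x_3\approx 2x_2$, and $M>0$ is a constant to be chosen large. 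Suppose $\Phi$ attains a positive maximum at $(\hat t,\hat x_1,\hat x_2,\hat x_3)$; the goal is a contradiction. Applying the viscosity subsolution property of $u$ at $(\hat t,\hat x_1)$ and $(\hat t,\hat x_3)$, the viscosity supersolution property at $(\hat t,\hat x_2)$, and adding the three resulting inequalities, three ingredients come into play: (i) the singular nonlocal part gives a nonpositive contribution at the maximum (since $\mathcal{L}_r$ applied to the second-difference in $x$ is nonpositive at a maximum); (ii) the uniform convexity of $H$ in $p$ from \ref{H5}, applicable on the bounded range of $|Du|$ furnished by the $T$-independent global Lipschitz bound of Proposition~\ref{lipest0}, produces a favorable term of the form $\alpha_K|Du(\hat x_1)-Du(\hat x_3)|^2$; (iii) the $x$-regularity assumptions \ref{H3}, \ref{H6} on $H$, the local Lipschitz assumption \ref{B2} on $b$ (active only in $B_{r+2}$), and the spatial $W^{1,\infty}$-bound on $f$ each contribute $O(|\hat x_1-\hat x_3|^2)$ error terms, absorbable by $M|\hat x_1-\hat x_3|^2$ once $M$ is sufficiently large. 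Passing to the limit $\varepsilon\to 0$ yields $\Phi\le 0$, giving the second-difference bound $|u(t,x+y)-2u(t,x)+u(t,x-y)|\le C_r|y|^2$ on $B_r\times B_1$, uniformly in $t\in (0,T)$ and in $T$.

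With the spatial bound in hand, $\mathcal{L}u(t,x)$ is pointwise finite and locally bounded: the singular piece $\mathcal{L}_1 u$ is controlled by pairing the second-difference bound with $|z|^{-d-\sigma}$ on $|z|\le 1$ (integrable since $\sigma<2$), the far piece $\mathcal{L}^1 u$ by $\|u\|_{L^\infty}$. Hence $\partial_t u=\mathcal{L}u-H(x,Du)-b(x)\cdot Du+f$ is locally bounded in the viscosity sense. A comparison argument between $u(t+s,\cdot)$ and $u(t,\cdot)\pm C_r s$, carried out as viscosity sub/supersolutions on $B_r$ with a Lyapunov penalization at infinity of the form $\langle x\rangle^k$ to control the linear growth of $b$, then yields $|u(t+s,x)-u(t,x)|\le C_r|s|$ for $x\in B_r$, completing (a).

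For (b), the estimates of (a) give $u\in W^{1,\infty}_{\mathrm{loc}}$ in time and $u(t,\cdot)\in W^{2,\infty}_{\mathrm{loc}}(\R^d)$ in space, so the equation holds pointwise a.e. I would then read the equation as the linear nonlocal parabolic problem
$$\partial_t u - \mathcal{L}u = g(t,x),\qquad g := f - H(x,Du) - b(x)\cdot Du,$$
where $g$ is locally Lipschitz in $x$ (via the global Lipschitz bound on $Du$ from Proposition~\ref{lipest0}, together with \ref{H6} and \ref{B2}) and locally bounded in $t$, and apply interior Schauder-type regularity for linear nonlocal parabolic equations (cf.\ \cite{JaKa05,JaKa06,barles2008second}) to obtain $\partial_t u,\ \mathcal{L}u\in C(Q_T)$ with the equation holding pointwise. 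The main obstacle throughout is the three-point doubling argument of Step~1: one must control nonlocal second-order increments rather than two-point differences, while ensuring $C_r$ does not deteriorate as $T\to\infty$. The latter $T$-uniformity comes from the $T$-independent Lipschitz bound of Proposition~\ref{lipest0}, which in turn relies crucially on the confining drift \ref{B}.
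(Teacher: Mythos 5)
Your proposal relies on two ingredients that fail here.

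First, you invoke the uniform convexity of $H$ from \ref{H5} to get a ``favorable term'' $\alpha_K|Du(\hat x_1)-Du(\hat x_3)|^2$. But \ref{H5} is \emph{not} among the hypotheses of Theorem~\ref{thm:uniform_second_derivative_HJB}: the assumption list is \ref{nu'}, \ref{B}--\ref{B2}, \ref{H1}--\ref{H3}, \ref{H6}. Convexity is not available, and there is no compensating structure in the Hamiltonian to supply the coercivity your tripling-variables argument needs.

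Second, and more fundamentally, a direct Ishii--Lions-type tripling argument cannot produce a $C^{1,1}$ bound when the diffusion has order $\sigma\in(1,2)$. The claim in your step~(i) that the singular nonlocal part ``gives a nonpositive contribution'' is at best of size $O(|\hat x_1-\hat x_3|^\sigma)$ (that is the smoothing order of $\mathcal L$), which is \emph{weaker} than the $O(|\hat x_1-\hat x_3|^2)$ error terms it must absorb; the inequality goes the wrong way precisely because $\sigma<2$. For this reason the paper never attempts a second-order doubling: it proves the Lipschitz estimate (order $0,1$, for which $\sigma>1$ suffices, via Lemma~\ref{Estimates_non_local_operator}), then \emph{differentiates} the truncated equation \eqref{eqn:parabolic_HJB_cutoff}, obtaining that $w_R=\partial_{x_i}u_R$ solves \eqref{eqn:differentiated_HJB} with linear Hamiltonian $\tilde H(x,q)=H_p(x,Du_R)\cdot q$ satisfying \ref{H1} with constant $L_H$, plus a locally bounded source $\tilde f$. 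The local Lipschitz estimate of Lemma~\ref{thm:loc_lip_R} then applies to $w_R$, yielding $\|D^2u_R\|_{L^\infty(B_r)}\le C_r+o_R(1)$; passing $R\to\infty$ gives the second-difference bound. Each stage only ever asks for Lipschitz regularity from the nonlocal operator, which is why $\sigma>1$ is enough and no convexity is needed. The time-derivative bound in the paper's Step~3 is read directly off the equation once $\mathcal L u_R$ and $Du_R$ are controlled, rather than via a separate comparison argument in time. Your part~(b) reasoning (interpolation, Schauder for the frozen linear problem) is aligned with the paper's part~(b), but it depends on the second-difference bound that the proposed Step~1 does not deliver.
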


Let us point out that the regularity of $H$ can be  relaxed somewhat in the above result, but we will not pursue this further here. The proof of Theorem \ref{thm:uniform_second_derivative_HJB} is given in the next subsection.

\subsection{A truncated evolution problem and the proof of Theorem \ref{thm:uniform_second_derivative_HJB}.}
We will now truncate the drift $b$, in order to use known existence results of classical solutions for problems with bounded coefficients, which are proved e.g. in \cite{ersland2020classical}. Then we prove uniform $W^{2,\infty}_{\textup{loc}}$ bounds, and we pass to the limit to obtain Theorem \ref{thm:uniform_second_derivative_HJB}.   

For $R > 0$, we consider the truncated version of \eqref{eqn:parabolic_HJB},
%
\begin{align}
    \begin{cases}
        \partial_t u - \mathcal{L} u (x) +  b_{R} (x) \cdott Du + H ( x,Du )  = f ( t, x ) , &\text{in } ( 0,T ) \times \R^d,  \\
        u ( 0, \cdot ) = u_{0}, & \text{in } \R^d,  
    \end{cases}
    \label{eqn:parabolic_HJB_cutoff}
\end{align}
where $b_{R} ( x ) := b ( x ) \chi_{R} ( x )$ and $\chi_{R} ( x )$  is a cut-off function defined by
\beq\label{chir}
\chi_R(x)=\chi\left(\frac{|x|}R\right)\,,\quad\chi \in C_{c}^{\infty}(\R)\,,\qquad \dschi_{B_{1}}(r) \leq \chi  ( r ) \leq \dschi_{B_{2}} ( r ), \qquad \|\chi\|_{C^1_b}\leq 2.
\eeq
Note that $\text{supp}\, \chi_R = \{x: |x|\leq 2R\}$, $D\chi_R(x)=\frac1R\chi'(\frac{|x|}R)\frac x{|x|}$. 
Under our assumptions these equations have bounded and $x$-Lipschitz coefficients. 
 For 
 \eqref{eqn:parabolic_HJB_cutoff} 
 we have the following results on existence, uniqueness and regularity of solutions.

 \begin{proposition}
 \label{thm:HJB_well_posedness}
 Assume $R>0$, \ref{nu'}, \ref{B}--\ref{B2}, \ref{H1}--\ref{H3}, 
    $f \in C_b (Q_T)$, and $u_0 \in C_b (\rd)$. \medskip
%
    
    \noindent (a) 
    There exists a unique
    viscosity solution $u_R\in C_b(Q_T)$ of
  \eqref{eqn:parabolic_HJB_cutoff}. 
Moreover,
  $$\|u_R\|_\infty \leq \|u_0\|_\infty + C_0T\qquad \text{where}
  \qquad C_0 := \| H ( \cdot,0 )  \|_\infty + \| f  \|_\infty.$$

  \noindent (b) Assume in addition
 that $u_{0} \in W^{1,\infty}( \R^d )$.
  Then there exist $C>0$ (independent of $T,R$)  and $c_T>0$ (independent of $R$)    such that
\begin{align*}
 |  u_R ( t,x ) - u_R  ( t,y ) |   \leq C | x-y | + \frac{c_T}{\langle R \rangle^{\beta}} ( \langle x \rangle^{\beta} + \langle y \rangle^{\beta} )\quad \text{for} \,\, x ,y \in B_{R},\ t \in  [ 0,T ).
    \end{align*}
  
  \noindent (c) 
  Assume in addition \ref{H6}, $f \in  C_b (Q_T)\cap W^{1,\infty}(Q_T)$
  and $u_{0} \in W^{2,\infty}(\R^d)$.
    Then $u_R$  is a   classical solution with $\partial_t Du_{R}, D u_R  \in C (Q_T)$, and for any $r<R-2$ 
    there exists $C_{r}>0$ (independent of $R,T$)
    such that for all $x\in B_{r}$, $t \in  (0,T )$, 
    \begin{align}\label{d-bnds}
     | Du_R  ( t,x ) | +   | D^2 u_R  ( t,x ) |  + |\partial_tu_R  ( t,x )|
        \leq  C_{r} + o_R (1).
    \end{align}   
Moreover, the constant $C_r$ only depends on $f$ and $u_0$ through their $L^\infty(0,T;W^{1,\infty}(\R^d))$ and  $W^{2,\infty}(\R^d)$ norms respectively.  
 \end{proposition}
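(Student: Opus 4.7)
The truncated drift $b_R=b\chi_R$ is bounded, Lipschitz, and compactly supported, so \eqref{eqn:parabolic_HJB_cutoff} is within the classical nonlocal HJB theory of \cite{ersland2020classical,JaKa05,JaKa06,barles2008second}. Merging $\tilde H(x,p):=H(x,p)+b_R(x)\cdot p$, the combined Hamiltonian still satisfies \ref{H1}--\ref{H3} with $R$-dependent constants, so part \textbf{(a)} follows from Perron's method plus the standard comparison principle of \cite{JaKa05,JaKa06,barles2008second}. The $L^\infty$-bound is obtained by checking that $\pm(\|u_0\|_\infty+C_0 t)$ are sub/supersolutions, using $\tilde H(\cdot,0)=H(\cdot,0)$ and \ref{H1}, and invoking comparison.

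For part \textbf{(b)}, Proposition \ref{lipest0} is not directly applicable because $b_R$ loses the confining property \ref{B} outside $B_{2R}$. My plan is to replicate the doubling-variable argument leading to Proposition \ref{lipest0} (carried out in Appendix \ref{app:Lipschitz}), but with an extra Lyapunov penalization tailored to the truncation. Concretely, one examines the test function
\begin{equation*}
\Phi(t,x,y):=u_R(t,x)-u_R(t,y)-C\,|x-y|-\frac{c_T}{\langle R\rangle^\beta}\bigl(\langle x\rangle^\beta+\langle y\rangle^\beta\bigr),
\end{equation*}
with $\beta$ small enough that $\mathcal L(\langle\cdot\rangle^\beta)$ is integrable. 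The weighted term forces any positive maximum of $\Phi$ to be attained at a finite point, and inside $B_R$, where $b_R=b$, the confining condition \eqref{consb1} produces the cancellation needed to close the estimate, verbatim as in the proof of Proposition \ref{lipest0}. The discrepancy created outside $B_R$, where $b_R\equiv 0$, is absorbed by the weight provided $c_T$ is chosen large enough in terms of $T$ (through a Gronwall-type accumulation of the nonlocal error terms).

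For part \textbf{(c)}, under \ref{H6}, $u_0\in W^{2,\infty}$ and $f\in W^{1,\infty}$, existence of a classical solution with $\partial_t u_R,Du_R,\mathcal L u_R\in C(Q_T)$ follows from the regularity results of \cite{ersland2020classical}, as $b_R$ is bounded and Lipschitz. To obtain the uniform-in-$R$ local bound on $D^2 u_R$ I plan to apply Lemma \ref{thm:loc_lip_R} to the finite differences $\tau_h u_R(t,x):=u_R(t,x+h)-u_R(t,x)$: thanks to (b), \ref{H2} and \ref{H3}, these solve a linear nonlocal equation whose zero-order and source terms are bounded uniformly in $R$ on $B_{R-2}$, so Lemma \ref{thm:loc_lip_R} yields a uniform Lipschitz bound on $\tau_h u_R/|h|$, i.e.\ on second-order difference quotients and hence on $D^2 u_R$ on each $B_r$ with $r<R-2$. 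The bound on $\partial_t u_R$ is then read off the PDE, and the residual $o_R(1)$ in \eqref{d-bnds} reflects the nonlocal tail defect at distances $\gtrsim R-r$, controlled by the weighted correction of (b). \textbf{Main obstacle.} The delicate step is clearly (b): matching $\beta$ so that both the cutoff defect $(b-b_R)\cdot D\langle x\rangle^\beta$ and $\mathcal L(\langle\cdot\rangle^\beta)$ are absorbable is what yields the precise rate $\langle R\rangle^{-\beta}$ with a constant $C$ independent of $R$; once this is secured, (c) reduces to standard linearization of the nonlocal HJB equation combined with the already-developed local Lipschitz tool, exploiting the regularization given by $\sigma>1$.
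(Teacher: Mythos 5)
Your plan follows the paper's proof in all three parts: (a) comparison with the explicit super/subsolutions $\pm(\|u_0\|_\infty+C_0 t)$; (b) a weighted doubling function whose correction $c_T\langle R\rangle^{-\beta}(\langle x\rangle^\beta+\langle y\rangle^\beta)$ forces any positive maximum into $B_R\times B_R$, where $b_R=b$ so the argument of Proposition \ref{lipest0} applies verbatim; and (c) linearizing the equation (your difference quotients $\tau_h u_R$ playing the role of the paper's $w_R=\partial_{x_i}u_R$) together with Lemma \ref{thm:loc_lip_R} for the local $W^{2,\infty}$ bound, then reading $\partial_t u_R$ off the PDE. The only imprecision is the mention of a ``Gronwall-type accumulation of nonlocal error terms'' for $c_T$ in part (b): in the paper's Theorem \ref{thm:parabolic_Lipschitz_cutoff} the weight coefficient is simply $\epsilon_R=2(\|u_0\|_\infty+C_0T)\langle R\rangle^{-\beta}$, chosen so that $\Phi_R\leq 0$ as soon as $x$ or $y$ leaves $B_R$ because $\epsilon_R\langle R\rangle^{\beta}\geq 2\|u_R\|_\infty$, hence the $T$-dependence of $c_T$ is inherited directly from the $L^\infty$ bound of part (a) rather than from any accumulation of errors.
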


\begin{remark}
(i) Note that the bound on $\|u_R\|_\infty$ 
is independent of $R$ but not on $T$.
\smallskip

\noindent (ii) We will use part (b) to control the oscillation of $u_R$ and to obtain a global Lipschitz bound in the limit as $R\to\infty$.
\smallskip
%
\end{remark}

 \begin{proof}
 \noindent (a) Existence and uniqueness of $u_R$ have been proved in \cite[Theorem 5.1]{ersland2020classical}. The bound follows from the comparison principle   and the fact that $v^\pm ( t,x ) = \| u_{0}  \|_{\infty} \pm t C_{0} $ 
    are super- and subsolutions of \eqref{eqn:parabolic_HJB_cutoff}. The proof of the comparison principle is quite standard (see e.g. Theorem 3 in \cite{imbert2005non}).
  \medskip
  
    \noindent (b)  The result is proved in Appendix \ref{app:Lipschitz}, see Theorem \ref{thm:parabolic_Lipschitz_cutoff}.
 \medskip
  
    \noindent (c)  Let $ 0<r<R-2$. We first prove the estimates assuming that $u_R$ is smooth, as well as $b$ and $f$. We will actually bootstrap the estimates  in the following order.
\vskip0.4em

\noindent  Step 1 (First derivatives).  Using part (b), for $r+2<R$ we have 
\begin{align*}
    |u_R (t,x) - u_R (t,y)| \leq C |x-y| + o_R (1) \qquad\text{for}\,\,
x,y\in B_{r+2}, \ t\in(0,T),
\end{align*}
where $C>0$ is independent of $R$ and $T$.   It follows that $\omega_{r,T} (u_R) \leq 2C + o_R (1)$ (cf. \eqref{w_r}
), and then by Lemma \ref{thm:loc_lip_R} there is $C_{r} > 0$ independent of $T,R$ such that  
\begin{align*}
    |  Du_R ( t,x ) | 
        \leq C_{r} + o_R (1) \qquad\text{for}\,\,   x \in B_r ,\ t \in (0,T).
\end{align*}

\noindent Step 2 (Second derivatives). Differentiating \eqref{eqn:parabolic_HJB_cutoff} with respect to $\partial_{x_i}$ we find that $w_R:= \partial_{x_{i}} u_R$ is a solution of 
\begin{align}
    \partial_t w_R - \mathcal{L} w_R + b_{R} ( x ) \cdot Dw_R + \tilde{H} ( x,Dw_R ) = \tilde{f} ( t,x ),  
    \label{eqn:differentiated_HJB}
\end{align}
where
$\tilde{H} ( x,Dw_R ) =   H_{p} ( x,Du_R ) \cdot Dw_R $ 
and $\tilde{f} = -\partial_{x_{i} } b_R ( x ) \cdot Du_R - H_{x_{i}} ( x,Du_R ) + \partial_{x_{i}} f (t,x ) $. 
By   step 1, \ref{H2}, \ref{H3}, \ref{H6}, 
 \ref{B2}, and regularity of $f$,   we see that $\tilde H$ satisfies \ref{H1} with   $C_{\tilde H}=L_H$ and  
that $\tilde f\in C_b(Q_T)$ is such that  
$$
|\tilde f(x,t)|\leq C_r+o_R(1)\qquad\text{for}\,\, x\in B_r,\ t\in(0,T)
$$ 
for some $C_r>0$ independent of $R,T$.
 Hence   equation \eqref{eqn:differentiated_HJB} is a HJB equation satisfying the assumptions
of part (b) and this allows us to get  first a bound on $\omega_{r,T}(w_R)$ and then a local Lipschitz bound on $w_R$ using  Lemma \ref{thm:loc_lip_R}.  As in step 1, we therefore find that  there is $C_{r} > 0$ (independent of $T,R$) such that
\begin{align*}
    |  \partial_{x_i}Du_R ( t,x ) |=|  Dw_R ( t,x ) | 
        \leq C_{r} + o_R (1) \qquad \text{for}\,\, x \in B_r,\ t \in (0,T).
\end{align*} 

\noindent Step 3 (Time derivative).  By  definition of $\mathcal{L}$ in \eqref{def:nonlocal_diffusion}, using \ref{nu'} and the estimates on $u_R$ and $Du_R$ obtained above,  for $x\in B_r$ we have 
\begin{align*}|
\mathcal{L} u_R (t,x)|\leq \ & \frac12\sup_{y\in x+B_{1}}|D^2u_R(t,y)|\int_{|z|<1}|z|^2\;\nu(dz)\\
& +2\sup_{y\in x+B_{\ln R-|x|}}|Du_R(t,y)|\int_{1<|z|<\ln R-|x|}|z|\;\nu(dz) \\
&+ 2\int_{|z|>\ln R-|x|}(|u_R(t,x+z)|+|u_R(t,x)|) \;\nu(dz)\\
\leq & \ c\|D^2u_R(t,\cdot)\|_{\infty, B_{r+1}} + c\|Du_R(t,\cdot)\|_{\infty, B_{\ln R}} +  2 
  \|u_R\|_\infty
\int_{B_{\ln R}^C} 
\nu(dz) 
\end{align*}
Now, on the right-side the 3rd term is bounded by the $L^\infty$ control on $u_R$ and \ref{nu'}, 
the 2nd term is bounded by $C+o_R(1)$ by Lemma \ref{thm:loc_lip_R}
and $\frac{\langle \ln R \rangle^{\beta}}{\langle R \rangle^{\beta- \gamma}} =o_R(1)$ (cf. Step 1), and the first term is bounded by $C_r+o_R(1)$ from Step 2. Therefore, overall we conclude that
$$
|\mathcal{L} u_R (t,x)|\leq C_r + o_R(1).
$$
Then by equation \eqref{eqn:parabolic_HJB_cutoff}, the definition of $b_R$, the assumptions on the data (including \ref{B}, \ref{B2}, \ref{H1}), and the bounds in Step 1 and 2, we get  
\begin{align*}
    |\partial_t u (t,x)| &\leq C_r (|\mathcal{L} u_R (t,x)| +  | Du_R(t,x)|+1) \leq \tilde C_r + o_R(1)  \quad \text{for}\,\, x \in B_r,\ t \in (0,T),
\end{align*}
where $C_r,\tilde C_r$ do not depend on $R,T$. 

This concludes with the estimate \eqref{d-bnds}, where we observe that   the constant $C_r$  only depends on $f$ and $u_0$ through their $L^\infty(0,T;W^{1,\infty}(\R^d))$ and  $W^{2,\infty}(\R^d)$ norms respectively. The reason is that, roughly speaking,  we  need to apply the Lipschitz estimate  of part (b) and Lemma \ref{thm:loc_lip_R}  after at most one spatial differentiation of the equation.  

Finally, to justify that we could differentiate the equation, we should build an extra smoothing approximation on top of \rife{eqn:parabolic_HJB_cutoff}. To this purpose, we may add $\varepsilon \Delta u$ to the nonlocal operator, and approximate $f,b, $ with smooth sequences $f_\vep, b_\vep$ (e.g. built by convolution). This procedure yields a  sequence of smooth solutions $u_\vep$ of a second order parabolic equation, where the equation is satisfied classically and can be differentiated. We just observe that both the Lipschitz bound (part (b)) and the conclusion of Lemma \ref{thm:loc_lip_R} remain unchanged  by addition of $\vep$ Laplacian, so the estimates obtained are uniform with respect to $\vep$. This means that $u_\vep$ is locally bounded in $C^{1,2}(Q_T)$, with $Du_\vep$ locally relatively compact in the uniform topology. Since $\partial_t u_\vep$ is also a  solution of a similar HJB equation (by differentiating in time the equation), one can also show that $\partial_t u_\vep$ is converging locally uniformly in $Q_T$. It follows that the unique solution $u_R$ of \rife{eqn:parabolic_HJB_cutoff} can be obtained as limit of $u_\vep$ and inherits the above estimates. In particular, we deduce that $u_R \in C^{1,1}(Q_T)$ and is a classical solution of the equation.
 \end{proof}

\begin{proof}[Proof of Theorem \ref{thm:uniform_second_derivative_HJB}]
    We first show that $u_R\to u$ locally uniformly. From Proposition  \ref{thm:HJB_well_posedness},  (a) and (b),  $u_R$ is bounded   in $Q_T$ \ee 
    uniformly in $R$, and is locally equi-Lipschitz. By  the Arzela-Ascoli theorem, $u_R$ converges locally uniformly along a subsequence to some limit $u^*$. Since $b_R (x) \to b (x)$ locally uniformly, stability of viscosity solutions under local uniform convergence 
then implies that $u^*$ is a viscosity solution of \eqref{eqn:parabolic_HJB}. 
Freezing $x,y$ and sending $R\to\infty$ in 
the (approximate) Lipschitz bound of Proposition  \ref{thm:HJB_well_posedness}   (b),   shows that $u^*$ is Lipschitz in $x$. Then by uniqueness of Lipschitz solutions of \eqref{eqn:parabolic_HJB} (Theorem \ref{thm:well_posedness_HJB_parabolic}), a standard argument shows that the whole sequence $u_R$ will converge (locally uniformly) to $u^*=u$, the unique viscosity solution of \eqref{eqn:parabolic_HJB}.

 For the rest of the proof we note that we may assume without loss of generality that $f \in W^{1,\infty}(Q_T)\cap L^\infty(0,T;W^{2,\infty}(\R^d))$ (or smoother if needed), and similarly for $u_0$. The general case follows by an approximation argument, $C_b(Q_T)$ compactness of approximate solutions, and passing to the limit in the corresponding pointwise bounds of Proposition \ref{thm:HJB_well_posedness} (c).  
\medskip

\noindent (a) Let $t \in (0, T)$, $x \in B(0,r)$, and $|y|\leq 1$. By   the fundamental theorem   and Proposition \ref{thm:HJB_well_posedness} (c) with $r+1$ instead of $r$, 
\begin{align*}
    | u_{R} (t, x+y ) &- 2 u_R (t, x ) + u_R (t, x-y ) | \\
    &= \bigg| \int_{0}^{1} Du_R (t, x + \tau y ) \cdott y \,d\tau - \int_{0}^{1} Du_R (t, x-\tau y ) \cdott y \,d\tau \bigg| \\
    &=   \bigg| \int_{0}^{1}\int_{0}^{1} y^T D^2u_R \big(t, x + \tau y -s (2\tau y)\big)  (2\tau y) \,ds \,d\tau \bigg|\\
    &\leq   \big(C_{r+1} +o_R (1)\big)| y |^{2} .   
\end{align*}
Sending $R \to \infty$, we then get that
\begin{align*}
     |u (t, x+y ) - 2 u (t, x ) + u (t, x-y ) | \leq C_{r+1} |y|^2,
\end{align*}
for a constant  $C_{r+1}>0$ depending on $r$ but not $T$. The time regularity estimate follows in a similar way. 
\medskip

\noindent (b) 
By interpolation between Lipschitz and $W^{2,\infty}_{\textup{loc}}$ functions, it follows that $\mathcal Lu$ is (locally) $\beta$-H\"older continuous in $x$ for $\beta\in(0,2-\sigma)$, cf.  Proposition 4.18 in \cite{ChJaKr21} for a similar result. This estimate is uniform in $t\in(0,T)$. Since $u\in C_b(Q_T)\cap L^\infty(0,T;W^{2,\infty}_{\textup{loc}})$, 
it follows that $\mathcal L^r u\in C(Q_T)$ and $\mathcal L_r u = O(r^{2-\sigma})$, and hence $\mathcal L u\in C(Q_T)$\footnote{To estimate $|\mathcal L u(x)-\mathcal L u(y)|$, use first $\mathcal L=\mathcal L_r +\mathcal L^r$ with $r$ small and then take $|x-y|$ small.}. Similarly, using a difference approximation, 
we can show that $Du\in C(Q_T)$.

To show that $\partial_t u \in C(Q_T)$, we first show that $u$ is differentiable for a.e. $(t,x)$ and satisfy equation \eqref{eqn:parabolic_HJB} in every point of differentiability. By the continuity of $\mathcal Lu, Du$, this means that $\partial_t u $ equals a continuous function a.e. and hence is continuous. 
This is (relatively) classical and explained in detail in  \cite[Theorem 4.23]{ChJaKr21} for a similar case. Here we just comment on some of the steps: (i) a.e. $t$-differentiability follows from the Rademacher theorem since $u$ is locally Lipschitz. (ii)  differentiability at a point implies that there is a test function $\phi$ such that at this point:  $u-\phi$ has a local strict max (or min),  $\partial_t\phi=\partial_t u $, and $D\phi=Du$.
This test function can be modified outside a neighborhood of the max (or min) point to be bounded and have global optimum.
The viscosity sub- and supersolution inequalities (see Definition \ref{def:visc_soln_parabolic}) then imply that equation \eqref{eqn:parabolic_HJB} holds pointwisely at this point after sending $r\to 0$.
\end{proof}

\subsection{The stationary HJB equation}

The stationary ergodic HJB equation is given by
\begin{align}
    \lambda - \mathcal{L} \bar{u}  (x) + H ( x,D \bar{u}  ) + b ( x ) \cdott D \bar{u} = f ( x ) \qquad \text{ in }  \R^d
    \label{eqn:ergodic_HJB}
\end{align}
where $\lambda \in \R$ is the so-called ergodic constant. We have existence and uniqueness (up to  addition of  constants) of viscosity solutions of this problem.

\begin{thm}\label{thm:well_posedness_HJB_ergodic}
 Assume \ref{nu'}, \ref{B}--\ref{B2}, \ref{H1},   
 and $f  \in W^{1,\infty} ( \R^d  )$. \smallskip
 
\noindent (a)
There exists a viscosity solution 
$ ( \lambda, \bar{u}  ) \in \R \times   C_b ( \R^d  )$ of \eqref{eqn:ergodic_HJB}. Moreover, there is $C>0$ independent of $\lambda$ such that 
 \begin{align*}
     |   \bar{u} ( x ) - \bar{u} ( y ) | \leq C | x-y |\qquad \text{for}\qquad x,y\in\R^d.    
\end{align*}

\noindent (b) Let
$ ( \lambda_{1}, u_{1} ), ( \lambda_{2}, u_{2} ) \in \R \times C_{b} ( \R^d  )$
be viscosity sub and supersolutions of \eqref{eqn:ergodic_HJB}. Then $\lambda_{1} \leq \lambda_{2}$, and if $\lambda_{1} = \lambda_{2}$, then $u_{1} - u_{2} = C$ in $\R^d $ for some constant $C \in \R$.
\end{thm}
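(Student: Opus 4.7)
The plan is to prove (a) by the classical vanishing-discount method and (b) by a doubling-of-variables comparison in which the confining drift \ref{B} together with the Lyapunov penalization $\langle x\rangle^k$ (cf.\ Lemma \ref{lemma:supersolution_property}) control the behavior at infinity. The Lipschitz regularity machinery of Section \ref{HJB} transfers essentially unchanged from the parabolic to the stationary setting and is the main technical input.

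For (a), I would introduce, for each $\delta>0$, the discounted stationary HJB equation
\begin{equation*}
\delta v_\delta -\mathcal L v_\delta + H(x,Dv_\delta) + b(x)\cdott Dv_\delta = f(x)\qquad\text{in }\R^d.
\end{equation*}
Existence of a unique bounded viscosity solution $v_\delta\in C_b(\R^d)$ follows by Perron's method together with the stationary version of the comparison principle in Theorem \ref{thm:well_posedness_HJB_parabolic}\,(b); comparison with the constants $\pm \delta^{-1}(\|H(\cdot,0)\|_\infty+\|f\|_\infty)$ yields the $\delta$-uniform bound $\|\delta v_\delta\|_\infty\le \|H(\cdot,0)\|_\infty+\|f\|_\infty$. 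The stationary analog of Proposition \ref{lipest0} (proved by the same argument as in Appendix \ref{app:Lipschitz}, which does not genuinely use the time derivative; cf.\ \cite{chasseigne2019priori,nguyen2019large}) then gives a Lipschitz bound $[v_\delta]_1\le C$ with $C$ independent of $\delta$. Setting $w_\delta:=v_\delta-v_\delta(0)$ produces a family which is uniformly Lipschitz with $w_\delta(0)=0$, while $\delta v_\delta(0)$ stays bounded in $\R$. Arzel\`a--Ascoli and the stability of nonlocal viscosity solutions under local uniform convergence (\cite{JaKa05,barles2008second}) provide, along a subsequence, a limit $(\lambda,\bar u)$ solving \eqref{eqn:ergodic_HJB}, and the Lipschitz bound passes to $\bar u$ with a constant independent of $\lambda$.

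For (b), to prove $\lambda_1\le\lambda_2$ I argue by contradiction: supposing $\lambda_1>\lambda_2$ and fixing $k\in(0,\sigma)$, I consider the doubled functional
\begin{equation*}
\Psi_{\varepsilon,\eta}(x,y)=u_1(x)-u_2(y)-\tfrac{1}{\varepsilon^2}|x-y|^2-\eta\bigl(\langle x\rangle^k+\langle y\rangle^k\bigr),
\end{equation*}
which attains a maximum at some $(x_{\varepsilon,\eta},y_{\varepsilon,\eta})$ since $u_i$ are bounded and $\langle\cdot\rangle^k$ is coercive. Subtracting the viscosity sub- and super-solution inequalities with the splitting $\mathcal L=\mathcal L_r+\mathcal L^r$ from Definition \ref{def:visc_soln_parabolic} (so that only the smooth penalization enters $\mathcal L_r$), and passing to the limit $\varepsilon\to 0$ by the standard Ishii--Jensen machinery for integro-PDEs (\cite{JaKa05,barles2008second}), one obtains at a single limit point $\bar x_\eta$ an inequality of the form
\begin{equation*}
\lambda_1-\lambda_2 \le -2\eta\, b(\bar x_\eta)\cdott D\langle\bar x_\eta\rangle^k + C\eta\bigl(|\mathcal L(\langle\cdot\rangle^k)(\bar x_\eta)| + 1 \bigr).
\end{equation*}
By Lemma \ref{lemma:supersolution_property} the nonlocal term is controlled by $C\langle\bar x_\eta\rangle^k$, while \ref{B} gives $b(\bar x)\cdott D\langle\bar x\rangle^k\ge c|\bar x|^k$ for $|\bar x|$ large, so the right-hand side becomes strictly negative unless $|\bar x_\eta|$ remains bounded uniformly in $\eta$. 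Sending $\eta\to 0$ then yields $\lambda_1\le\lambda_2$, a contradiction. When $\lambda_1=\lambda_2$, the same argument applied in both directions forces $\sup(u_1-u_2)=\inf(u_1-u_2)$, so $u_1-u_2$ is constant.

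I expect the principal obstacle to be the careful bookkeeping of the nonlocal test in the doubling step: the tail $\mathcal L^r u_i$ must be controlled uniformly as $\varepsilon\to 0$, and the supersolution property of $\langle\cdot\rangle^k$ against the full operator $\mathcal L+b\cdott D$ must be quantitatively strong enough to dominate the Hamiltonian-Lipschitz error $L_H|p_{\varepsilon,\eta}-q_{\varepsilon,\eta}|=O(\eta)$ coming from \ref{H2}. Both issues are handled by the uniform Lipschitz bound of part (a): it controls $\mathcal L^r u_i$ via boundedness of $u_i$ and keeps $|p_{\varepsilon,\eta}|,|q_{\varepsilon,\eta}|$ bounded independently of the penalization parameters, so that the standard nonlocal doubling machinery of \cite{JaKa05,barles2008second} applies.
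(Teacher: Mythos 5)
Your vanishing-discount construction for part (a) and your doubling-of-variables argument for $\lambda_1\le\lambda_2$ follow the same route as the paper, which defers both to \cite[Theorems 3.1 and 3.2]{nguyen2019large}: a $\delta$-uniform Lipschitz bound inherited from the parabolic Proposition \ref{lipest0}, Arzel\`a--Ascoli plus stability of viscosity solutions for existence, and a Lyapunov-penalized comparison for the ergodic constants. Your bookkeeping of the drift and Hamiltonian errors in the doubling step is correct (note that it is the local Lipschitz regularity \ref{B2}, not \ref{B} alone, that makes $\varepsilon^{-2}(b(\hat x)-b(\hat y))\cdott(\hat x-\hat y)$ negligible once the max points are trapped in a compact set by the $\eta\langle\cdot\rangle^k$ penalization).

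The last step of part (b) has a genuine gap. You claim that when $\lambda_1=\lambda_2$, ``the same argument applied in both directions forces $\sup(u_1-u_2)=\inf(u_1-u_2)$,'' but there is no ``other direction'' available: $u_1$ is only a subsolution and $u_2$ only a supersolution, so the pair $(u_2,u_1)$ is not a sub/super pair. Moreover, once $\lambda_1=\lambda_2$ your limiting inequality reduces to $0\le O(\eta)$ at the penalized maximum, which is vacuous and says nothing about the oscillation of $u_1-u_2$. The actual mechanism is a strong maximum principle that uses the nondegeneracy \ref{nu'} of $\nu$, not the Lyapunov structure: at a maximum $x_\eta$ of $w:=u_1-u_2-\eta\langle\cdot\rangle^k$, the max-point inequality $w(x_\eta+z)\le w(x_\eta)$ gives $\mathcal L(u_1-u_2)(x_\eta)\le\eta\,\mathcal L\langle\cdot\rangle^k(x_\eta)$, while subtracting the two viscosity inequalities and using \ref{H2} gives $\mathcal L(u_1-u_2)(x_\eta)\ge -C\eta$; combining and sending $\eta\to 0$ (along a subsequence with $x_\eta\to\bar x$, bounded by the penalization) forces
\begin{equation*}
\int_{\R^d}\bigl[(u_1-u_2)(\bar x+z)-(u_1-u_2)(\bar x)\bigr]\,\nu(dz)=0.
\end{equation*}
Since $(u_1-u_2)(\bar x)=\sup(u_1-u_2)$, the integrand is nonpositive, and because $d\nu/dz>0$ everywhere by \ref{nu'} and $u_1-u_2$ is continuous, this yields $u_1-u_2\equiv\sup(u_1-u_2)$. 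This nondegeneracy argument is distinct from, and in addition to, the doubling comparison you set up, and needs to be made explicit.
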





\begin{proof}   
    (a) We refer to \cite[Theorem 3.1]{nguyen2019large}, where
    the solution $( \lambda, \bar{u})$ is constructed as the limit of $(\delta u_\delta, u_\delta-u_\delta(0))$ as $\delta \to 0$, where $u_\delta$ is the solution of the 
    stationary HJB equation 
    \beq
    \delta u^{\delta} - \mathcal{L} u^{\delta} (x) + b ( x ) \cdott Du^{\delta}  + H ( x, Du^{\delta} )  = f ( x ), \quad   x \in  \R^d.
  \label{delta_eq}
\eeq
    The solution $\bar{u}$ then inherits the $\delta$-independent Lipschitz bound on $u^{\delta}$, which can be proved as in  the time-dependent case considered in   Proposition  \ref{lipest0} (cf. \ee Proposition \ref{lipest} in Appendix \ref{app:Lipschitz}). In fact, time-independent bounds in the parabolic case translate  into $\delta$-independent bounds for  the solution of \rife{delta_eq}.
    Another very similar proof, this time for the ergodic stationary case, is given in \cite[Theorem 2.1 (ii)]{chasseigne2019priori}. We therefore omit the details.
    \medskip
    
    \noindent (b) The proof is essentially the same as the proof of \cite[Theorem 3.2]{nguyen2019large}.
\end{proof}

Since the parabolic regularity results of Theorem \ref{thm:uniform_second_derivative_HJB} (a) are independent of $T$, it follows that the  stationary ergodic problem has classical solutions.
\begin{thm}\label{thm:ergodic_double_derivative} 
Assume $r > 0$,  
    \ref{nu'}, \ref{B}--\ref{B2}, \ref{H1}--\ref{H3}, \ref{H6}, $f \in   W^{1,\infty}(\R^d)$, and $(\lambda,\bar u)$ is a viscosity solution of \eqref{eqn:ergodic_HJB}.
\medskip
    
\noindent (a) There is a constant $C$ such that for $x,y \in \rd$,  
\begin{align*}
    |\bar u (x) - \bar u (y)| \leq C |x-y|.
\end{align*}

\noindent (b) There is a constant $C_r$  such that for $x \in B(0,r)$, and $|y|\leq1$, 
\begin{align*}
   \frac{|\bar u (t,x+y) - 2 \bar u (t,x) + \bar u (t,x-y) |}{|y|^2} \leq C_{r} .
\end{align*}

\noindent (c) $\mathcal L\bar u,D\bar u\in C(\rd)$, and $(\lambda,\bar u)$ is classical solution of  \eqref{eqn:ergodic_HJB}.
\end{thm}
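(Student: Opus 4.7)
\textbf{Proof plan for Theorem \ref{thm:ergodic_double_derivative}.} Part (a) is already contained in Theorem \ref{thm:well_posedness_HJB_ergodic} (a), so the content is (b) and (c). The plan is to mirror the strategy used for Theorem \ref{thm:uniform_second_derivative_HJB}, now in the stationary setting, exploiting crucially that the parabolic second-derivative estimates of Proposition \ref{thm:HJB_well_posedness} (c) are $T$-independent, and hence directly yield stationary bounds after passing to an appropriate limit.

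For (b), I will introduce the truncated stationary problem with the cutoff drift $b_R$ of \eqref{chir}:
\begin{equation*}
\lambda_R -\mathcal{L}\bar u_R + H(x,D\bar u_R) + b_R(x)\cdott D\bar u_R = f(x)\quad\text{in }\R^d.
\end{equation*}
Existence of a viscosity solution $(\lambda_R,\bar u_R)$ with $\bar u_R(0)=0$ follows as in Theorem \ref{thm:well_posedness_HJB_ergodic} (a) via the discount procedure $(\delta u^{\delta,R},u^{\delta,R}-u^{\delta,R}(0))\to(\lambda_R,\bar u_R)$, and the associated global Lipschitz estimate (the stationary counterpart of Proposition \ref{lipest0}, which is $\delta$- and $R$-independent) gives $[\bar u_R]_1 \le C$ and $|\lambda_R|\le C$ uniformly in $R$. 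To lift this to an $R$-uniform $W^{2,\infty}_{\textup{loc}}$ estimate I will reproduce the bootstrap of Proposition \ref{thm:HJB_well_posedness} (c) in the stationary setting: first justify classical regularity by adding $\vep\Delta$ and smoothing the data; then differentiate the equation in $x_i$ to see that $w_R:=\partial_{x_i}\bar u_R$ satisfies a stationary HJB
\begin{equation*}
-\mathcal L w_R + b_R\cdott Dw_R + H_p(x,D\bar u_R)\cdott Dw_R = -\partial_{x_i}b_R\cdott D\bar u_R - H_{x_i}(x,D\bar u_R)+\partial_{x_i}f,
\end{equation*}
whose source and new \lq\lq Hamiltonian\rq\rq\ are locally bounded and satisfy \ref{H1}--\ref{H3} uniformly in $R$ thanks to \ref{H6}, \ref{B2}, $f\in W^{1,\infty}$, and the uniform Lipschitz bound already obtained; finally apply Lemma \ref{thm:loc_lip_R} (in its stationary version, obtained just as in Appendix \ref{app:Lipschitz} by dropping the time variable) to conclude $|Dw_R|\le C_r$ on $B_r$ for $r<R-2$.

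The passage $R\to\infty$ then proceeds exactly as in the parabolic case: by Arzelà-Ascoli a subsequence of $(\lambda_R,\bar u_R)$ converges locally uniformly to some $(\tilde\lambda,\tilde u)\in\R\times C^{0,1}(\R^d)$, and since $b_R\to b$ locally uniformly, stability of viscosity solutions under local uniform convergence gives that $(\tilde\lambda,\tilde u)$ solves \eqref{eqn:ergodic_HJB}. By Theorem \ref{thm:well_posedness_HJB_ergodic} (b), $\tilde\lambda=\lambda$ and $\tilde u-\bar u$ is a constant; the normalization $\bar u_R(0)=0=\bar u(0)$ forces $\tilde u=\bar u$ and convergence of the full sequence. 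The $R$-uniform local $W^{2,\infty}$ bound then passes to $\bar u$, proving (b). Part (c) follows verbatim from the argument of Theorem \ref{thm:uniform_second_derivative_HJB} (b): interpolation between the Lipschitz and $W^{2,\infty}_{\textup{loc}}$ bounds gives local $\beta$-H\"older continuity of $\mathcal L\bar u$ for $\beta\in(0,2-\sigma)$, hence $\mathcal L\bar u, D\bar u\in C(\R^d)$, and a Rademacher plus test-function construction at every differentiability point turns the viscosity inequalities into pointwise equality.

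The main obstacle is the bootstrap step: one has to check that the right-hand side and the differentiated Hamiltonian satisfy all the structural hypotheses of the local Lipschitz estimate with constants independent of $R$, and to justify the formal $x_i$-differentiation by first building the $\vep$-regularized classical approximation as in Proposition \ref{thm:HJB_well_posedness} (c). Everything else reduces to arguments already in place in Sections \ref{HJB} and \ref{app:Lipschitz}, applied with the time derivative removed.
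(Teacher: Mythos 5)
Your proposal takes a genuinely different route from the paper, and unfortunately it has a gap at the very first step that is hard to repair.

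You want to introduce the truncated stationary ergodic problem with drift $b_R=b\,\chi_R$ and claim existence of $(\lambda_R,\bar u_R)$ together with an $R$-uniform global Lipschitz bound, ``as in Theorem \ref{thm:well_posedness_HJB_ergodic}~(a) via the discount procedure.'' But Theorem \ref{thm:well_posedness_HJB_ergodic} rests crucially on the confining hypothesis \ref{B}, which $b_R$ \emph{violates}: $b_R\equiv 0$ for $|x|\ge 2R$, so there is no confinement at infinity for fixed $R$. Concretely, the discount procedure needs a $\delta$-uniform Lipschitz bound on $u^{\delta,R}$, and the paper's own truncated Lipschitz estimate (Theorem \ref{thm:parabolic_Lipschitz_cutoff}) produces an error term $\frac{c}{\langle R\rangle^\beta}(\langle x\rangle^\beta+\langle y\rangle^\beta)$ whose prefactor scales like $\|u^{\delta,R}\|_\infty\sim 1/\delta$. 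With $R$ fixed this blows up as $\delta\to 0$, so you cannot conclude that $(\delta u^{\delta,R},\,u^{\delta,R}-u^{\delta,R}(0))$ converges, nor that $[\bar u_R]_1\le C$ uniformly in $R$. The ergodic constant $\lambda_R$ itself is not known to exist for the non-confining truncated problem. Everything downstream (the bootstrap, the Arzel\`a--Ascoli limit in $R$, identification of the limit with $\bar u$) depends on this missing first step.

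The paper sidesteps this entirely: it takes the \emph{parabolic} solution $u(t,\cdot)$ of \eqref{eqn:parabolic_HJB} with a $W^{2,\infty}$ initial datum, invokes the long-time convergence $u(t,\cdot)-ct-a\to\bar u$ locally uniformly (from \cite[Thm.\ 3.4]{nguyen2019large}), writes the second difference quotient of $\bar u$ as the second difference quotient of $u(t,\cdot)$ plus three terms that vanish as $t\to\infty$, and imports the $T$-independent bound of Theorem \ref{thm:uniform_second_derivative_HJB}~(a). No truncated stationary problem, no second limit in $R$, and no need for any stationary analogue of Lemma \ref{thm:loc_lip_R}. If you want to keep a two-parameter truncation strategy you would have to reverse the order of limits — for each $\delta>0$ send $R\to\infty$ in the \emph{discounted} truncated problem (where the time variable or the $\delta$-discount gives a finite $L^\infty$ bound and the $R$-error vanishes), and only then send $\delta\to 0$ — but at that point you have essentially reproduced Theorem \ref{thm:well_posedness_HJB_ergodic} rather than proved (b), and the paper's long-time argument remains the shorter path.
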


\begin{proof}
The proofs of (a) and (b) are similar. We only show (b). Let  $u_{0} \in W^{2,\infty} (\R^d )$ and $u$ be the solution of \eqref{eqn:parabolic_HJB}.  By \cite[Theorem 3.4]{nguyen2019large} 
there exists a constant $a \in \mathbb{R}$, such that 
\begin{align*}
   \lim_{t \to \infty} \max_{B(0,R)} |u (t,z) - (ct + \bar{u} (z) +a)| = 0 \quad \text{for all}\quad R >0.
\end{align*}
Thus  we get
\begin{align*}
    &|\bar u (x+y) - 2 \bar u (x) + \bar u (x-y) | \leq
    |u (t,x+y) - 2 u (t,x) + u (t,x-y) |\\ 
    &\quad + |u (t,x+y) - (ct + \bar u (x+y) +a)|+ 2|u (t,x) - (ct + \bar u (x) +a)| \\
    &\qquad+ |u (t,x-y) - (ct + \bar u (x-y) +a)|.
\end{align*}
The result now follows from Theorem \ref{thm:uniform_second_derivative_HJB} (a)   passing to the limit as $t \to \infty$.
\medskip

\noindent (c) The proof is similar to the proof of Theorem \ref{thm:uniform_second_derivative_HJB} (b).
\end{proof}


\section{The Fokker-Planck equation}\label{FP-sec}













Let us collect from  \cite{porretta2024decay}  some result  on the 
fractional Fokker-Planck (FFP) equation with confining drift. 
The time-dependent equation is given by
\begin{align}
    \begin{cases}
        \partial_{t} m - \mathcal{L}^*  m (x) - \text{div} ( m {\mathcal B}  (t,x) ) = 0  & \text{in} \quad (0,T) \times \R^d,\\
    m ( 0 ) = m_{0} & \text{in} \quad \R^d\,,
    \end{cases}
    \label{eqn:FFP}
\end{align}
where $\mathcal{L}$ is the nonlocal diffusion operator defined in \eqref{def:nonlocal_diffusion} and $\mathcal B(t,x)$ is a vector field in $Q_T$ which is continuous and satisfies
\beq\label{bdiss}
 {\mathcal B}(t,x)\cdott x \geq  \alpha_0 \, |x |^{2}   \qquad \forall x  \in \R^d:\, |x|\geq R\,,\, \forall t>0
\eeq
\beq\label{disso}
 ({\mathcal B}(t,x)-{\mathcal B}(t,y))\cdott (x-y) \geq  - \beta_0 |x-y|    \qquad \forall x,y  \in \R^d\,, \,\forall t>0\,
 \eeq
for some $\alpha_0, \beta_0, R>0$. Let us notice that the Fokker-Planck equation in the MFG system \rife{eqn:parabolic_MFG} corresponds to the choice ${\mathcal B}(t,x)= b(x)+ D_pH(x,Du(t,x))$, which satisfies \rife{bdiss}-\rife{disso} due to \ref{B}, \ref{B2}, \ref{H2} and the $C^1$ character of $u$.

\vskip1em

Solutions of \eqref{eqn:FFP} will be defined as measure valued solutions tested against viscosity solutions of the dual problem
\begin{align}\label{fpdual}
\begin{cases}
- \partial_s \phi - \mathcal L \phi + {\mathcal B}(x,s)\cdott D\phi = f & \text{in} \quad (0,t) \times \R^d,\\
\phi(t,x) = \xi(x) & \text{in} \quad \R^d.
\end{cases}
\end{align}
We denote by $C([0, T ); \cM (\rd)^*)$   the functions   from $[0,T]$ into $\cM (\rd)$  which are continuous with respect to the weak-$*$ topology of measures.

\begin{definition}\label{def:dsol}
Let $m_0 \in \mathcal M (\rd)$. A function $m \in C([0, T ); \cM (\rd)^*)$ is a solution of \eqref{eqn:FFP} if
\begin{align*}
&\int_{\R^d}\xi dm(t) + \int_0^t \int_{\R^d} f dm(s) ds = \int_{\R^d} \phi(0, x) dm_0,
\end{align*}
for all $t \in (0, T )$, $\xi \in C_b(\R^d)$, $f \in C_b(Q_t)$,  and viscosity solutions $\phi \in C_b(\overline Q_t)  $ of \eqref{fpdual}.
\end{definition}

\begin{remark}
Since classical solutions are viscosity solutions, any $m$ satisfying Definition \ref{def:dsol} is also a very weak solution in the distributional sense, 
    \begin{align}\label{vwsol}
        &\int_{\rd} \phi (t,x) dm(t)   = \int_{\rd} \phi (0,x) \,dm_0   \\ 
        &\qquad+ 
        \int_0^t \int_{\rd} \Big(\partial_t\phi  + \mathcal{L} \phi(x) - {\mathcal B}\cdott D\phi (x,r)\Big) dm(r)\quad \forall \phi\in C^1_c(\overline Q_t). \nonumber
    \end{align}
Therefore, solutions according to Definition \ref{def:dsol} form a subclass of very weak solutions. It is worth pointing out that  this definition  accommodates for arbitrary growth of the (confining) vector field $\mathcal B$. Indeed, when $\mathcal B$ is unbounded   the term with ${\mathcal B}\cdot D\phi\, m$  in \eqref{vwsol} may not be  well-defined on its own since growth in $\mathcal B$ should be compensated with moments on $m$ (since $m$ only has $k\in(1,\sigma)$ moments, we would need the additional growth condition $|{\mathcal B}(x,t)|\leq C(1+|x|^k)$)  By contrast, the dual formulation above only needs the existence of viscosity solutions of \rife{fpdual}, which can hold even for arbitrary growth of $\mathcal B$.
 \end{remark}

Existence and uniqueness of solutions follow as a special case of Theorem 2.6 in \cite{porretta2024decay}.

\begin{proposition}\label{ex-FP}
Assume $0<k<\sigma$, \ref{nu'}, \rife{bdiss}, \rife{disso}, $m_0\in \mathcal P_k(\R^d)$. Then there exists a unique solution $m \in C([0, T ); \mathcal P_k (\rd))$ of \eqref{eqn:FFP} according to Definition \ref{def:dsol}.
\end{proposition}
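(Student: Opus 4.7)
The statement is identified in the surrounding text as a special case of \cite[Theorem 2.6]{porretta2024decay}, so the plan is to verify that the hypotheses match and to sketch the core ingredients in the present notation rather than reprove the result from scratch. Assumption \ref{nu'} supplies the required two-sided polynomial control on the L\'evy density, while \eqref{bdiss}--\eqref{disso} correspond to the confinement condition and the one-sided bound (with exponent $\gamma=2$) of that paper; the restriction $0<k<\sigma$ together with $m_0\in\cP_k(\R^d)$ is exactly what is needed for $\langle x\rangle^k$ to serve as an admissible Lyapunov weight with $|\mathcal L\langle x\rangle^k|$ globally bounded.

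Uniqueness comes directly from Definition \ref{def:dsol}. Suppose $m_1,m_2$ are two solutions of \eqref{eqn:FFP} with the same initial datum $m_0$. Fix $t\in(0,T)$ and an arbitrary $\xi\in C_b(\R^d)$, and let $\phi$ be a viscosity solution of the backward dual problem \eqref{fpdual} with $f\equiv 0$ and terminal value $\xi$ (its existence and boundedness follow from the HJB theory of Section \ref{HJB} adapted to the case of zero Hamiltonian and drift $\mathcal B$ satisfying \eqref{bdiss}--\eqref{disso}). Plugging this $\phi$ into Definition \ref{def:dsol} gives
\[
\int_{\R^d}\xi\,dm_i(t) \;=\; \int_{\R^d}\phi(0,\cdot)\,dm_0 \qquad(i=1,2),
\]
so $\int\xi\,d(m_1(t)-m_2(t))=0$ for every $\xi\in C_b$, hence $m_1(t)=m_2(t)$ as measures.

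For existence I would proceed by approximation. Truncate $\mathcal B$ to a bounded Lipschitz drift $\mathcal B_R$ using a smooth cutoff $\chi_R$ as in \eqref{chir}, arranged so that \eqref{bdiss}--\eqref{disso} persist uniformly in $R$. For the regularized problem, classical semigroup or Galerkin theory delivers a solution $m^R\in C([0,T);\cP(\R^d)^*)$. Uniform moment bounds are obtained by testing against the Lyapunov function $\langle x\rangle^k$: since $k<\sigma$ yields $|\mathcal L\langle x\rangle^k|\leq C$ globally (a standard fractional estimate handled also in the technical sections of \cite{porretta2024decay}) and \eqref{bdiss} gives $\mathcal B\cdot D\langle x\rangle^k\leq -c\langle x\rangle^k+C$ for $|x|$ large, a Gronwall argument produces
\[
\sup_{R,\,t\in[0,T]}\int_{\R^d}\langle x\rangle^k\,dm^R(t)\;\leq\; C(T,m_0).
\]
Prokhorov-type tightness then extracts a weak-$*$ convergent subsequence with limit $m\in C([0,T);\cP_k(\R^d))$, and this limit inherits the required continuity from the equicontinuity of $t\mapsto\int\phi\,dm^R(t)$ for each fixed test function. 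To pass to the limit in the dual identity, one relies on stability of viscosity solutions of \eqref{fpdual} under the locally uniform convergence $\mathcal B_R\to\mathcal B$ (which holds because the truncation only acts far from the origin and $\mathcal B$ is continuous).

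The main obstacle is really the viscosity theory for the dual equation \eqref{fpdual} with an unbounded confining drift: both the uniqueness step and the stability step in the existence proof rely on having well-posedness, a comparison principle, and suitable super-solution barriers (essentially $\langle x\rangle^k$) for that backward linear problem. These ingredients are not intrinsic to the Fokker--Planck equation; they are supplied by the HJB machinery of Section \ref{HJB} (applied with $H\equiv 0$) together with the dual analysis carried out in \cite{porretta2024decay}, which is precisely why the proposition can be stated as a corollary of that work.
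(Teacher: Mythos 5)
Your proposal takes the same route as the paper, which proves this proposition by the one-line citation to Theorem 2.6 of \cite{porretta2024decay}; your sketch of the underlying duality, truncation, and tightness machinery is consistent with that reference (and mirrored in Lemma~\ref{lem:comp} here). Two small inaccuracies in the Lyapunov computation should be corrected: the global bound $|\mathcal L\langle x\rangle^k|\leq C$ only holds for $k\leq 1$, while for $1<k<\sigma$ one has $\mathcal L\langle x\rangle^k=O(\langle x\rangle^{k-1})$, unbounded but still subdominant; and the confining estimate is written with the wrong sign. From \eqref{bdiss} one gets $\mathcal B\cdot D\langle x\rangle^k=k\langle x\rangle^{k-2}\,\mathcal B\cdot x\geq c\langle x\rangle^k-C$ for $|x|$ large, and it is the term $-\mathcal B\cdot D\phi$ in the weak form $\tfrac{d}{dt}\int\phi\,dm=\int(\mathcal L\phi-\mathcal B\cdot D\phi)\,dm$ that provides dissipation; the inequality $\mathcal B\cdot D\langle x\rangle^k\leq -c\langle x\rangle^k+C$ as you wrote it would instead give exponential growth of the $k$-moment rather than the claimed bound.
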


Let us point out that, once $m\in C([0, T ); \mathcal P_k (\rd))$, then it is possible to use unbounded functions $\xi, f$ in Definition \ref{def:dsol}, precisely one can take continuous $\xi, f$ such that $\xi\in L^\infty(\langle x\rangle^{-k}), f\in L^\infty((0,t);L^\infty(\langle x\rangle^{-k}) )$ (cf. \cite{porretta2024decay}).

For a given time-independent drift ${\mathcal B}(x)$, the corresponding stationary equation is given by
\begin{align}
   -\mathcal{L}^*  \bar{m} (x) - \text{div} ( \bar{m} {\mathcal B} (x) ) = 0.
    \label{eqn:FFP_stationary}
\end{align}
Solutions of this problem are defined by duality in a similar way as in Definition \ref{def:dsol}, and existence 
is given by Theorem 5.7 in \cite{porretta2024decay}, along with exponential convergence $m(t)\to \bar m$ as $t\to \infty$. 

\begin{remark}\label{form_FP-stat}  A  measure $\bar m$ can be defined to be a solution  of  \rife{eqn:FFP_stationary} if it is a stationary solution, in the sense of Definition \ref{def:dsol}, of problem \rife{eqn:FFP} with $m_0=\bar m$. It is however equivalent to give a duality formulation with only space-dependent test functions, namely to say that $\bar m$ solves \rife{eqn:FFP_stationary} if and only if 
\beq\label{duality-stat}
\int_{\R^d} f\, d\bar m= c \qquad \forall (f,c)\,:\, \,\, \exists \,\phi\,\, \hbox{viscosity sol. of}\,\, c- \mathcal L \phi + {\mathcal B}(x)\cdott D\phi = f\,.
\eeq
In fact,  \rife{duality-stat} easily follows from Definition \ref{def:dsol}, by using  $\phi+ c(t-s)$ in  \rife{fpdual}, then dividing by $t$. Conversely, if $\phi$ solves \rife{fpdual}, integrating we have that $\int_0^t \phi(x,s)ds$ is a solution of a stationary problem corresponding to $\int_0^t f(x)ds+ \phi(x,t)-\phi(x,0)$, so that \rife{duality-stat} implies
$$
\int_{\R^d}\left\{ \int_0^t f(x)ds+ \phi(x,t)-\phi(x,0)\right\} d\bar m = 0 
$$
which means that $\bar m$ satisfies Definition   \ref{def:dsol}.
\end{remark}

\begin{proposition}\label{ex-FP-stat}
Assume $0<k<\sigma$, \ref{nu'},  and ${\mathcal B}$ independent of $t$ satisfying \rife{bdiss}--\rife{disso}. 
Then there exists a unique 
solution $\bar m \in \mathcal P_k (\rd)$ of \eqref{eqn:FFP_stationary} according to Definition \ref{def:dsol}.
\end{proposition}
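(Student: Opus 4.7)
The plan is to derive both existence and uniqueness from the corresponding results for the time-dependent equation (Proposition \ref{ex-FP}) combined with the exponential decay estimate in the weighted total variation norm $\|\cdot\|_{TV_k}$ developed in \cite{porretta2024decay}. The key ingredient I will invoke is that the linear semigroup $S_t$ associated with \eqref{eqn:FFP} (for our time-independent drift $\mathcal B$ satisfying \rife{bdiss}--\rife{disso}) contracts zero-mass data: there exist $C,\omega>0$ such that for every $\mu\in \cM_k(\rd)$ with $\mu(\rd)=0$,
\begin{equation}\label{dec-plan}
\|S_t\mu\|_{TV_k}\le Ce^{-\omega t}\|\mu\|_{TV_k}\qquad\forall\, t\ge 0.
\end{equation}

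For uniqueness, let $\bar m_1,\bar m_2\in \cP_k$ be two stationary solutions. Taking each as an initial datum in \eqref{eqn:FFP} and using uniqueness in Proposition \ref{ex-FP} together with Remark \ref{form_FP-stat}, I obtain $S_t\bar m_i=\bar m_i$ for all $t\ge 0$. By linearity of the equation in $m$ (the drift $\mathcal B$ is fixed), $S_t(\bar m_1-\bar m_2)=\bar m_1-\bar m_2$; since this difference has zero total mass and finite $\|\cdot\|_{TV_k}$ norm, applying \eqref{dec-plan} and letting $t\to\infty$ forces $\bar m_1=\bar m_2$.

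For existence, fix any $m_0\in\cP_k$ and let $m(t)=S_tm_0$ be the time-dependent solution from Proposition \ref{ex-FP}. The semigroup property (inherited from uniqueness) and linearity yield
$$
m(t+s)-m(t)=S_t\bigl(m(s)-m_0\bigr),
$$
where $m(s)-m_0$ has zero mass and, by the uniform-in-time moment bound from \cite{porretta2024decay}, uniformly bounded $TV_k$ norm in $s$. Hence \eqref{dec-plan} implies that $\{m(t)\}$ is Cauchy in $TV_k$; let $\bar m$ be its limit. Positivity and total mass $1$ pass to the limit, so $\bar m\in\cP_k$. Moreover, continuity of $S_\tau$ on bounded sets of $\cM_k$ (which follows from \eqref{dec-plan} applied to the zero-mass part and preservation of total mass) allows me to pass to the limit $t\to\infty$ in $m(t+\tau)=S_\tau m(t)$ to get $S_\tau\bar m=\bar m$ for every $\tau>0$. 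By Remark \ref{form_FP-stat}, this is equivalent to \eqref{duality-stat}, so $\bar m$ is the stationary solution.

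The main obstacle is really the decay estimate \eqref{dec-plan} itself, which is the substantive content of \cite[Theorem 5.7]{porretta2024decay}: it is obtained there via a Lyapunov function built on $\langle x\rangle^k$, taking advantage of the sub-critical exponent $\sigma>1$ in \ref{nu'} and of the super-linear confinement encoded by \rife{bdiss}, combined with the dual (HJB-type) interpretation of the weighted oscillation seminorm. Once \eqref{dec-plan} is granted, the contraction/fixed-point argument above is comparatively soft; the only minor care needed is to remain within the duality class of Definition \ref{def:dsol} throughout, which is guaranteed because each $m(t)$ already lies in $C([0,T);\cP_k)$ by Proposition \ref{ex-FP}.
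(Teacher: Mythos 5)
Your proposal is correct and follows essentially the same route as the paper: both hinge entirely on the exponential $TV_k$ decay for zero-mass solutions of the Fokker-Planck equation proved in \cite{porretta2024decay}. The only difference is that the paper's proof simply verifies the hypotheses of \cite[Theorem~5.7]{porretta2024decay} (which already yields existence of $\bar m$ together with exponential convergence $m(t)\to\bar m$), whereas you reconstruct that conclusion from the decay estimate \eqref{decaykm} via a Cauchy-in-$TV_k$ and fixed-point argument for the semigroup; this is a more explicit, but mathematically equivalent, presentation.
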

\begin{proof}
Note that the assumptions of Theorem 5.7 in \cite{porretta2024decay} are satisfied (we are in the case where the Lyapunov function $\langle x\rangle^k$ satisfies the condition  for exponential convergence to the stationary problem, due to Lemma \ref{lemma:supersolution_property}).
Starting from  some $m_0\in \mathcal P_k(\R^d)$, the unique solutions $m(t)$ of \eqref{eqn:FFP} converge in $\cP_k(\R^d)$ to a unique limit $\bar m$ as $t\to\infty$. 
\end{proof}


In the next section we will prove compactness of solutions of a family of truncated MFGs. To do this, we need a compactness result for the family of solutions of a truncated version of the Fokker-Planck equation \eqref{eqn:FFP}, namely
\begin{align}
    \begin{cases}
        \partial_{t} m - \mathcal{L}^*  m (x) - \text{div} ( m \,{\mathcal B}_R (t,x)) = 0, \\
    m ( 0 ) = m_{0},
    \end{cases}
    \label{eqn:FFPt}
\end{align}
where ${\mathcal B}_R$ satisfies
\beq\label{tb}
 \begin{split}
 & {\mathcal B}_R := b_{1,R}+b_{2,R}\,,\quad b_{1R}, b_{2R}\in C_b(Q_T)  \\
 & \, b_{1,R}:=b_1 \cdott \chi_{R}\,,  \quad \|b_{2,R}\|_{\infty}\leq C, \quad \text{and}\quad b_{2,R}  \underset{R\to \infty}\to  b_2 \ \text{locally uniformly}, 
\end{split}
\eeq
for some continuous functions $b_1, b_2$, where  $\chi_R$  is defined in \eqref{chir} and $C$ is independent of $R$. 

In the next section we will take $b_1=b$ and $b_{2,R}=D_pH(x,Du_R(t,x))$, for $u_R$ solution of a corresponding truncated HJB.
In this case we easily have  $b_{2,R}$  bounded by \ref{H2}, while $b_1$ satisfies \ref{B}. The compactness result follows from a similar argument of  Lemma 6.3 in \cite{porretta2024decay}, specialized to our setting. 

\begin{lemma} 
\label{lem:comp}
Assume  \ref{nu'},  and that  \rife{tb} holds for some vector field $b_1$ satisfying condition \ref{B}.  
Let $m_0\in \mathcal P_k(\rd)$ for some $0<k<\sigma$, and $m_R\in C([0,T];\mathcal P_k(\rd))$
be  a solution of \eqref{eqn:FFPt} according to Definition \ref{def:dsol}.  Then we have:
\medskip

\noindent (a) There is $C_T>0$ independent of $R$ such that
\beq\label{momkt}
    \int_{\R^d }   \langle x \rangle^{k} dm_{R}(t)  \leq C_T\int_{\R^d }   \langle x \rangle^{k}  dm_{0} \qquad\forall t\in(0,T),
\eeq

\noindent (b) For every $\varepsilon>0$ there is $\delta>0$ independent of $R$ such that
$$
d_0(m_R(t),m_R(s)) \leq \varepsilon,\qquad \forall t,s\in(0,T),\quad |t-s|<\delta.
$$

\noindent (c) There is  $m\in C([0,T];\cP_k(\R^d))$ such that $m_R\to m$ in $C([0,T];\cP_k(\R^d))$. Moreover, $m$ is the unique solution of \eqref{eqn:FFP} with ${\mathcal B}=b_1+b_2$.
\end{lemma}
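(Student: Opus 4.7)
The strategy is exactly that of Lemma 6.3 in \cite{porretta2024decay}: everything is extracted from the dual formulation of Definition~\ref{def:dsol} combined with the Lyapunov function $\langle x\rangle^k$.

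For (a), one tests against $\xi(x)=\langle x\rangle^k$ and $f=0$ in Definition~\ref{def:dsol}, admissible by the extension noted after Proposition~\ref{ex-FP}. By Lemma~\ref{lemma:supersolution_property} we have $\mathcal L\langle x\rangle^k\le c_1\langle x\rangle^k$, while \ref{B} and the structure \rife{tb} of $\mathcal B_R$ give $\mathcal B_R\cdot x\ge -c_2\langle x\rangle$ uniformly in $R$, so that $\mathcal B_R\cdot D\langle x\rangle^k\ge -c_3\langle x\rangle^k$. A routine check then shows that $\psi(s,x):=e^{C(t-s)}\langle x\rangle^k$ is, for $C$ large enough but independent of $R$, a classical supersolution of the dual HJB \rife{fpdual} with terminal datum $\langle x\rangle^k$ and zero source. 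Comparison gives $\phi_R(0,\cdot)\le e^{CT}\langle x\rangle^k$, and inserting into the identity of Definition~\ref{def:dsol} yields \eqref{momkt} with $C_T=e^{CT}$.

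For (b), fix $\xi\in\text{Lip}_{1,1}(\R^d)$ and pick a mollification $\xi_\varepsilon\in C_b^2(\R^d)$ with $\|\xi-\xi_\varepsilon\|_\infty\le\varepsilon$ and $\|D\xi_\varepsilon\|_\infty\le 1$. Since $\phi(\tau,x):=\xi_\varepsilon(x)$ is a classical solution of \rife{fpdual} with terminal $\xi_\varepsilon$ and source $\mathcal L\xi_\varepsilon-\mathcal B_R\cdot D\xi_\varepsilon$, Definition~\ref{def:dsol} applied between $s$ and $t$ gives
\[
\int_{\R^d}\xi_\varepsilon\,d(m_R(t)-m_R(s))=\int_s^t\!\int_{\R^d}\bigl(\mathcal L\xi_\varepsilon-\mathcal B_R\cdot D\xi_\varepsilon\bigr)\,dm_R\,d\tau.
\]
The first integrand is controlled by $\|\xi_\varepsilon\|_{C_b^2}$, and the second by $(|b_1|+C)|D\xi_\varepsilon|$; since \ref{B}--\ref{B2} imply $|b_1(x)|\le c(1+|x|)$ and applying (a) with some $k'\in(1,\sigma)$ furnishes a uniform first-moment bound, the right-hand side is bounded by $C_\varepsilon|t-s|$ uniformly in $R$. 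Combined with the $\varepsilon$-error in $\xi-\xi_\varepsilon$, a standard $\varepsilon$--$\delta$ argument produces the required $d_0$-equicontinuity.

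Part (c) is the main point. Applying (a) with some $k'\in(k,\sigma)$ gives a uniform $k'$-moment bound and hence uniform integrability of $\langle x\rangle^k$ against $m_R(t)$; combined with (b) the Arzel\`a--Ascoli theorem extracts a subsequence $m_{R_n}\to m$ in $C([0,T];\cP_k(\R^d))$. The main technical obstacle is identifying $m$ as the solution of \eqref{eqn:FFP} with drift $\mathcal B=b_1+b_2$. To this end, for fixed $\xi\in C_b(\R^d)$ and $f\in C_b(Q_t)$, let $\phi_R$ and $\phi$ denote the viscosity solutions of \rife{fpdual} with coefficients $\mathcal B_R$ and $\mathcal B$ respectively. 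After reversing time, the dual problem becomes a forward parabolic HJB with Hamiltonian $H\equiv 0$ and drift satisfying an analogue of \ref{B} uniformly in $R$ (the perturbation by $b_{2,R}$ only shifts the $\beta$ constant, and truncation of $b_1$ is harmless on any fixed ball). Proposition~\ref{lipest0} thereby furnishes uniform Lipschitz and $L^\infty$ bounds on the $\phi_R$, and stability of viscosity solutions under locally uniform convergence of the coefficient $\mathcal B_R\to\mathcal B$ (ensured by \rife{tb}) forces $\phi_R\to\phi$ locally uniformly. Passing to the limit in Definition~\ref{def:dsol} written for $m_{R_n}$, with the tails at infinity controlled via the uniform moment bound of (a), identifies $m$ as a solution of the limit problem; uniqueness from Proposition~\ref{ex-FP} then lifts subsequential convergence to convergence of the whole family.
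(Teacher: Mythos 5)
Your proof of part (a) is essentially the paper's argument in different clothing: the paper substitutes the explicit function $\phi(s,x)=e^{\lambda(t-s)}\langle x\rangle^k$ (viewed as the solution of the dual problem with the corresponding source), while you invoke a comparison with the supersolution $e^{C(t-s)}\langle x\rangle^k$; both hinge on Lemma~\ref{lemma:supersolution_property} and on the uniform-in-$R$ lower bound ${\mathcal B}_R(x,s)\cdot x\ge -c|x|$, and they give the same constant. No issue there.

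Part (b) has a genuine gap. You write that \ref{B}--\ref{B2} ``imply $|b_1(x)|\le c(1+|x|)$'', and you use this to bound $\int {\mathcal B}_R\cdot D\xi_\varepsilon\,dm_R$ by a moment. But \ref{B} is only a one-sided lower bound on $(b_1(x)-b_1(y))\cdot(x-y)$ and \ref{B2} gives local Lipschitz regularity; neither controls the growth of $|b_1|$ (for example $b_1(x)=x+e^{x}$ in dimension one satisfies both hypotheses but has exponential growth). Consequently the right-hand side of your identity cannot be bounded uniformly in $R$ by moments alone. The paper avoids this completely: it does not integrate a source against $m_R$ at all, but instead solves the dual Cauchy problem on $(s,t)$ with $f=0$ and terminal datum $\xi$, localizes the mass on a compact set $K$ via the moment bound of part (a), and then uses the \emph{short-time uniform continuity of the dual solution} from [Lemma 6.2, porretta2024decay], which depends only on the bound of ${\mathcal B}_R$ on $K$ -- a locally uniform quantity -- and not on any global growth of $b_1$.

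Part (c) also contains a misstep of the same type. You assert that, after time reversal, the dual drift ${\mathcal B}_R=b_1\chi_R+b_{2,R}$ satisfies ``an analogue of \ref{B} uniformly in $R$'' because ``truncation of $b_1$ is harmless on any fixed ball'', and you then invoke Proposition~\ref{lipest0} for global Lipschitz bounds on $\phi_R$. The truncation is indeed harmless \emph{locally}, but Proposition~\ref{lipest0} is a global estimate requiring \ref{B} on the whole space; multiplying by $\chi_R$ destroys the confining part $\alpha|x-y|^2$ (since $\chi_R\equiv0$ outside $B_{2R}$), and the cross term $b_1(y)\bigl(\chi_R(x)-\chi_R(y)\bigr)\cdot(x-y)$ is not bounded below by $-\beta'|x-y|$ uniformly in $R$ without precisely the linear growth on $b_1$ that you do not have. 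The paper neither needs nor claims a global Lipschitz bound on $\phi_R$: it uses only the uniform $L^\infty$ bound $\|\phi_R\|_\infty\le\|\xi\|_\infty+t\|f\|_\infty$ coming from standard comparison for bounded drifts, plus \emph{local} equi-continuity in $x$ via Proposition~\ref{regeff} and in $t$ via [Lemma 6.2, porretta2024decay]. Local compactness suffices to pass to the limit via Arzel\`a--Ascoli and stability of viscosity solutions; replacing it by a global estimate that fails introduces a circular dependence on an unavailable hypothesis.
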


\begin{remark}\label{rem:unifbnd}
According to the proof below, the constants $\delta$ and $C_T$ above only depend  on  $k, \|m_0\|_{\cP_k},  c_{\sigma,d}, \beta$ (from\ref{nu'},  \ref{B}), on local bounds of $b_1$ and  on the constant $C$ in \rife{tb}.
\medskip
\end{remark}

\begin{proof}
In Definition \ref{def:dsol}, written for $m_R$, we take $\phi(s,x)= e^{\lambda(t-s)} \langle x\rangle^k$ obtaining
\begin{align*}
 & \int_{\R^d }  \langle x \rangle^{k} dm_{R}(t)    = - \int_0^t \int_{\R^d } [- \partial_s \phi - \mathcal L \phi + {\mathcal B}_R (x,s)\cdott D\phi ] dm_R+ e^{\lambda t}\int_{\R^d }   \langle x \rangle^{k}  dm_0
\\ & \quad  = - \int_0^t \int_{\R^d } e^{\lambda(t-s) }[ \lambda  \langle x\rangle^k  - \mathcal L  \langle x\rangle^k + k  \langle x\rangle^{k-2} {\mathcal B}_R (x,s)\cdott x ] dm_R+ e^{\lambda t}\int_{\R^d }    \langle x \rangle^{k}  dm_0\,.
\end{align*}
Using \rife{tb} we have
\begin{align*}
{\mathcal B}_R (x,s)\cdott x & = b_1(x) \cdott x\, \chi_R(x) +b_{2,R}\cdott x  
\geq  - (|b_1(0)|+   \beta)  |x| - C\, |x|
\end{align*}
on account of condition  \ref{B} satisfied by $b_1$. Hence we get
\beq\label{momt}
\begin{split}\int_{\R^d }   \langle x \rangle^{k} dm_{R}(t)   & \leq - \int_0^t \int_{\R^d } e^{\lambda(t-s) }[ \lambda  \langle x\rangle^k  - \mathcal L  \langle x\rangle^k - k  \langle x\rangle^{k-1} (|b_1(0)|+   \beta+ C)  ] dm_R \\ & \quad \qquad + e^{\lambda t}\int_{\R^d }    \langle x \rangle^{k}  dm_0\,.
\end{split}
\eeq
Since $\mathcal L  \langle x\rangle^k \leq c  \langle x\rangle^{k-1}$ (for some $c$ depending on $\nu, k, \sigma, d$, see also Lemma \ref{lemma:supersolution_property}), 
we have
\begin{align*}
 & \int_0^t \int_{\R^d } e^{\lambda(t-s) }[ \lambda  \langle x\rangle^k  - \mathcal L  \langle x\rangle^k - k  \langle x\rangle^{k-1} (|b_1(0)|+   \beta+ C)  ] dm_R \\ & \qquad \geq 
\int_0^t \int_{\R^d } e^{\lambda(t-s) }\{ \lambda  \langle x\rangle^k  - \  \langle x\rangle^{k-1} \left[c + k (|b_1(0)|+   \beta+ C) \right]  \} dm_R
\end{align*}
and we can choose $\lambda $ sufficiently large (depending on $k, |b_1(0)|, \beta, C, \nu, \sigma, d$) so that this integral is positive. Hence \rife{momt} implies statement (a).

Once we have the uniform estimate on the moments of $m_R(t)$, part (b) follows exactly as in Lemma 6.3 of  \cite{porretta2024decay}, where the local compactness only uses local bounds of ${\mathcal B}_R$.  This means that $\delta $ depends on $\vep$ only through the constant $C_T$ of \rife{momkt},  $\|m_0\|_{\cP_k}$,  and the bound of ${\mathcal B}_R$ on a compact set $K= K( \vep, C_T, \|m_0\|_{\cP_k})$. 

%

Part (c) then follows from the Arzela-Ascoli theorem since the sequence is equicontinuous by (b) and pointwise precompact by (a) and Prokhorov's theorem. This type of convergence is enough to pass to the limit and show that any limit point is a solution of \eqref{eqn:FFP} (see below). Since \eqref{eqn:FFP} has a unique solution (\cite[Thm 2.6]{porretta2024decay}), this then implies full convergence of the sequence.

We are only left to check that the limit $m$ is a solution according to Definition \ref{def:dsol}. To this purpose,   take $f\in C_b(Q_t), \xi \in C_b(\R^d)$,  and consider that since ${\mathcal B}_R$ is bounded and continuous,  by standard results
%
(see e.g. \cite[Theorem 5.1]{ersland2020classical}),  there is a unique viscosity solution of
\begin{align}\label{fpdualR}
\begin{cases}
- \partial_t \phi_R - \mathcal L \phi_R + {\mathcal B}_R(s,x)D\phi_R = f & \text{in} \quad (0,t) \times \R^d,\\
\phi_R(t,x) = \xi(x) & \text{in} \quad \R^d
\end{cases}
\end{align}
which satisfies  $\|\phi_R\|_\infty \leq \|\xi\|_\infty +  \|f\|_\infty\,  t$.  Applying Proposition \ref{regeff}, we have that the sequence $\{\phi_R\}_{R>0}$ is locally equi-continuous in $x$. 
%
%
By Lemma 6.2 in \cite{porretta2024decay}, applied to $ \phi_R( \cdott-s)$ (for all fixed $s\in(0,t)$), we deduce that $\phi_R$ is also equi-continuous in time (locally in space). This is actually true in the whole time interval $(0,t)$,  since $\xi$ is uniformly continuous on any compact set; namely, by  \cite[Lemma 6.2]{porretta2024decay}, for any $\vep>0$ and any compact set $K\subset \R^d$ it holds $\|\phi_R(t-\tau)-\xi\|_{L^\infty(K)}\leq \vep$ for all $\tau<\de$, where $\de$ is independent of $R$.  
Therefore, by the Arzela-Ascoli theorem, $\{\phi_R\}_{R>0}$ is (locally in space) compact in the uniform topology and converges uniformly, up to a subsequence, to some function $\phi\in C_b(Q_t)$ such that $\phi(t)=\xi$.  

By the local uniform convergence of ${\mathcal B}_R$ and $\phi_R$, and the stability of the notion of viscosity solutions, we can then pass to the limit in \eqref{fpdualR} and conclude that any limit point $\phi$ is a viscosity solution of \eqref{fpdual}. But problem \eqref{fpdual} has a unique bounded viscosity solution, which therefore coincides with the limit of $\phi_R$.  
%

The next step is to note that, since $m_R$ solves \eqref{eqn:FFPt} according to Definition \ref{def:dsol}, 
\begin{align*}
&\int_{\R^d}\xi dm_R(t) + \int_0^t \int_{\R^d} f dm_R(s) ds = \int_{\R^d} \phi_R(0, x) dm_0.
\end{align*}
In view of the convergence properties of $m_R$ and $\phi_R$, we send $R\to\infty$ and conclude that 
\begin{align*}
&\int_{\R^d}\xi dm(t) + \int_0^t \int_{\R^d} f dm(s) ds = \int_{\R^d} \phi(0, x) dm_0.
\end{align*}
This shows that $m$ is a solution of \eqref{eqn:FFP} according to Definition \ref{def:dsol}.
\end{proof}

\section{Well-posedness for evolutionary and ergodic stationary MFG}
\label{sec:evol}

In this section we will prove the well-posedness of the evolutionary and stationary MFGs \eqref{eqn:parabolic_MFG} and \eqref{eqn:ergodic_MFG} with unbounded confining drifts $b$. The strategy is to use known results for MFGs with bounded drift from \cite{ersland2020classical}, a truncation procedure for $b$, compactness arguments for the truncated equations, and stability results to conclude that the limits are solutions of the sought after MFGs.


\subsection{Existence for the evolutionary MFG}

Let us begin by making precise the notion  of solution to \eqref{eqn:parabolic_MFG}.

\begin{definition}\label{def:MFG_soln} Assume \ref{nu'}, \ref{B}--\ref{B2}, \ref{H1}--\ref{H3} and \ref{H6}, \ref{G}. Let $u_T\in W^{1,\infty}(\R^d)$, and $m_0 \in   L^1(\R^d)\cap \cP_k(\R^d)$, for some $0<k<\sigma$.  

\noindent A couple $(u,m)\in C_b(\overline Q_T)\times C([0,T];\cP_k(\R^d))$ is a solution of \eqref{eqn:parabolic_MFG} if  
\vskip0.2em
(i) $u$ is a  viscosity solution (in the sense of Definition \ref{def:visc_soln_parabolic}) of the problem 
\begin{align}
    \begin{cases}
        - \partial_t u - \mathcal{L} u (x) + H ( x,Du ) +  b(x) \cdott Du =  F(x,m(t)) , &\text{in } ( 0,T ) \times \R^d,  \\
        u ( T, \cdot ) = u_T, & \text{in } \R^d,  
    \end{cases}
\label{mfg_HJB}
\end{align}
\vskip0.4em
(ii) $m$ is  a solution (in the sense of Definition \ref{def:dsol}) of the problem
\begin{align}
    \begin{cases}
        \partial_t m - \mathcal{L}^* m  -\dive(m\, (b(x) + D_pH(x,Du)) )  = 0 , &\text{in } ( 0,T ) \times \R^d,  \\
       m ( 0, \cdot ) = m_{0}, & \text{in } \R^d\,.  
    \end{cases}
    \label{mfg_FP}
\end{align}
\end{definition}

Let us stress that the above definition entirely relies on the results of the previous sections. Indeed, due to \ref{G}, the term $F(x,m(t))$ belongs to $C_b(Q_T) \cap L^\infty((0,T); W^{1,\infty}(\R^d))$, so $u$ is the unique solution of   the equation \rife{mfg_HJB}, and $u$ is $C^1$ in the space variable due to Theorem \ref{thm:uniform_second_derivative_HJB}\footnote{this is why we require assumption \ref{H6}, though it could be weakened up to more technical stuff}. In particular, $D_pH(x,Du)$ is well defined and problem \rife{mfg_FP} falls under the conditions of Proposition \ref{ex-FP}, hence $m$ is the unique solution of this equation.
\vskip0.5em

We now prove the existence of a solution $(u,m)$ of the MFG system. To this purpose, we proceed by approximation. 
For $R>0$, let $b_R$ be the truncation of $b$ defined as $b_{R} ( x ) := b ( x ) \chi_{R} ( x )$ where $\chi_{R}$ is the cut-off function defined in \eqref{chir}. Note that  $\text{supp}\, \chi_R = \{x: |x|\leq 2R\}$  and $|D\chi_R|\leq \frac2R$, and hence $b_R$ is bounded and Lipschitz. We then consider the truncated MFG system
\begin{align}
  \begin{cases}
        -\partial_t u_R - \mathcal{L} u_R (x) + H ( x,Du_R ) +  b_R(x) \cdott Du_R     = F ( x, m_R ( t ) ) & \text{ in } ( 0,T ) \times \R^d, \\[0.2cm]
        \partial_t m_R - \mathcal{L}^* m_R (x) - div \big( m_R \big( b_R ( x )  +D_p H ( x, Du_R  )\big)  \big) = 0 & \text{ in } ( 0,T ) \times \R^d, \\[0.2cm]
      m ( 0,x ) = m_0 (x),\qquad u ( x,T ) = u_T. &
  \end{cases}
  \label{eqn:parabolic_MFG_truncated}
\end{align}
We will prove existence of solutions $(u_R,m_R)$ of  this problem, compactness for the sequence of solutions $\{(u_R,m_R)\}_{R>0}$, and then pass to the limit as $R\to \infty$ to show that any limit point $(u,m)$ solves \eqref{eqn:parabolic_MFG}. 


\begin{thm}\label{ex-MFG}
Assume
\ref{nu'}, \ref{B}--\ref{B2}, \ref{H1}--\ref{H6}, \ref{G}. Let $u_T\in W^{1,\infty}(\R^d)$, and $m_0 \in   L^1(\R^d)\cap \cP_k(\R^d)$, for some $0<k<\sigma$. 
Then there exists  a solution $(u,m)$ of \rife{eqn:parabolic_MFG}. In addition, if we assume \ref{F1}, $(u,m)$ is the unique solution of \rife{eqn:parabolic_MFG}.
\end{thm}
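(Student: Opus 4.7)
\emph{Strategy.} I follow the truncation approach announced just before the statement. For each $R>0$, the drift $b_R$ is bounded and Lipschitz and $F$ satisfies \ref{G}, so the truncated MFG \rife{eqn:parabolic_MFG_truncated} falls in the setting covered in \cite{ersland2020classical}: it admits a solution $(u_R,m_R)$ with $u_R\in C_b(\overline Q_T)\cap C^{1,2}_{\textup{loc}}(Q_T)$ (by Proposition \ref{thm:HJB_well_posedness}) and $m_R\in C([0,T];\cP_k(\R^d))$ (by Proposition \ref{ex-FP}). The plan is to obtain uniform-in-$R$ estimates, extract a limit $(u,m)$ as $R\to\infty$ along a subsequence, and verify that $(u,m)$ solves \rife{eqn:parabolic_MFG}.

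\emph{Uniform estimates and compactness.} Since $\|F(\cdot,m_R(t))\|_{W^{1,\infty}(\R^d)}\leq C_F$ uniformly in $R,t$ by \ref{G}, Proposition \ref{thm:HJB_well_posedness} parts (a)--(c) gives a uniform $L^\infty$ bound on $u_R$, the Lipschitz bound $|u_R(t,x)-u_R(t,y)|\leq C|x-y|+c_T\langle R\rangle^{-\beta}(\langle x\rangle^\beta+\langle y\rangle^\beta)$ on $B_R$, and uniform local $W^{2,\infty}$ and $\partial_t$ bounds on compacts of $Q_T$. For any fixed ball $B_\rho$ and $R$ large enough, the family $\{u_R\}$ is therefore equi-Lipschitz on $B_\rho$. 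On the FP side, the vector field ${\mathcal B}_R=b_R+D_pH(\cdot,Du_R)$ has the structure \rife{tb} with $b_1=b$ (which verifies \ref{B}) and $b_{2,R}=D_pH(\cdot,Du_R)$ uniformly bounded by $L_H$ thanks to \ref{H2}; hence Lemma \ref{lem:comp}(a)--(b) gives a uniform moment bound $\sup_t\|m_R(t)\|_{\cP_k}\leq C_T$ and $d_0$-equicontinuity. Arzela-Ascoli then yields, along a subsequence, $u_R\to u$ and $Du_R\to Du$ locally uniformly in $Q_T$, while Lemma \ref{lem:comp}(c) provides $m_R\to m$ in $C([0,T];\cP_k(\R^d))$.

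\emph{Passage to the limit.} By \ref{G} and the $d_0$-convergence of $m_R(t)$, $F(x,m_R(t))\to F(x,m(t))$ locally uniformly in $(t,x)$; together with $b_R\to b$ locally uniformly, stability of nonlocal viscosity solutions (as in Theorem \ref{thm:well_posedness_HJB_parabolic}) implies that $u$ is a viscosity solution of \rife{mfg_HJB}, and the Lipschitz regularity passes to the limit. Since $Du_R\to Du$ locally uniformly, ${\mathcal B}_R\to b+D_pH(\cdot,Du)$ locally uniformly, and Lemma \ref{lem:comp}(c) identifies $m$ as the unique solution of \rife{mfg_FP} in the sense of Definition \ref{def:dsol}. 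This gives existence.

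\emph{Uniqueness under \ref{F1} and main obstacle.} The uniqueness part is the classical Lasry-Lions computation: differentiating $t\mapsto\int(u_1-u_2)\,d(m_1-m_2)$ using both equations (justified by the $C^{1,2}_{\textup{loc}}$ regularity of $u_i$ and the duality framework of Section \ref{FP-sec}), the $\mathcal L$ and $b$ terms cancel pairwise, and the boundary contributions vanish since $u_i(T)=u_T$ and $m_i(0)=m_0$. What is left is
\begin{equation*}
\int_0^T\!\!\int(F(\cdot,m_1)-F(\cdot,m_2))\,d(m_1-m_2)+\sum_{i=1}^{2}\int_0^T\!\!\int E_i\,dm_i=0,
\end{equation*}
where $E_i\geq 0$ is the Bregman divergence of $H(x,\cdot)$ at $(Du_i,Du_{3-i})$ (nonnegative by \ref{H5}), and the first integral is nonnegative by \ref{F1}. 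Hence each term is zero; local strict convexity in \ref{H5} forces $Du_1=Du_2$ on $\{m_1+m_2>0\}$, so $m_1=m_2$ by uniqueness of the FP equation (Proposition \ref{ex-FP}), and then $u_1=u_2$ by Theorem \ref{thm:well_posedness_HJB_parabolic}(b). The main technical hurdle is the passage to the limit in the Fokker-Planck equation: the drift ${\mathcal B}_R$ is unbounded in $R$ through $b_R$, so one cannot pass to the limit in the distributional formulation \rife{vwsol} directly. Lemma \ref{lem:comp}(c), phrased through the dual viscosity formulation of Definition \ref{def:dsol}, is tailored to overcome precisely this obstruction, and the whole scheme works provided the uniform boundedness of $D_pH(\cdot,Du_R)$ is secured via the Lipschitz estimate of Step 2.
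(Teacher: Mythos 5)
Your proposal is correct and follows essentially the same route as the paper's proof: truncate $b$, invoke the existence theory for bounded drift from \cite{ersland2020classical}, upgrade to uniform local $C^{1,2}$ bounds on $u_R$ via Proposition \ref{thm:HJB_well_posedness}, use Lemma \ref{lem:comp} for compactness of $m_R$ under the structure condition \rife{tb}, pass to the limit by stability of viscosity solutions and of the dual FP formulation, and prove uniqueness by the Lasry--Lions monotonicity computation. The only cosmetic difference is in the uniqueness step: you describe it as ``differentiating $t\mapsto\int(u_1-u_2)\,d(m_1-m_2)$,'' whereas the paper avoids any literal time-differentiation (which would require stronger joint regularity) by recasting the equation for $u_1-u_2$ as a linear equation with drift $b+D_pH(\cdot,Du_1)$ and using it directly as a test function in the duality formulation of Definition \ref{def:dsol} for $m_1$ (and symmetrically for $m_2$); your remark that the computation is ``justified by the duality framework of Section \ref{FP-sec}'' indicates you have the right mechanism in mind.
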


\begin{proof}
The proof can be summarized in the following  steps. \\
{\it Step 1.}\quad Since the drift $b_R$ is bounded, the truncated MFG problem \rife{eqn:parabolic_MFG_truncated} admits a (classical) solution $(u_R, m_R)$, as a consequence of the results in \cite{ersland2020classical}. We may eventually need to regularize $m_0, u_T, b$ in this Step, but this will not add further difficulty, so we will omit to introduce explicitly some $m_{0,R}, u_{T,R}$ to this purpose, or even a smoothed approximation  of $b$. We then use Proposition \ref{thm:HJB_well_posedness} to get estimates for $u_R, Du_R$; in particular, due to \ref{H1} and \ref{G}, we have that $u_R$ is uniformly bounded in $Q_T$ and satisfies
$$
 |  u_R ( t,x ) - u_R  ( t,y ) |   \leq C | x-y | + \frac{c_T}{\langle R \rangle^{\beta}} ( \langle x \rangle^{\beta} + \langle y \rangle^{\beta} )\quad \text{for} \quad x ,y \in B_{R},\ t \in  [ 0,T ).
 $$
In addition, using the local Lipschitz bound of $F(x,m_R)$ (from \ref{G}) and $u_T$ we have that $Du_R$ is locally uniformly bounded in  $Q_T$. By a standard argument in viscosity solutions' theory (see e.g. Lemma 4 in \cite{BBL}), the local Lipschitz bound in the $x$ variable also implies local equi-continuity in the time variable, and eventually the local compactness of $u_R(t,x)$ in the  uniform topology, namely $u_R$ is relatively compact in $C_b([0,T]\times U)$ for all compact subsets $U\subset \rd$.  
By a usual diagonal argument, we can construct a subsequence (not relabeled), such that  $u_R\to u$ locally uniformly in $\overline Q_T$, for some $u\in C_b(Q_T)$.  Reasoning in the same way,  using the equation satisfied by $Du_R$ and $W^{2,\infty}_{loc}$ estimates in space, we also obtain that $Du_R\to Du$ locally uniformly in $\overline Q_T$.
\\
{\it Step 2.}\quad After Step 1, we get that $b_R ( x )  +D_p H ( x, Du_R  )$ satisfies condition \rife{tb} with $b_1=b$ and $b_{2,R}=D_pH(x,Du_R(t,x))\to D_p H(x, Du(t,x))$ (locally uniformly). Hence, from Lemma \ref{lem:comp} we infer the compactness of $m_R$ and the stability of the FP equation; namely, there exists $m\in C([0,T];\cP_k(\R^d))$ such that $m_R\to m$ in $C([0,T];\cP_k(\R^d))$ and $m$ solves the equation with drift $b+D_p H(x, Du)$.
\\
{\it Step 3.}\quad To conclude, we observe that the convergence of $m_R(t)$ to $m(t)$ and assumption \ref{G} imply that $F(x,m_R(t))$ uniformly converges to $F(x,m(t))$.  By the stability of  viscosity solutions, we deduce that $u$ is a solution of \rife{mfg_HJB}.
\\
{\it Step 4.}\quad Assuming \ref{F1}, we now prove the uniqueness of solutions. We follow the classical argument introduced in  \cite[Theorem 2.4]{lasry2007mean}. 
Let $ ( u_{1}, m_{1} )$ and $ ( u_{2}, m_{2} )$ be two solutions of \eqref{eqn:parabolic_MFG}.
We write the HJB equations for $u_{1}$ and $u_{2}$ and subtract them; in particular, $u_1-u_2$ satisfies
\begin{align*}
 - &  \partial_t (u_1-u_2) - \mathcal{L} (u_1-u_2)    + (b(x)+ D_pH(x,Du_1)) \cdott D(u_1-u_2) \\ 
 & \quad =   F(x,m_1(t))-F(x,m_2(t))+ D_pH(x,Du_1) \cdott D(u_1-u_2)- (H(x, Du_1)- H(x,Du_2))
\end{align*}
where we observe that the right-hand side is continuous and bounded, due to \ref{G}, \ref{H1}--\ref{H6} and the regularity of $u_1, u_2$. Therefore, we can use the equation of $m_1$ according to Definition \ref{def:dsol}, obtaining
\begin{align*}
&   \int_{\R^d} (u_1(t)-u_2(t)) dm_1(t) = \int_{\R^d} (u_1(0)-u_2(0))  dm_0 + \int_0^t \int_{\R^d} [F(x,m_1)-F(x,m_2)]dm_1ds\\
&  +\int_0^t \int_{\R^d}\left\{ D_pH(x,Du_1) \cdott D(u_1-u_2)- (H(x, Du_1)- H(x,Du_2))\right\} dm_1  ds 
\end{align*}
Notice that the left-hand side vanishes for $t=T$, since there we have $u_1=u_2=u_T$. 
Next, we argue similarly for the equation of $m_2$, we subtract the results and we get
\begin{align*}
& \int_0^T \int_{\R^d} [F(x,m_1)-F(x,m_2)](dm_1-dm_2)ds\\
&  +\int_0^T \int_{\R^d}\left\{ D_pH(x,Du_1) \cdott D(u_1-u_2)- (H(x, Du_1)- H(x,Du_2))\right\} dm_1 ds 
\\ & -\int_0^T \int_{\R^d}\left\{ D_pH(x,Du_2) \cdott D(u_1-u_2)- (H(x, Du_1)- H(x,Du_2))\right\} dm_2 \, ds=0
\end{align*}
The convexity of $H$ and assumption \ref{F1} imply that the three above integrals are all equal to zero, and the strict convexity coming from \ref{H5} implies that $Du_1=Du_2$ both $m_1$ and $m_2$ almost everywhere. This means that $m_1$ and $m_2$ are solutions of the same Fokker-Planck equation, hence $m_1= m_2$ by Proposition \ref{ex-FP}.  As a consequence, $F(x,m_1)= F(x,m_2)$, and $u_1=u_2$ by uniqueness of viscosity solutions of the first equation.
\end{proof}

\subsection{Stationary ergodic system}

Our next step consists in showing the existence of a stationary solution  to the ergodic system.

\begin{thm}\label{exergo}
Assume \ref{nu'}, \ref{B}--\ref{B2}, \ref{H1}--\ref{H6}, \ref{G}.  Then there exists a triple $ (\lambda, \bar u, \bar m) \in \mathbb{R} \times  W^{1,\infty} (\rd) \times \cP _k(\rd)$, for $k \in (0, \sigma)$, which is a solution of  the stationary ergodic MFG system
\eqref{eqn:ergodic_MFG}. Moreover $\bar u\in C^1(\R^d)\cap W^{2,\infty}_{loc}(\R^d)$ and $\bar m\in L^1(\R^d)$, $\bar m>0$.
\end{thm}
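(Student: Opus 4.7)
The natural approach is a Schauder fixed-point argument on probability measures. Given $\mu\in\cP_k(\rd)$, I would define $\Phi(\mu)$ in two steps: first, invoke Theorem \ref{thm:well_posedness_HJB_ergodic} and Theorem \ref{thm:ergodic_double_derivative} to solve the ergodic HJB equation \eqref{eqn:ergodic_HJB} with forcing $F(\cdot,\mu)$, producing a pair $(\lambda[\mu],\bar u[\mu])\in\R\times(W^{1,\infty}\cap W^{2,\infty}_{\mathrm{loc}})(\rd)$ normalized by $\bar u[\mu](0)=0$; second, invoke Proposition \ref{ex-FP-stat} to solve \eqref{eqn:FFP_stationary} with drift $\mathcal B[\mu]:=b+D_pH(\cdot,D\bar u[\mu])$. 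A fixed point $\bar m=\Phi(\bar m)$, together with $(\lambda[\bar m],\bar u[\bar m])$, will solve \eqref{eqn:ergodic_MFG}. Note that because of \ref{H2} the correction $D_pH(\cdot,D\bar u[\mu])$ is uniformly bounded by $L_H$, so $\mathcal B[\mu]$ inherits the confining and one-sided regularity conditions \rife{bdiss}--\rife{disso} with constants independent of $\mu$, making Proposition \ref{ex-FP-stat} uniformly applicable.

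For the self-mapping property, I would test the stationary equation for $\Phi(\mu)$ against the Lyapunov function $\langle x\rangle^k$, exactly as in the proof of Lemma \ref{lem:comp}(a) but in stationary form, obtaining $\|\Phi(\mu)\|_{TV_k}\le M$ for an $M$ depending only on $\alpha,\beta,|b(0)|,L_H,\nu,k,\sigma,d$. Thus $\Phi$ sends the convex set $K:=\{\mu\in\cP_k(\rd):\|\mu\|_{TV_k}\le M\}$ into itself, and $K$ is relatively compact in $(\cP(\rd),d_0)$ by Prokhorov's theorem. For continuity, if $\mu_n\to\mu$ in $d_0$ inside $K$, then \ref{G} implies $F(\cdot,\mu_n)\to F(\cdot,\mu)$ uniformly; the uniform Lipschitz estimate of Theorem \ref{thm:well_posedness_HJB_ergodic}(a) and the local $W^{2,\infty}$ estimate of Theorem \ref{thm:ergodic_double_derivative}(b), combined with uniqueness of the ergodic constant (Theorem \ref{thm:well_posedness_HJB_ergodic}(b)) and Arzela-Ascoli, force $(\lambda_n,\bar u_n)\to(\lambda,\bar u)$ with $\bar u_n$ and $D\bar u_n$ converging locally uniformly. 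The drifts $\mathcal B[\mu_n]$ thus converge locally uniformly to $\mathcal B[\mu]$ with uniform bounds, and a stationary version of the stability argument in Lemma \ref{lem:comp}(c) — passing to the limit in the dual formulation \rife{duality-stat} and using uniqueness of the stationary invariant measure — gives $\Phi(\mu_n)\to\Phi(\mu)$ in $d_0$. Schauder's theorem then produces the desired fixed point.

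The regularity $\bar u\in C^1\cap W^{2,\infty}_{\mathrm{loc}}$ is automatic from Theorem \ref{thm:ergodic_double_derivative}. For the qualitative claims $\bar m\in L^1(\rd)$ and $\bar m>0$, I would run the whole fixed-point scheme with $\mathcal L$ replaced by $\mathcal L-\vep\Delta$: each approximate invariant measure $\bar m_\vep$ has a smooth positive density by standard elliptic regularity for the Fokker-Planck operator with Lipschitz drift, and the estimates of Section \ref{FP-sec} (which, as noted in Remark \ref{extensions}, are stable under the addition of a second-order term) let us pass to the limit $\vep\to 0$ and preserve the $L^1$-character of the density in $\cP_k$. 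Strict positivity $\bar m>0$ then follows from the irreducibility of the underlying L\'evy-plus-drift process, itself guaranteed by the two-sided bound in \ref{nu'}.

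The main obstacle I anticipate is the continuity of $\Phi$: it really needs the local uniform convergence of $D\bar u_n$ and not just of $\bar u_n$, so one is forced to exploit the full $W^{2,\infty}_{\mathrm{loc}}$ regularity of the ergodic HJB; and one must pass to the limit in the dual formulation of the stationary Fokker-Planck rather than in its distributional form, since $\mathcal B$ is unbounded. A secondary but nontrivial point is the upgrade from $\bar m\in\cP_k$ to $\bar m\in L^1$ with $\bar m>0$, which is not part of Proposition \ref{ex-FP-stat} and requires the vanishing-viscosity step sketched above (or, alternatively, a direct argument using the smoothing properties of the fractional heat semigroup in the evolutionary approach to $\bar m$).
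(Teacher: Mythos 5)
Your argument matches the paper's proof almost exactly: the same two-step Schauder map $T(\mu)$ (ergodic HJB for $(\lambda[\mu],\bar u[\mu])$ with forcing $F(\cdot,\mu)$, then stationary Fokker–Planck with drift $b+D_pH(\cdot,D\bar u[\mu])$), the same Lyapunov test with $\langle x\rangle^k$ for invariance of a moment-bounded convex set, Prokhorov compactness, and continuity obtained from the uniform $W^{2,\infty}_{\mathrm{loc}}$ bounds on $\bar u[\mu_n]$ together with passing to the limit in the dual formulation \eqref{duality-stat} of the stationary Fokker–Planck (the paper likewise emphasizes that one must argue through the dual formulation because $\mathcal B$ is unbounded). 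The only addition on your side is the sketched vanishing-viscosity argument for $\bar m\in L^1$, $\bar m>0$, which the paper's proof leaves implicit; your sketch is plausible but, as you yourself note, positivity does not pass trivially to weak limits and needs the irreducibility input you mention.
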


\begin{proof}
For a constant $C > 0$ to be defined later, we set
 \begin{align}
     \mathcal{C}:= \{ \mu \in \cP ( \R^d  )   :
     \int_{\R^d } \langle x  \rangle^{k} d\mu   \leq C\} 
     \label{def:set_C_fixed_point_stationary}
\end{align}
for $k \in ( 0, \sigma )$. 
The set $\cP ( \R^d  )$ is equipped with the metric $d_{0}$. The set $\mathcal{C}$ is a closed and convex subset of $\cP(\R^d)$. It is compact by the Prokhorov theorem, using 
the tightness of $\mu$.
We define a fixed point map from $\mathcal{C}$ into itself. Let $\mu \in \mathcal{C}$, and let $ ( \lambda,u )$  be the unique solution of
\begin{align*}
\begin{cases}
 \lambda - \mathcal{L} u + b ( x ) \cdott Du + H ( x,Du ) = F ( x, \mu ) , \\ 
  u ( 0 ) = 0
\end{cases}
\end{align*}
which exists by  Theorem \ref{thm:well_posedness_HJB_ergodic}.
By Theorem \ref{thm:ergodic_double_derivative} this solution is smooth and satifies $Du \in L^{\infty} (\rd) $ and 
$\|D^2 u \|_{L^{\infty} (B_r) } \leq C_r$.  
Let then $m = T ( \mu )$ be defined as the unique solution of the Fokker-Planck equation
\beq\label{mfg_FP_stat}
    - \mathcal{L}^* m  - \text{div} ( m (     b ( x )+ D_{p} H ( x,Du )  ) ) = 0. 
\eeq
given by Proposition \ref{ex-FP-stat}. We now check that $T$ actually maps $\mathcal{C}$ into itself and that it is  continuous.

(i)  $T$ is well-defined and $\mathcal{C}$ is invariant.  \\
\noindent According to the notion of solution introduced in Definition \ref{def:dsol}, we have 
$$
 0 = \int_{\rd}    ( -\mathcal{L} (x, \phi) + b \cdott D\phi + D_p H (x,Du) \cdott D\phi )dm
 $$
 for any sufficiently smooth function $\phi$. Since $m\in \cP_k$, by approximation we can take possibly unbounded $\phi\in L^\infty(\langle x\rangle^{-k})$. We take $\phi (x) = \langle x \rangle^{k}$, $k\in (0,\sigma)$; since $\| D_p H (Du) \|_{\infty} \leq L_H$ by \ref{H2}, we can use 
Lemma \ref{lemma:supersolution_property} and we deduce that 
\begin{align*}
    0 = \int_{\rd}   ( -\mathcal{L} (x, \phi) + b \cdott D\phi + D_p H (x,D u) \cdott D\phi )dm \geq \int_{\rd}   (\omega_0\phi (x) - K) dm, 
\end{align*}
for some $\omega_0, K>0$.  This yields  $\int_{\rd}  \langle x\rangle^k dm \leq K/{\omega_0}$. 
Thus the map $T: \mathcal{C} \to \mathcal{C}$ is well-defined for $C= \frac K{\omega_0}$. 
\\

(ii)  $T$ is continuous.  \\
\noindent 
Let $\mu_{n} \to \mu \in \mathcal{C}$ be a convergent sequence. By \ref{G}, $F(x,\mu_n)\to F(x,\mu)$ uniformly and is bounded  in $W^{1,\infty}$. Denoting $ ( \lambda_{n}, u_{n} )$ 
as the solution of corresponding HJB equation, by Theorem  \ref{thm:well_posedness_HJB_ergodic} we have uniform bounds  
\begin{align*}
    | \lambda_{n} | \leq C \qquad\text{and}\qquad \| u_{n}  \|_{W^{2,\infty} ( B_r )} \leq C_r,\ r>0.
\end{align*}
Thus $\lambda_{n} \subset \R$ is compact, and $u_{n} $ is compact in $C^{1, \delta}_{\textup{loc}}$ for every $\delta\in [0,1)$.
Let $ ( \lambda_{n_{k}} , u_{n_{k}} )$ be a convergent subsequence. Then, $Du_{n_{k}} \to Du$ locally uniformly. 
Furthermore, writing $\mathcal{L} u_{n_{k}} (x) = \mathcal{L}_1 u_{n_{k}} (x) + \mathcal{L}^1 u_{n_{k}} (x) $, we see that
$\mathcal{L}_1 u_{n_{k}} (x) \to \mathcal{L}_1 u (x)$ by uniform convergence in $C^{1, \beta} (B_1)$, while $\mathcal{L}^1 u_{n_{k}} (x) \to 
\mathcal{L}^1 u (x)$ by dominated convergence.
Thus, for every fixed $x \in \R^d $, 
\begin{align*}
    \lambda_{n_{k}} - \mathcal{L} u_{n_{k}}  + b ( x ) \cdott Du_{n_{k}} + H ( x,Du_{n_{k}} ) = F ( x, \mu_{n_k} )
\end{align*}
converges to
\begin{align*}
    \lambda - \mathcal{L} u   + b ( x ) \cdott Du + H ( x,Du ) = F ( x, \mu ) .
\end{align*}
In addition $u_{n_{k}} ( 0 ) \to u ( 0 ) = 0$. Since the solution $ ( \lambda,u )$ is unique,
we get full convergence $ (\lambda_n , u_n) \to (\lambda , u)$.

By definition of   $\mathcal{C} \subset \cP (\rd)$, $m_n= T(\mu_n)$ is tight, and therefore it is 
relatively compact in $\cP(\R^d)$ by the Prokhorov theorem.
Thus, there exists a subsequence $ ( m_{n_{k}} )$ and $m\in \mathcal{C} $ such that
\begin{align*}
    \int_{\R^d }  \xi \, dm_{n_k}  \to \int_{\R^d }   \xi \, dm \qquad \text{for all } \xi \in C_b ( \R^d  ).
\end{align*}
In order to show that $m$ is a solution according to our definition, we proceed as in Lemma \ref{lem:comp}. Namely, for  $f\in C_b(Q_t) $,  we consider the viscosity solution of
\begin{align}\label{fpdualR2}
 c_n - \mathcal L \phi_n  +   (b(x)+ D_pH(x,Du_n)) \cdott D \phi_n  = f & \quad \text{in} \,\,   \R^d, 
\end{align}
which exists for some $c_n\in \R$, by Theorem  \ref{thm:well_posedness_HJB_ergodic}.  Hence we have (see Remark \ref{form_FP-stat})
\begin{align*}
  \int_{\R^d} f\, dm_n  =c_n \,.
    \end{align*}
Now, the local uniform convergence of $D_pH(x,Du_n)$ towards $D_pH( x,Du)$  and the stability of viscosity solutions imply that $\phi_n, c_n$ are uniformly bounded and $\phi_n $ locally uniformly converges to  a viscosity solution $\phi$ of the equation
$$
c - \mathcal L \phi   +   (b(x)+ D_pH(x,Du )) \cdott D \phi   = f \qquad  \text{in} \quad    \R^d
$$  
where $c$ is the unique ergodic constant relative to $f$. Passing to the limit as $m_{n_k} \to m$ we deduce that 
\beq\label{barmeq}
  \int_{\R^d} f\, dm  =c \quad \forall (f,c)\,:\quad \exists \phi\,\,\hbox{sol. of}\quad c - \mathcal L \phi   +   (b(x)+ D_pH(x,Du )) \cdott D \phi   = f \,.
\eeq
This means that $m$ is the unique solution of \rife{mfg_FP_stat}, i.e. $m= T(\mu)$, and the whole sequence  $m_{n} \to m$ in $\cP ( \R^d  )$. Thus $T$ is continuous.
%
%
%

Applying Schauder's fixed point theorem concludes the proof.
\end{proof}

As in the time dependent case, assuming the Lasry-Lions monotonicity condition implies uniqueness for the stationary problem, as well. The proof is similar to the evolutionary case (Step 4 in Theorem \ref{ex-MFG}), up to using the stationary formulation \rife{barmeq} for the measure $m$  and the uniqueness of viscosity solutions to the ergodic problem, for the first equation. We omit the details, for shortness.

\begin{thm}
  Under the assumptions of Theorem \ref{exergo}, and the additional condition  \ref{F1}, there exists at most one  solution $ ( \lambda,u,m ) \in \R \times   ( W^{1,\infty} ( \R^d  ) \cap W_{\text{loc}}^{2,\infty} (\rd) )  \times \cP_k$ of \eqref{eqn:ergodic_MFG}. 
\end{thm}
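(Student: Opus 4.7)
The plan is to adapt the classical Lasry-Lions duality argument used in Step 4 of Theorem \ref{ex-MFG} to the stationary setting, replacing the time-dependent duality of Definition \ref{def:dsol} with the stationary dual formulation \rife{barmeq} of Remark \ref{form_FP-stat}. Given two solutions $(\lambda_1,u_1,m_1)$ and $(\lambda_2,u_2,m_2)$, I would first subtract the HJB equations and group the Hamiltonian difference as a linearization around $Du_1$: the function $u_1-u_2\in W^{1,\infty}\cap W^{2,\infty}_{\textup{loc}}$ then satisfies classically
\begin{equation*}
(\lambda_1-\lambda_2)-\cL(u_1-u_2)+(b(x)+D_pH(x,Du_1))\cdott D(u_1-u_2)=F(x,m_1)-F(x,m_2)+R_1(x),
\end{equation*}
with $R_1(x):=D_pH(x,Du_1)\cdott(Du_1-Du_2)-(H(x,Du_1)-H(x,Du_2))\ge 0$ by \ref{H5}. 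A symmetric linearization around $Du_2$ yields the analogous equation with drift $b+D_pH(x,Du_2)$ and right-hand side $F(x,m_1)-F(x,m_2)-R_2(x)$, where $R_2\ge 0$ is defined analogously.

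The next step is to test each of these identities against the corresponding stationary measure via \rife{barmeq}. Both right-hand sides are bounded and continuous thanks to \ref{G}, \ref{H6}, and the global Lipschitz bound on $Du_i$ from Theorem \ref{thm:ergodic_double_derivative}, so $(u_1-u_2)$ is an admissible (classical, hence viscosity) solution of each dual ergodic HJB with constant $\lambda_1-\lambda_2$. This produces the two identities
\begin{equation*}
\lambda_1-\lambda_2=\int_{\R^d}[F(x,m_1)-F(x,m_2)+R_1]\,dm_1=\int_{\R^d}[F(x,m_1)-F(x,m_2)-R_2]\,dm_2,
\end{equation*}
and subtracting them gives
\begin{equation*}
\int_{\R^d}[F(x,m_1)-F(x,m_2)]\,d(m_1-m_2)+\int_{\R^d}R_1\,dm_1+\int_{\R^d}R_2\,dm_2=0.
\end{equation*}
By the Lasry-Lions monotonicity \ref{F1}, all three terms are non-negative, so each vanishes individually.

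From $\int R_1\,dm_1=\int R_2\,dm_2=0$ and the strict convexity in \ref{H5}, I would conclude that $Du_1=Du_2$ both $m_1$- and $m_2$-almost everywhere, so that $m_1$ and $m_2$ solve the same stationary Fokker-Planck equation with drift $b+D_pH(x,Du_1)$. Uniqueness in Proposition \ref{ex-FP-stat} then forces $m_1=m_2$, hence $F(x,m_1)=F(x,m_2)$, both ergodic HJB equations coincide, and Theorem \ref{thm:well_posedness_HJB_ergodic}(b) combined with the normalization $u_i(0)=0$ finally gives $\lambda_1=\lambda_2$ and $u_1=u_2$. The main obstacle I anticipate is not the monotonicity manipulation itself but verifying that $u_1-u_2$ is genuinely admissible as a test function in \rife{barmeq}: one must ensure that $R_1$ and $R_2$ are bounded and continuous on all of $\R^d$ (not merely locally), which relies on the \emph{global} Lipschitz bound for $Du_i$ from Theorem \ref{thm:ergodic_double_derivative}(a) together with the uniform $C^2$ control of $H$ on bounded $p$ coming from \ref{H6}.
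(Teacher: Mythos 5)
Your proposal is correct and follows exactly the route the paper sketches (but omits): adapt the Lasry--Lions duality argument of Step 4 of Theorem \ref{ex-MFG}, replacing the parabolic dual pairing of Definition \ref{def:dsol} by the stationary formulation \rife{barmeq}, and conclude via Proposition \ref{ex-FP-stat} for the measure and Theorem \ref{thm:well_posedness_HJB_ergodic}(b) (comparison and uniqueness up to constants for the ergodic HJB) together with the normalization $\bar u(0)=0$. Your linearizations around $Du_1$ and $Du_2$, the identities $\int[F(m_1)-F(m_2)+R_1]\,dm_1=\lambda_1-\lambda_2=\int[F(m_1)-F(m_2)-R_2]\,dm_2$, and the admissibility check on $R_1,R_2$ (global Lipschitz bound on $Du_i$ plus \ref{H2}/\ref{H6} for boundedness and continuity) are precisely what is needed; the stationary version is in fact slightly cleaner than the parabolic one, since the boundary-in-time terms are replaced by the ergodic constant $\lambda_1-\lambda_2$ which cancels upon subtraction.
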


\section{The exponential turnpike estimate}\label{turnpi}

In this Section we prove our main result, namely the exponential turnpike estimate, in a long horizon $T$,  for the solutions of the MFG system \rife{eqn:parabolic_MFG}.
To this purpose, we need first to establish preliminary long time estimates which will serve for (a rough version of)  the linearized MFG system.

\subsection{Long time decay estimates for   drift-diffusion operators}

Here we summarize the estimates on linear  Hamilton-Jacobi and Fokker-Planck equations, which are needed to establish the convergence to equilibrium for the MFG system. We start by recalling the  following result   proved  in \cite{porretta2024decay}.
We recall the notations \rife{weisem}, \rife{tvk} used in what follows.

\begin{thm}\label{decayum}  Let $\cL$ be defined by \rife{def:nonlocal_diffusion}, where \ref{nu'} holds true.  Assume that ${\mathcal B}$ satisfies \rife{bdiss}--\rife{disso}.
Then the following holds:

(a) Let $v$ be a  (viscosity) solution of  
\beq\label{linhj}
\begin{cases}
-\partial_t v - \mathcal{L} v   + {\mathcal B}(t,x) \cdott Dv     =  0 &  \text{ in } ( 0,T ) \times \R^d, \\ 
v ( T,x ) = v_T (x),\qquad  \,&  \text{ in }   \R^d
  \end{cases}
\eeq
such that $|v(t,x)| \leq C (1+ |x|)^\gamma$ for some $\gamma<\sigma$, $C>0$.  

For any $0<k<\sigma$ such that $v_T \in L^\infty(\langle x\rangle^{-k})$, there exist  $K, \omega >0$ 
 (only depending on $\alpha_0, \beta_0, R, d, \sigma, k, c_{\sigma,d}$)  such that $v$ satisfies
\beq\label{decayk}
[v(t)]_{\langle x\rangle^k} \leq K \, e^{-\omega (T-t)} \, [v_T]_{\langle x\rangle^k}, \qquad t\in (0,T).
\eeq

\vskip0.5em
(b) Let $\mu$ be a solution of 
\beq\label{linfp}
\begin{cases}
\partial_t \mu - \mathcal{L}^* \mu   -\dive( {\mathcal B}(t,x) \mu)     =  0 &  \text{ in } ( 0,T ) \times \R^d, \\ 
\mu( 0,x ) = \mu_0 (x),\qquad  \,&  \text{ in }   \R^d
\end{cases}
\eeq
where  $\mu_0 \in  \cM_k(\rd) $, for some  $0<k<\sigma$, and satisfies  $\int_{\R^d} d\mu_0=0$.  
Then there exist   $K, \omega >0$ (as in item (a)) such that 
\beq\label{decaykm}
\|\mu(t)\|_{TV_k}\leq K \, e^{-\omega t} \, \|\mu_0\|_{TV_k}, \qquad t\in (0,T).
\eeq
\end{thm}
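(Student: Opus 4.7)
Both items are quoted from \cite{porretta2024decay}, so my plan is to outline the essential ingredients and explain how (b) is obtained from (a) by duality.

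For part (a), the key object is the Lyapunov weight $\varphi(x) := \langle x\rangle^k$ with $k \in (0,\sigma)$. Using \ref{nu'} to control $\mathcal{L}\varphi$ (a direct splitting of the nonlocal integral near and far from the singularity shows that $\mathcal{L}\varphi$ is bounded and in fact decays like $\langle x\rangle^{k-2}$ at infinity, exactly as in Lemma \ref{lemma:supersolution_property}) together with \rife{bdiss} to estimate $\mathcal{B}(t,x)\cdot D\varphi(x) \geq k\alpha_0\,\langle x\rangle^{k-2}|x|^2 - C'$ for $|x|\geq R$, one obtains a Lyapunov inequality
\begin{equation*}
-\mathcal{L}\varphi(x) + \mathcal{B}(t,x)\cdot D\varphi(x) \;\geq\; \omega_0\,\varphi(x) - K_0 \qquad \text{on } (0,T)\times\R^d,
\end{equation*}
for suitable $\omega_0, K_0 > 0$ depending only on the parameters listed in the statement. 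On top of this, the plan is to run a doubling-variables argument for the auxiliary function
\begin{equation*}
\Phi(t,x,y) := v(t,x) - v(t,y) - \theta(t)\bigl(\langle x\rangle^k + \langle y\rangle^k\bigr),
\end{equation*}
with $\theta(t) := K\, e^{-\omega(T-t)}\,[v_T]_{\langle x\rangle^k}$ for a rate $\omega \leq \omega_0$ and a multiplicative constant $K$ large enough to absorb the additive $K_0$. By definition of the weighted oscillation seminorm, $\Phi(T,\cdot,\cdot)\leq 0$ as soon as $K\geq 1$. The aim is to propagate this inequality backward in time and conclude $\Phi\leq 0$ on $[0,T]\times\R^d\times\R^d$, which is exactly \rife{decayk}. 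Assuming by contradiction a positive interior maximum at some $(t_0,x_0,y_0)$ with $t_0<T$ (after adding a small penalization such as $-\varepsilon(\langle x\rangle^{k'}+\langle y\rangle^{k'})$ with $k'<\sigma$ strictly greater than the growth exponent $\gamma$ of $v$, so that the maximum is attained), I would write the viscosity sub/supersolution inequalities of \eqref{linhj} at $x_0$ and $y_0$, subtract them, and plug in the Lyapunov lower bound above; after a careful splitting of the nonlocal operator into near-field and far-field parts (so that the far-field contribution is controlled by the oscillation of $v$ and absorbed into $\theta$), this produces an ODE-type inequality $-\theta'(t_0)+\omega_0\theta(t_0)\leq 0$ modulo terms vanishing as $\varepsilon\to 0$, which contradicts the explicit definition of $\theta$.

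For part (b), the argument is pure duality. For any test function $\eta\in C_b(\R^d)$ with $|\eta(x)|\leq \langle x\rangle^k$ (so that automatically $[\eta]_{\langle x\rangle^k}\leq 1$), let $\phi$ be the viscosity solution of the backward problem \eqref{linhj} with terminal datum $\eta$. Applying Definition \ref{def:dsol} to $\mu$, and using $\int d\mu_0=0$ to subtract any constant $c\in\R$ from $\phi(0,\cdot)$,
\begin{equation*}
\int_{\R^d}\eta\,d\mu(t) \;=\; \int_{\R^d}\phi(0,\cdot)\,d\mu_0 \;=\; \int_{\R^d}\bigl(\phi(0,\cdot)-c\bigr)\,d\mu_0.
\end{equation*}
Taking $c:=\phi(0,0)$ and invoking part (a),
\begin{equation*}
|\phi(0,x)-c| \;\leq\; [\phi(0)]_{\langle x\rangle^k}\bigl(\langle x\rangle^k + 1\bigr) \;\leq\; 2K\,e^{-\omega t}\,[\eta]_{\langle x\rangle^k}\,\langle x\rangle^k \;\leq\; 2K\,e^{-\omega t}\,\langle x\rangle^k.
\end{equation*}
Consequently $\bigl|\int \eta\,d\mu(t)\bigr| \leq 2K\,e^{-\omega t}\,\|\mu_0\|_{TV_k}$, and taking the supremum over such $\eta$ (which realizes the weighted total variation norm $\|\mu(t)\|_{TV_k}$ on $C_b$ by a standard approximation of the polar decomposition of $\mu(t)$) yields \rife{decaykm} with constant $2K$.

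The main obstacle is clearly the doubling-variables step in (a): making it rigorous in the viscosity framework requires a correctly chosen polynomial penalization to attain the maximum under the merely polynomial growth $|v|\leq C(1+|x|)^\gamma$, a splitting of $\mathcal{L}$ so that both the singular and far-field pieces can be controlled by the Lyapunov term $\omega_0\varphi$ or by the candidate oscillation $\theta(\langle x\rangle^k+\langle y\rangle^k)$, and a careful tracking of constants to guarantee $T$-independence. Once (a) is settled, (b) is a bookkeeping step built on the dual formulation of the Fokker--Planck equation and the identification of $\|\cdot\|_{TV_k}$ as the dual of the class $\{|\eta|\leq \langle x\rangle^k\}$.
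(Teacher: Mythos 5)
The paper itself does not prove Theorem \ref{decayum}: both items are imported verbatim from \cite{porretta2024decay}, so there is no in-paper argument to measure your reconstruction against. Your part (b) is nevertheless a clean and correct derivation: the dual pairing against the backward linear problem, the subtraction of a constant licensed by $\int d\mu_0=0$, the identification \eqref{hairer}, and the duality between $\|\cdot\|_{TV_k}$ and the class $\{|\eta|\leq\langle x\rangle^k\}$ (after the extension of Definition \ref{def:dsol} to unbounded test data mentioned after Proposition \ref{ex-FP}) is exactly the kind of bookkeeping the paper itself performs in Propositions \ref{lem72} and \ref{estm}, only there in the opposite direction (from the $\mu$-decay to the $v$-decay).

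The sketch of part (a), however, does not close as written. The Lyapunov inequality of Lemma \ref{lemma:supersolution_property} reads $-\mathcal{L}\varphi+\mathcal{B}\cdot D\varphi\geq\omega_0\varphi-K_0$ with an additive defect $K_0$ that is a priori unrelated to (and typically much larger than) $\omega_0$. If you run the doubling argument with $\Phi(t,x,y)=v(t,x)-v(t,y)-\theta(t)(\varphi(x)+\varphi(y))$ at an interior positive maximum, the maximality condition transfers directly to the nonlocal terms (one gets $\mathcal{L}v(x_0)-\mathcal{L}v(y_0)\leq\theta(\mathcal{L}\varphi(x_0)+\mathcal{L}\varphi(y_0))$ at the max), so the viscosity inequalities yield
\begin{equation*}
\theta'(t_0)\bigl(\varphi(x_0)+\varphi(y_0)\bigr)\ \geq\ \theta(t_0)\bigl[\omega_0(\varphi(x_0)+\varphi(y_0))-2K_0\bigr],
\end{equation*}
and dividing by $\varphi(x_0)+\varphi(y_0)\geq 2$ only gives $\theta'\geq(\omega_0-K_0)\theta$, which does not contradict $\theta'=\omega\theta$ unless $\omega_0>K_0$. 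The Lyapunov bound therefore controls the oscillation only when the maximum sits far out ($\varphi(x_0)+\varphi(y_0)\gg K_0/\omega_0$); on bounded sets one needs a genuine contraction coming from the non-degeneracy \ref{nu'} of the diffusion, i.e.\ the minorization/local mixing ingredient of Harris-type theorems (cf.\ \cite{hairer2011yet}, which the paper cites for the seminorm equivalence). In the paper's own toolbox this is precisely the role of the rotational compensation $z\mapsto Az$ in Lemma \ref{Estimates_non_local_operator}, which produces a strictly negative $\psi''$-contribution from the near-field part of $\mathcal{L}$. Your phrase \emph{``the far-field contribution is controlled by the oscillation of $v$ and absorbed into $\theta$''} treats the nonlocal term as a nuisance to be absorbed, whereas its near-field part is actually the \emph{source} of the spectral gap on compacta; without it the ODE inequality you claim is simply not obtained.
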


\vskip1em

The above estimates say, roughly speaking,  that the drift-diffusion operator generated by linearizing the MFG system (around one solution $(\bar u, \bar m)$) decays exponentially. This is observed in weighted total variation norms for  $m$, and in weighted oscillation estimates for   $u$. In order to spend those estimates
for the MFG setting, we will need to upgrade and refine those decay estimates. In fact, we need both to consider similar estimates for the nonhomogeneous problems, and to 
extend the decay to $Dv$. This is done by matching \rife{decayk} with the local regularizing effect of Proposition \ref{regeff}.

We recall  (see \cite{hairer2011yet}) the following equivalence of seminorms:
\beq\label{hairer}
\inf\limits_{c\in \R} \|u+c\|_{L^\infty(\langle x\rangle^{-k})}= [u]_{\langle x\rangle^{-k}} 
\eeq
and we start with a kind of  Duhamel's formula for the decay of $v$ and $Dv$.

%

\begin{proposition}\label{lem72}  

Let $\cL$ be defined by \rife{def:nonlocal_diffusion}, where \ref{nu'} holds true.    Assume that ${\mathcal B}\in C(Q_T)$ satisfies \rife{bdiss}--\rife{disso},   that $f\in L^\infty((0,T);L^\infty(\langle x\rangle^{-k})$, and $v_T\in L^\infty(\langle x\rangle^{-k})$ for some $k<\sigma$.

Let $v$ be a  (viscosity) solution of  
\beq\label{linhj}
\begin{cases}
-\partial_t v - \mathcal{L} v + {\mathcal B}(t,x) \cdott Dv    =  f &  \text{ in } ( 0,T ) \times \R^d, \\ 
v (T ) = v_T (x),\qquad  \,&  \text{ in }   \R^d
  \end{cases}
\eeq
such that $|v(t,x)| \leq C (1+ |x|)^\gamma$ for some $\gamma<\sigma$, $C>0$.  

Then there exist  $K, \omega >0$ (only depending on $\alpha_0, \beta_0, R, d, \sigma, k, c_{\sigma,d}$) such that
\beq\label{duhamel-u}
[v(t)]_{\langle x^k\rangle} \leq  K\, e^{-\omega (T-t)}  [v_T]_{\langle x^k\rangle} +  K \int_t^T  e^{-\omega(\tau-t)}[f(\tau)]_{\langle x^k\rangle}d\tau 
\eeq
and
\beq\label{duhamel-Du}
\begin{split}
\| Dv(t)\|_{L^\infty(\langle x\rangle^{-k} )}  & \leq  K \,e^{-\omega (T-t)}[v_T]_{\langle x^k\rangle} 
+  K \int_t^T  e^{-\omega(\tau-t)}[f(\tau)]_{\langle x^k\rangle}d\tau  \\ 
& \qquad 
+ K \left(\sup_{s\in [t,t+1]} [f(s)]_{\langle x^k\rangle}\right)
\qquad \forall t\in (0, T-1)\,.
\end{split}
\eeq
In addition, if  $Dv_T\in L^\infty(\langle x\rangle^{-k}dx)$, we also have
\beq\label{dvT}
\|Dv(t) \|_{L^\infty(\langle x\rangle^{-k})} \leq \|Dv_T \|_{L^\infty(\langle x\rangle^{-k})} + K \, (T-t)^{1-\frac1\sigma}\sup_{s\in [T-1,T]}[f(s)]_{\langle x^k\rangle} \qquad \forall t\in (T-1,T).
\eeq
%
%
\end{proposition}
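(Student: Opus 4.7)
The approach for all three estimates rests on the Duhamel representation, combined with the homogeneous decay of Theorem~\ref{decayum}(a) and a local-in-time gradient regularization from Proposition~\ref{regeff}. Let $S(t,\tau)$ denote the (backward) solution operator of the homogeneous equation on $[t,\tau]$ with terminal data at $\tau$; by linearity one has
\begin{equation*}
v(t)=S(t,T)v_T+\int_t^T S(t,\tau)f(\tau)\,d\tau.
\end{equation*}

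For \eqref{duhamel-u}, the seminorm $[\cdot]_{\langle x^k\rangle}$ is subadditive and annihilates constants, and each of $S(t,T)v_T$ and $S(t,\tau)f(\tau)$ solves a homogeneous backward problem. Theorem~\ref{decayum}(a) applied to each gives $[S(t,T)v_T]_{\langle x^k\rangle}\leq Ke^{-\omega(T-t)}[v_T]_{\langle x^k\rangle}$ and $[S(t,\tau)f(\tau)]_{\langle x^k\rangle}\leq Ke^{-\omega(\tau-t)}[f(\tau)]_{\langle x^k\rangle}$; integrating the second in $\tau$ and summing yields \eqref{duhamel-u}. A mild point to check is that the growth hypothesis $|v|\leq C(1+|x|)^\gamma$ propagates to both Duhamel pieces so that Theorem~\ref{decayum}(a) is legitimately applicable.

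For \eqref{duhamel-Du}, valid on $(0,T-1)$, I would apply the local regularizing effect of Proposition~\ref{regeff} on the unit window $[t,t+1]$ to the equation \eqref{linhj} restricted there, viewing $v(t+1)$ as terminal data. Using the equivalence \eqref{hairer} (the gradient is blind to additive constants, so weighted $L^\infty$ norms of $v(t+1)$ may be replaced by $[v(t+1)]_{\langle x^k\rangle}$), this yields
\begin{equation*}
\|Dv(t)\|_{L^\infty(\langle x\rangle^{-k})}\leq C\,[v(t+1)]_{\langle x^k\rangle}+C\sup_{s\in[t,t+1]}[f(s)]_{\langle x^k\rangle}.
\end{equation*}
Feeding the already-proved \eqref{duhamel-u} at time $t+1$ into the first term produces \eqref{duhamel-Du}, the cosmetic loss $e^{\omega}$ from the shift $(T-t-1)\mapsto(T-t)$ being absorbed into $K$.

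For \eqref{dvT} on $(T-1,T)$, the unit-length window no longer fits, so I would treat the two Duhamel pieces separately with short-time arguments. For the terminal piece, I would use a maximum-principle estimate in the weighted $L^\infty(\langle x\rangle^{-k})$ norm for $D(S(t,T)v_T)$: differentiating the homogeneous equation in $x$ produces the equation for $Dv$ with commutator term $(D{\mathcal B})\cdot Dv$, and on the short interval $[T-1,T]$ the confinement \eqref{bdiss}--\eqref{disso} suffices to give $\|D S(t,T)v_T\|_{L^\infty(\langle x\rangle^{-k})}\leq\|Dv_T\|_{L^\infty(\langle x\rangle^{-k})}$ with a clean constant $1$. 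For the source piece, Proposition~\ref{regeff} applied on the short window $[t,\tau]$ yields the fractional regularization
\begin{equation*}
\|D\,S(t,\tau)g\|_{L^\infty(\langle x\rangle^{-k})}\leq C(\tau-t)^{-1/\sigma}\,[g]_{\langle x^k\rangle},
\end{equation*}
and integrating against $\sup_{[T-1,T]}[f]_{\langle x^k\rangle}$ together with the elementary identity $\int_t^T(\tau-t)^{-1/\sigma}d\tau=\tfrac{\sigma}{\sigma-1}(T-t)^{1-1/\sigma}$ gives \eqref{dvT}. The main obstacle I expect is precisely securing the clean constant $1$ in the terminal contribution: one has to control the commutator $(D{\mathcal B})\cdot Dv$ through the dissipation and the decay inherent to the weight, rather than sweep it into an exponentially growing factor that would force a constant $K$ instead of $1$.
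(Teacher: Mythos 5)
Your overall architecture (exponential decay of the homogeneous part, plus a local-in-time gradient regularization) matches the paper's, and the argument for \eqref{duhamel-Du} is exactly theirs: apply Proposition \ref{regeff} on $[t,t+1]$ to get \eqref{tt+1} and then insert \eqref{duhamel-u}. The other two estimates diverge.

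For \eqref{duhamel-u} you write a Duhamel decomposition $v(t)=S(t,T)v_T+\int_t^T S(t,\tau)f(\tau)\,d\tau$ and feed each piece into Theorem \ref{decayum}(a). The paper instead works on the dual side: it builds a zero-average measure $\xi=\langle x\rangle^{-k}\mathds{1}_B-\lambda\nu$ from an arbitrary Borel set $B$, propagates it through the Fokker--Planck equation \eqref{traslo1}, applies Theorem \ref{decayum}(b) for the decay of $\|\mu(\tau)\|_{TV_k}$, and reads off the weighted oscillation of $v(t)$ through \eqref{hairer} by letting $B$ vary. Both are ``Duhamel in different variables,'' but the duality route avoids having to justify that an explicit variation-of-constants formula is legitimate for viscosity solutions of \eqref{linhj} when $\mathcal B$ is unbounded and only continuous (well-definedness of $S(t,\tau)$ on the relevant class, measurability in $\tau$, compatibility with the growth restriction $|v|\lesssim\langle x\rangle^\gamma$ that you correctly flag). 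It is also the formulation that later integrates naturally into Proposition \ref{estm} and the turnpike proof, so it is not merely a stylistic choice.

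For \eqref{dvT} your proposal has a real gap, and it is precisely the one you suspect. You want to differentiate \eqref{linhj} in $x$, obtain the equation $-\partial_t Dv-\cL Dv+\mathcal B\cdot D(Dv)+(D\mathcal B)^{\mathsf T}Dv=Df$, and get the clean constant $1$ on $\|Dv_T\|$ by a weighted maximum principle. Two things go wrong. First, under the standing hypotheses $\mathcal B$ is only continuous (it satisfies the one-sided conditions \eqref{bdiss}--\eqref{disso}), so $D\mathcal B$ need not exist and the differentiated equation is not available. Second, even when $\mathcal B$ is smooth, \eqref{disso} only controls the symmetric part $D\mathcal B+D\mathcal B^{\mathsf T}\geq-\beta_0$ in a weak sense; the commutator $(D\mathcal B)^{\mathsf T}Dv$ is a genuine matrix-valued zeroth-order term that mixes the components of $Dv$, and its operator norm is not bounded by the dissipativity constants. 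A Gr\"onwall estimate then produces a factor of size $e^{c\,\beta_0(T-t)}$, not $1$. (Concretely: $\mathcal B(x)=Ax$ with $A+A^{\mathsf T}\geq 0$ but $A$ strongly non-normal gives $\|e^{-(T-t)A^{\mathsf T}}\|>1$.) The paper's route never differentiates the equation: Remark \ref{lipda0} with \eqref{estlipda0} runs the doubling-of-variables construction of Proposition \ref{regeff} with a concave $\psi$ and weighted-Lipschitz terminal data, so that the constant $1$ on $\|Dv_T\|$ and the factor $(T-t)^{1-1/\sigma}$ on the source both come from the explicit choices of $K$, $\de$, and $\psi$ in that proof, and $D\mathcal B$ never appears.
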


\begin{remark} We stress that the constant $C$  of the growth condition of $v$ (i.e. such that $|v(t,x)| \leq C (1+ |x|)^\gamma$) does not affect the estimates \rife{duhamel-u}--\rife{dvT}. The growth condition on $v$ is only needed to justify the application of  maximum principle in the class of given solutions. 
\end{remark}

\begin{proof}   Let us show \rife{duhamel-u}. To this purpose, we take a unit measure $\nu$ on $\R^d$ which has positive density and such that $\int_{\R^d}\langle x\rangle^k d\nu <\infty$ (e.g.  $\nu$ is  a normalized Gaussian). For any bounded Borel set $B$  we define
$$
\xi:= \frac1{\langle x\rangle^k}\, \dschi_B - \lambda \, \nu\,, \qquad \hbox{where $\lambda= \int_B \frac1{\langle x\rangle^k}\, dx$.}\,   
$$
We notice that $\xi \in L^1(\langle x\rangle^{k})$,  $\int_{\R^d} \xi\, dx = 0$ (because $\nu$ has unit mass) and we estimate
\beq\label{lem721}
 \|\xi\|_{L^1(\langle x\rangle^{k})} \leq |B|+ \lambda\, \int_{\R^d}\langle x\rangle^k d\nu   \leq (1+ C) |B|
\eeq
for some $C$ independent of $B$.  Then we solve the Fokker-Planck equation 
\beq\label{traslo1}
\begin{cases}
\partial_t \mu - \mathcal{L}^* \mu   -\dive( {\mathcal B}(t,x) \mu)     =  0 &  \text{ in } ( t,T ) \times \R^d, \\ 
\mu( t,x ) = \xi (x),\qquad  \,&  \text{ in }   \R^d
  \end{cases}
\eeq
and we deduce, by duality,
$$
\int_{\R^d} v(t) \,\xi\,dx =\int_{\R^d} v_T\, d\mu(T)   + \int_t^T \int_{\R^d} f\, d\mu(\tau) d\tau\,.
$$
Since $\mu$ has zero average, this implies, for any constant $c_1, c_2\in \R$
\begin{align*}
\int_{\R^d} v(t) \,\xi\,dx  & = \int_{\R^d} (v_T +c_1) \, d\mu(T)  + \int_t^T \int_{\R^d} (f+c_2)\, d\mu(\tau) d\tau
\\ & \leq \|v_T +c_1\|_{L^\infty(\langle x\rangle^{-k})} \|\mu(T)\|_{TV_k} +  \int_t^T   \|f+c_2\|_{L^\infty(\langle x\rangle^{-k})} \|\mu(\tau)\|_{TV_k}  d\tau
\end{align*}
Taking infimum on $c_1, c_2$, and using \rife{hairer},   we deduce
 \begin{align*}
\int_{\R^d} v(t) \,\xi\,dx  &  \leq [v_T]_{\langle x^k\rangle} \|\mu(T)\|_{L^1(\langle x\rangle^{k})} +  \int_t^T  [f(\tau)]_{\langle x^k\rangle}\|\mu(\tau)\|_{TV_k}  d\tau
\\ & \leq \left( K\, e^{-\omega (T-t)}  [v_T]_{\langle x^k\rangle} +  K \int_t^T  e^{-\omega(\tau-t)}[f(\tau)]_{\langle x^k\rangle}d\tau \right)  \|\xi\|_{L^1(\langle x\rangle^{k})}
\end{align*}
where, in the last step,  we used \rife{decaykm} for problem \rife{traslo1}. Replacing $\xi$ by $-\xi$ in the argument above yields to
$$
\left | \int_{\R^d} v(t) \,\xi\,dx \right| \leq K  \left( e^{-\omega (T-t)}  [v_T]_{\langle x^k\rangle} +    \int_t^T  e^{-\omega(\tau-t)}[f(\tau)]_{\langle x^k\rangle}d\tau \right)  \|\xi\|_{L^1(\langle x\rangle^{k})}.
$$
The last term is now estimated with \rife{lem721}, while on the left side we  have
$$
\int_{\R^d} v(t) \,\xi\,dx= \int_{B} (v(t) + c_t ) \frac1{\langle x\rangle^k}\,dx \qquad \hbox{for $c_t= - \int_{\R^d} v(t)d\nu$.}
$$
Hence we deduce
$$
\left | \frac1{|B|}\int_{B} (v(t) + c_t ) \frac1{\langle x\rangle^k}\,dx  \right| \leq   (1+ C) K \left(  e^{-\omega (T-t)}  [v_T]_{\langle x^k\rangle} +   \int_t^T  e^{-\omega(\tau-t)}[f(\tau)]_{\langle x^k\rangle}d\tau \right) \,. 
$$
Since this holds for all Borel sets $B$, we deduce  
$$
\| v(t)+ c_t \|_{L^\infty(\langle x\rangle^{-k})} \leq (1+ C) K \left(  e^{-\omega (T-t)}  [v_T]_{\langle x^k\rangle} +    \int_t^T  e^{-\omega(\tau-t)}[f(\tau)]_{\langle x^k\rangle}d\tau\right)\,.
$$
On account of \rife{hairer}, this implies \rife{duhamel-u}, for a possibly different constant $K$. 

In order to get \rife{duhamel-Du}, we apply Proposition \ref{regeff} (in the time interval $(t, t+1)$), which yields
\beq\label{tt+1}
\| Dv(t)\|_{L^\infty(\langle x\rangle^{-k} )} \leq  K\, \sup_{s\in [t,t+1]} [v(s)]_{\langle x\rangle^k} + K\,  \sup_{s\in [t,t+1]} [f(s)]_{\langle x\rangle^k} 
\eeq
where, as before, $K$ is some generic constant only depending on $\alpha_0, \beta_0, R, d, \sigma, k, c_{\sigma,d}$. Using \rife{duhamel-u}, we have 
$$
\sup_{s\in [t,t+1]} [v(s)]_{\langle x\rangle^k} \leq c_\omega K \left(  e^{-\omega (T-t)}  [v_T]_{\langle x^k\rangle} +    \int_t^T  e^{-\omega(\tau-t)}[f(\tau)]_{\langle x^k\rangle}d\tau\right),
$$
hence \rife{tt+1} implies \rife{duhamel-Du}, for a possibly different $K$.
Finally, the estimate \rife{dvT} follows from \rife{estlipda0} in Remark \ref{lipda0}. 
\end{proof}

It will also be crucial, in order to handle the MFG system, to get corresponding estimates for  the nonhomogeneous Fokker-Planck equation
\beq\label{fpnonhom}
\begin{cases}
\partial_t \mu - \mathcal{L}^* \mu   -\dive( {\mathcal B}(t,x) \mu)     =  \dive (\bar m \,\Phi) &  \text{ in } ( 0,T ) \times \R^d, \\ 
\mu( 0,x ) = \mu_0 (x),\qquad  \,&  \text{ in }   \R^d\,.
  \end{cases}
\eeq
In \rife{fpnonhom}, we assume that $\bar m \in \cP_{2k}(\rd)$ and $\Phi\in L^\infty(\langle x\rangle^{-k})$, for some $k>0$. Solutions of \rife{fpnonhom} are meant in the duality sense as in Definition \ref{def:dsol}.
 
 \begin{proposition}\label{estm} Let $\cL$ be defined by \rife{def:nonlocal_diffusion}, where \ref{nu'} holds true.  Assume that $ \mu_0 \in \cM_k$ for some $k<\sigma$, with $\int_{\R^d} d\mu_0=0$,  that ${\mathcal B}\in C(Q_T)$  satisfies \rife{bdiss}--\rife{disso},  $\bar m \in \cP_{2k}$ and   $\Phi\in L^\infty(\langle x\rangle^{-k})$.  Then,  there exist positive constants $K, \omega >0$ (only depending on $\alpha_0, \beta_0, d, \sigma, k, c_{d,\sigma}, \|\bar m\|_{TV_{2k}}$) such that any  solution $\mu$ of \rife{fpnonhom} satisfies
\beq\label{estm1}
\begin{split}
\|\mu (t)\|_{TV_k} & \leq K \, \left\{ e^{-\omega t} \, \|\mu_0\|_{TV_k} + \de^{1-\frac1\sigma}\sup_{s\in [(t-\de)_+,t]}  \|\Phi(s)\|_{ L^\infty(\langle x\rangle^{-k})} \right.
\\
& \left. \qquad\qquad \qquad + \de^{\frac12-\frac1\sigma} \left( \int_0^{(t-\de)_+} \int_{\R^d}  |\Phi(s)|^2\, d\bar m ds \right)^{\frac12}\right\}
\end{split}
\eeq
for any $\de>0$.

In addition, for any $\gamma\in (1,\sigma)$ we have (denoting $\gamma'= \gamma/(\gamma-1)$)
\beq\label{estm2}
\begin{split}
\int_0^T \|\mu(t)\|_{TV_k}^{\gamma'} dt  & \leq C \left\{ \| \mu_0\|_{TV_k}^{\gamma'} \right.
\\
& \quad\left. +  \left( \sup_{s\in [0,T]} \|\Phi(s)\|_{ L^\infty(\langle x\rangle^{-k})}\right)^{\gamma'-2}  \int_0^T \int_{\R^d} |\Phi(t)|^2\, d\bar m dt  \right\}
\end{split}
\eeq
for some constant $C$ depending on $\gamma, \alpha_0, \beta_0, d, \sigma, k, c_{d,\sigma}, \|\bar m\|_{TV_{2k}}$.  
\end{proposition}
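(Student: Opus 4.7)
The proof of both \rife{estm1} and \rife{estm2} proceeds by duality with the backward dual problem
\[
-\partial_s\phi-\mathcal L\phi+\mathcal B(s,x)\cdot D\phi=g\text{ in }(0,\tau)\times\R^d,\qquad\phi(\tau,\cdot)=\xi.
\]
Testing \rife{fpnonhom} against $\phi$ (in the spirit of Definition \ref{def:dsol}, now carrying the extra source $\dive(\bar m\,\Phi)$) yields the identity
\[
\int_{\R^d}\xi\,d\mu(\tau)=\int_{\R^d}\phi(0)\,d\mu_0-\int_0^\tau\!\!\int_{\R^d}g\,d\mu\,ds-\int_0^\tau\!\!\int_{\R^d}D\phi\cdot\bar m\,\Phi\,dx\,ds.
\]
Because $\mu_0$ and $\mu(\tau)$ both carry zero mass, the boundary pairings are invariant under adding constants to $\xi$ or $\phi(0)$, so by \rife{hairer} only the oscillation seminorms $[\,\cdot\,]_{\langle x^k\rangle}$ enter the bounds, and $\|\mu(\tau)\|_{TV_k}$ coincides (up to a fixed constant) with the supremum of $\int\xi\,d\mu(\tau)$ over $[\xi]_{\langle x^k\rangle}\le 1$.

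For \rife{estm1} I take $g=0$, $\tau=t$ and arbitrary $\xi$ with $[\xi]_{\langle x^k\rangle}\le 1$. The decay estimate \rife{decayk} applied to $\phi$ bounds $|\int\phi(0)\,d\mu_0|$ by $Ke^{-\omega t}\|\mu_0\|_{TV_k}$. The source integral is split as $\int_0^t=\int_0^{(t-\delta)_+}+\int_{(t-\delta)_+}^t$. On the recent window I estimate $\int|D\phi||\Phi|\,d\bar m\le \|\Phi\|_{L^\infty(\langle x\rangle^{-k})}\|D\phi\|_{L^\infty(\langle x\rangle^{-k})}\|\bar m\|_{TV_{2k}}$ and combine the regularizing effect (Proposition \ref{regeff}, applied on a slab of size $(t-s)/2$) with \rife{decayk} to obtain $\|D\phi(s)\|_{L^\infty(\langle x\rangle^{-k})}\le K(t-s)^{-1/\sigma}[\xi]_{\langle x^k\rangle}$; integrating $(t-s)^{-1/\sigma}$ over an interval of length $\le\delta$ produces the factor $\delta^{1-1/\sigma}$. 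On the distant window I apply Cauchy--Schwarz in $d\bar m\,ds$, together with the sharper bound $\|D\phi(s)\|_{L^\infty(\langle x\rangle^{-k})}\le K(t-s)^{-1/\sigma}e^{-\omega(t-s)/2}[\xi]_{\langle x^k\rangle}$, to get $\bigl(\int_0^{t-\delta}\|D\phi\|_{L^\infty(\langle x\rangle^{-k})}^2 ds\bigr)^{1/2}\le K\delta^{1/2-1/\sigma}[\xi]_{\langle x^k\rangle}$, where $\int_\delta^\infty u^{-2/\sigma}e^{-\omega u}du\lesssim\delta^{1-2/\sigma}$ exploits the condition $2/\sigma>1$. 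Taking the supremum in $\xi$ produces \rife{estm1}.

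For \rife{estm2} I use $L^{\gamma'}$--$L^\gamma$ duality: $(\int_0^T\|\mu\|_{TV_k}^{\gamma'}dt)^{1/\gamma'}$ equals the supremum of $\int_0^T\!\!\int g\,d\mu\,dt$ over $g$ with $\|g\|_{L^\gamma(0,T;L^\infty(\langle x\rangle^{-k}))}\le 1$, and I apply the identity with $\xi=0$, $\tau=T$. By \rife{duhamel-u} and Young's convolution (the exponential kernel belongs to $L^1$), $[\phi(0)]_{\langle x^k\rangle}\le C\|g\|_{L^\gamma}$, controlling the $\|\mu_0\|_{TV_k}$ term. For the source the crucial step is the three-factor interpolation $|\Phi|\le M^{1-\alpha}\langle x\rangle^{k(1-\alpha)}|\Phi|^\alpha$ with $\alpha=2/\gamma'\in(0,1)$, followed by H\"older against $d\bar m$ with conjugate exponents $(\gamma,\gamma')$ (chosen so that $\alpha\gamma'=2$) and again in time with the same pair. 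A direct computation gives $\gamma(2-\alpha)=2$, which lets the weight $\langle x\rangle^{k\gamma(2-\alpha)}=\langle x\rangle^{2k}$ be absorbed by $\|\bar m\|_{TV_{2k}}$, producing
\[
\int_0^T\!\!\int|D\phi||\Phi|\,d\bar m\,dt\;\lesssim\; M^{(\gamma'-2)/\gamma'}\Bigl(\int_0^T\!\|D\phi\|_{L^\infty(\langle x\rangle^{-k})}^\gamma dt\Bigr)^{1/\gamma}\Bigl(\int_0^T\!\!\int|\Phi|^2 d\bar m\,dt\Bigr)^{1/\gamma'}.
\]
The gradient factor is controlled by $C\|g\|_{L^\gamma}$ combining \rife{duhamel-Du} on $(0,T-1)$ and \rife{dvT} on $(T-1,T)$, via Young's convolution for the exponential kernel and the elementary $L^\gamma$-boundedness of the local suprema $\sup_{[t,t+1]}[g]_{\langle x^k\rangle}$. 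Taking the supremum in $g$ and raising to the $\gamma'$ power yields \rife{estm2}.

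The delicate points I expect are the combined regularizing-plus-decay estimate $\|D\phi(s)\|_{L^\infty(\langle x\rangle^{-k})}\lesssim (t-s)^{-1/\sigma}e^{-\omega(t-s)/2}[\xi]_{\langle x^k\rangle}$ needed in the distant window of \rife{estm1}, and, for \rife{estm2}, pinpointing the interpolation exponent $\alpha=2/\gamma'$ as the unique choice that simultaneously produces the correct $M$-power $(\gamma'-2)/\gamma'$ and absorbs the $\langle x\rangle^{2k}$ weight of $\bar m$ via $\|\bar m\|_{TV_{2k}}$.
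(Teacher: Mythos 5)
Your argument for \eqref{estm1} matches the paper's proof in all essential respects: it solves the homogeneous dual problem, uses the combined decay--regularization bound $\|D\phi(s)\|_{L^\infty(\langle x\rangle^{-k})}\lesssim(t-s)^{-1/\sigma}e^{-\omega(t-s)}\|\xi\|_{L^\infty(\langle x\rangle^{-k})}$ (which is precisely \eqref{ecchila}), splits the source integral at $t-\delta$, and closes with Cauchy--Schwarz on the distant window.

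For \eqref{estm2}, however, there is a genuine gap. You rely on the assertion that the sliding-window suprema $t\mapsto\sup_{s\in[t,t+1]}[g(s)]_{\langle x^k\rangle}$ are bounded on $L^\gamma(0,T)$, so that \eqref{duhamel-Du} and \eqref{dvT} give $\|D\phi\|_{L^\gamma(0,T;L^\infty(\langle x\rangle^{-k}))}\lesssim\|g\|_{L^\gamma(0,T;L^\infty(\langle x\rangle^{-k}))}$. This is false: taking $h(s)=\epsilon^{-1/\gamma}\dschi_{[0,\epsilon]}(s)$ gives $\|h\|_{L^\gamma}=1$, while $\sup_{s\in[t,t+1]}h(s)=\epsilon^{-1/\gamma}$ for $t$ in an interval of length roughly one, so $\big\|\sup_{s\in[t,t+1]}h(s)\big\|_{L^\gamma(dt)}\sim\epsilon^{-1/\gamma}\to\infty$. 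A pointwise sliding supremum is not an averaged maximal function and carries no $L^\gamma$ gain. What your route would actually require is a convolution-type Duhamel bound $\|D\phi(t)\|_{L^\infty(\langle x\rangle^{-k})}\lesssim\int_t^T(s-t)^{-1/\sigma}e^{-\omega(s-t)}[g(s)]_{\langle x^k\rangle}\,ds$, to which Young's inequality would apply since the kernel is integrable; but \eqref{duhamel-Du} as stated does not yield this, and it would have to be proved separately for the nonhomogeneous dual equation.

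The paper avoids the issue altogether: instead of estimating $D\phi$ in $L^\gamma_t$, it fixes $g=0$, uses \eqref{ecchila}, and derives a pointwise-in-$t$ bound on $\|\mu(t)\|_{TV_k}$ by splitting the source at $t-1$, applying H\"older with exponents $(\gamma,\gamma')$ in $s$ only on $[t-1,t]$ (where $(t-s)^{-\gamma/\sigma}$ is integrable because $\gamma<\sigma$) and Cauchy--Schwarz against the decaying exponential on $[0,t-1]$. Raising this pointwise inequality to the power $\gamma'$, integrating in $t$, and then writing $\|\Phi(s)\|_{L^2(d\bar m)}^{\gamma'}=\big(\|\Phi(s)\|_{L^2(d\bar m)}^2\big)^{(\gamma'-2)/2}\|\Phi(s)\|_{L^2(d\bar m)}^2$ (recall $\gamma'\ge 2$) gives \eqref{estm2} without any $L^\gamma_t$ estimate on the dual solution. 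Your interpolation $|\Phi|\le M^{1-\alpha}\langle x\rangle^{k(1-\alpha)}|\Phi|^\alpha$ with $\alpha=2/\gamma'$ is a clean alternative device to extract the $(\gamma'-2)$-power of $M$ and absorb $\langle x\rangle^{2k}$ into $\|\bar m\|_{TV_{2k}}$, but it does not repair the missing $L^\gamma_t$ control of $D\phi$.
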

 
\begin{proof} As in Proposition  \ref{lem72},  we consider the duality with the adjoint problem 
\beq\label{traslo2}
\begin{cases}
-\partial_t u - \mathcal{L} u   + {\mathcal B}(t,x) \cdott   Du   =  0 &  \text{ in } ( 0,t ) \times \R^d, \\ 
u( t,x ) = \xi (x),\qquad  \,&  \text{ in }   \R^d\,
  \end{cases}
\eeq
for a generic $\xi \in L^\infty(\langle x\rangle^{-k})$.  Let us point out that estimates \rife{duhamel-u}--\rife{duhamel-Du} (read with $f=0$), together with the short time regularizing effect (see Remark \ref{lipda0}) imply that the solution of \rife{traslo2} satisfies
\beq\label{ecchila}
\| Du(s)\|_{L^\infty(\langle x\rangle^{-k} )} \leq  K\, \frac{ e^{-\omega (t-s)}}{(t-s)^{\frac1\sigma}} \,\,  \|\xi\|_{L^\infty(\langle x\rangle^{-k})}\,, \quad \forall s\in (0,t)\,.
\eeq
The duality  between \rife{traslo2} and \rife{fpnonhom} implies
\begin{align*}
\int_{\R^d} \xi \, d\mu(t)  & = \int_{\R^d}  u(0) \, d\mu_0 - \int_0^t \int_{\R^d} \Phi\cdott Du\, d\bar m \\
& \leq \|\mu_0\|_{TV_k} \|u(0)+c\|_{L^\infty(\langle x\rangle^{-k})}- \int_0^t \int_{\R^d} \Phi\cdott Du\, d\bar m 
\end{align*}
where we used that $\mu_0$ has zero average, and $c$ is any constant. Due to \rife{hairer}, optimizing on $c$ and using Theorem \ref{decayum} we get
$$
\int_{\R^d} \xi \, d\mu(t)   \leq  \|\mu_0\|_{TV_k} \, K e^{-\omega t} \|\xi\|_{L^\infty(\langle x\rangle^{-k})}- \int_0^t \int_{\R^d} \Phi\cdott Du\, d\bar m \,.
$$
Now we estimate  the last term, and we split it into two parts, in the case that $t>\de$. We get
\beq\label{calma}
\begin{split}
\int_{\R^d} \xi \, d\mu(t)   & \leq 
 \|\mu_0\|_{TV_k} \, K e^{-\omega t} \|\xi\|_{L^\infty(\langle x\rangle^{-k} )}
-  \int_{(t-\de)_+}^t \int_{\R^d} \Phi\cdott Du\, d\bar m \\
& \qquad 
+ \left( \int_0^{(t-\de)_+} \int_{\R^d}  |\Phi|^2\, d\bar m \,  ds \right)^{\frac12} \left( \int_0^{(t-\de)_+} \int_{\R^d} |Du|^2\, d\bar m   ds \right)^{\frac12}
\end{split}
\eeq
Using   \rife{ecchila}, and the fact that $\bar m \in \cP_{2k} $, we estimate
\begin{align*}
\left( \int_0^{(t-\de)_+} \int_{\R^d} |Du|^2\, d\bar m  ds \right)^{\frac12} & \leq   C \|\xi\|_{L^\infty(\langle x\rangle^{-k})}\|\bar m\|_{TV_{2k}}^{\frac12}
  \left( \int_0^{(t-\de)_+}  e^{-2\omega(t-s)}(t-s)^{-2/\sigma}ds \right)^{\frac12} 
  \\
  &  \leq C \,e^{-\omega\, \de}\, \de^{\frac12-\frac1\sigma}  \,\  \|\xi\|_{L^\infty(\langle x\rangle^{-k})}\,.
\end{align*}
Here and after, we denote by $C$ possibly different constants only depending on $\alpha_0, \beta_0, d, \sigma$, $k, c_{d,\sigma}, \|\bar m\|_{TV_{2k}}$ (and depending on $\gamma$ when we get later at \rife{estm2}).  We also have, still using \rife{ecchila},
\begin{align*}
\int_{(t-\de)_+}^t \int_{\R^d} \Phi\cdott Du\, d\bar m\,ds & \leq    \int_{(t-\de)_+}^t  \|Du(s)\|_{L^\infty(\langle x\rangle^{-k})}\, \|\Phi(s)\|_{ L^\infty(\langle x\rangle^{-k} )} \int_{\R^d} \langle x\rangle^{2k}d\bar m  \
\\
& \leq  \|\bar m\|_{TV_{2k}}\, K\,  \|\xi\|_{L^\infty(\langle x\rangle^{-k} )} \int_{(t-\de)_+}^t  (t-s)^{-\frac1\sigma} \|\Phi(s)\|_{ L^\infty(\langle x\rangle^{-k} )}  \\
& 
\leq C \, \de^{1-\frac1\sigma} \,\|\xi\|_{L^\infty(\langle x\rangle^{-k} )}\, \sup_{s\in [(t-\de)_+,t]} \|\Phi(s)\|_{ L^\infty(\langle x\rangle^{-k})}  \,.
\end{align*}
Putting together the above estimates, we deduce from \rife{calma}:
\begin{align*}
& \int_{\R^d} \xi \, d\mu(t)    \leq  C\,  \left\{ e^{-\omega t}  \|\mu_0\|_{TV_k} + \de^{1-\frac1\sigma} \sup_{s\in [(t-\de)_+,t]} \|\Phi(s)\|_{ L^\infty(\langle x\rangle^{-k} )} \right.
\\
& \left. \qquad\qquad  + \de^{\frac12-\frac1\sigma} \left( \int_0^{(t-\delta)_+} \int_{\R^d} |\Phi|^2\,d\bar m ds \right)^{\frac12}\right\}
\|\xi\|_{L^\infty(\langle x\rangle^{-k})}
\end{align*}
which allows us to deduce \rife{estm1}.

In order to obtain \rife{estm2} we proceed similarly; as before we estimate  by duality (using the solution of \rife{traslo2})
\begin{align*}
&
\int_{\R^d} \xi \, d\mu(t)   = \int_{\R^d}  u(0) \, d\mu_0 - \int_0^t \int_{\R^d} \Phi\cdott Du\, d\bar m
\\
& \,\, \leq C\,  e^{-\omega t}  \|\mu_0\|_{TV_k} \, \|\xi\|_{L^\infty(\langle x\rangle^{-k} )} + \int_0^t  \|\bar m\|_{TV_{2k}} \| Du(s)\|_{L^\infty(\langle x\rangle^{-k})}\, \|\Phi(s)\|_{L^2(d\bar m)}
\\
& \,\, \leq C\,  e^{-\omega t}  \|\mu_0\|_{TV_k} \, \|\xi\|_{L^\infty(\langle x\rangle^{-k} )} 
\\
&   \quad + C  \|\xi\|_{L^\infty(\langle x\rangle^{-k} )}  \left\{ \int_{t-1}^t  \|\Phi(s)\|_{L^2(d\bar m)}(t-s)^{-\frac1\sigma}ds
+ \int_0^{t-1}  e^{-\omega(t-s)} \|\Phi(s)\|_{L^2(d\bar m)}
ds  \right\}
\end{align*}
where we used \rife{ecchila}, and we suppose by now that $t>1$.
The above inequality allows us to estimate $\|\mu(t)\|_{TV_k}$, and by H\"older inequality we obtain, for any $\gamma\in (1,\sigma)$ (so that $  (t-s)^{-\frac\gamma\sigma}$ is integrable),
\beq\label{summa}
\begin{split}
\|\mu(t)\|_{TV_k} & \leq C \left\{ e^{-\omega t}  \|\mu_0\|_{TV_k}+  C \left(\int_{t-1}^t \left(  \int_{\R^d}  |\Phi(s)|^2\,   d\bar m \right)^{\frac{\gamma'}2}ds \right)^{\frac1{\gamma'}} \right.
\\
& \left.
\qquad  +  C\left( \int_0^{t-1}  e^{-\omega(t-s)} \int_{\R^d} |\Phi(s)|^2\,  d\bar m \right)^{\frac12}\right\}\,.
\end{split}
\eeq
 We take the power $\gamma'$ and we integrate obtaining
 \beq\label{sporca}
 \begin{split}
&
\int_1^T \|m(t)\|_{TV_k}^{\gamma'}dt  \leq C  \|\mu_0\|_{TV_k}^{\gamma'} +  C  \int_1^T dt \int_{t-1}^t \left(  \int_{\R^d}  |\Phi(s)|^2\,   d\bar m \right)^{\frac{\gamma'}2}  ds
\\
& 
+  C\int_1^T \left( \int_0^{t-1}  e^{-\omega(t-s)} \int_{\R^d} |\Phi(s)|^2\,  d\bar m \, ds \right)^{\frac{\gamma'}2}   \,.
\end{split}
\eeq
Now we observe that, for $s\in [t-1,t]$,
$$
 \int_{\R^d}   |\Phi(s)|^2\,  d\bar m\leq  \left( \sup_{s\in [t-1,t]} \|\Phi(s)\|_{ L^\infty(\langle x\rangle^{-k})} \right)^2\, \|\bar m\|_{TV_{2k}} 
$$
and similarly 
$$
\int_0^{t-1}  e^{-\omega(t-s)}  \int_{\R^d} |\Phi(s)|^2\,  d\bar m\, ds  \leq  C_\omega \,  \left( \sup_{s\in [0,t-1]} \|\Phi(s)\|_{ L^\infty(\langle x\rangle^{-k})} \right)^2\, \|\bar m\|_{TV_{2k}}  \,.
$$
Therefore, using that $\gamma' \geq 2$ (because $\gamma<\sigma<2$), we deduce from \rife{sporca} that
\begin{align*}
&
\int_1^T \|\mu(t)\|_{TV_k}^{\gamma'}dt  \leq C\,  \|\mu_0\|_{TV_k}^{\gamma'}  \\
& \quad +   C\,  \left( \sup_{s\in [0,T]} \|\Phi(s)\|_{ L^\infty(\langle x\rangle^{-k})} \right)^{\gamma'-2}  \int_1^T dt \int_{t-1}^t  \int_{\R^d}  |\Phi(s)|^2\, d\bar m 
\\
&    \quad  + C\, \left( \sup_{s\in [0,T]} \|\Phi(s)\|_{ L^\infty(\langle x\rangle^{-k})} \right)^{\gamma'-2}   \int_1^T dt \int_0^{t-1}  e^{-\omega(t-s)}  \int_{\R^d}  |\Phi(s)|^2\, d\bar m \, ds 
\end{align*}
for some possibly different constant   $  C$ depending also on $\|\bar m\|_{TV_{2k}}$.

Exchanging the order of integration between $s$ and $t$ we conclude that 
\beq\label{da1aT}\begin{split}
\int_1^T \|\mu(t)\|_{TV_k}^{\gamma'} dt   & \leq C\, \left\{ \| \mu_0\|_{TV_k}^{\gamma'} 
+    \left( \sup_{s\in [0,T]} \|\Phi(s)\|_{ L^\infty(\langle x\rangle^{-k})}\right)^{\gamma'-2}  \int_0^T \int_{\R^d}  |\Phi(s)|^2\, d\bar m \, dt  \right\}\,.
\end{split}
\eeq
We easily estimate the integral for $t\in (0,1)$ in a similar way, since we have (as before in \rife{summa}),
$$
\|\mu(t)\|_{TV_k}\leq K  e^{-\omega t}  \|\mu_0\|_{TV_k}+   K \left(\int_{0}^t \left(  \int_{\R^d}  |\Phi(s)|^2\, d\bar m \right)^{\frac{\gamma'}2}ds \right)^{\frac1{\gamma'}}\,.
$$
Hence
\begin{align*}
& \int_0^1 \|\mu(t)\|_{TV_k}^{\gamma'} dt     \leq C\, \| \mu_0\|_{TV_k}^{\gamma'} + C\int_0^1 \int_{0}^t \left(  \int_{\R^d}  |\Phi(s)|^2\, d\bar m \right)^{\frac{\gamma'}2}dsdt \\
& \quad \leq C\, \| \mu_0\|_{TV_k}^{\gamma'} + C \left( \sup_{s\in [0,T]} \|\Phi(s)\|_{ L^\infty(\langle x\rangle^{-k})}\right)^{\gamma'-2}  \int_0^T\int_{\R^d}  |\Phi(s)|^2\, d\bar m\,  dt \,.
\end{align*}
Adding this contribution to \rife{da1aT}, we conclude that 
\rife{estm2} holds true.
\end{proof}

\subsection{Proof of  the main result}

Finally, we are ready to prove the main result of the paper, that we restate here for the reader's convenience.
 
\vskip0.4em

\begin{thm}\label{turnpike} Assume hypotheses \ref{B}--\ref{B2}, \ref{H1}--\ref{H6} and \ref{F1}--\ref{F3}. Assume that $m_0\in \cP_k$ for some $k\in (0,\sigma)$, and $u_T\in W^{1,\infty}(\rd)$. Let $(u,m)$ be the solution of \rife{eqn:parabolic_MFG} and let $(\lambda, \bar u, \bar m)$ be the solution of  \rife{eqn:ergodic_MFG}, such that  $\bar m \in \cP_{2k}$. Then there exist   $\omega, L>0$ (independent of $T$) such that 
\beq\label{turnest1}
\|m(t)-\bar m\|_{TV_{k}}+ [u(t)-\bar u]_{\langle x^k\rangle}  +  \|Du(t)-D\bar u \|_{L^\infty(\langle x\rangle^{-k})} \leq L (e^{-\omega t}+ e^{-\omega (T-t)}) 
\eeq
for every $t\in (0,T) $.
\end{thm}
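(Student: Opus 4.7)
The strategy is to linearize around the ergodic solution. Setting $v := u-\bar u$ and $\mu := m-\bar m$, and writing the exact remainders $H(x,Du)-H(x,D\bar u) = V_0\cdot Dv$, $D_pH(x,Du)-D_pH(x,D\bar u) = MDv$ with $V_0 := \int_0^1 D_pH(x,D\bar u + sDv)\,ds$ and $M := \int_0^1 D_{pp}^2H(x,D\bar u+sDv)\,ds$, the pair $(v,\mu)$ satisfies
\begin{align*}
  -\partial_t v - \mathcal{L} v + (b+V_0)\cdot Dv &= F(x,m)-F(x,\bar m)+\lambda, \qquad v(T)=u_T-\bar u,\\
  \partial_t\mu -\mathcal{L}^*\mu - \dive\bigl(\mu(b+D_pH(x,Du))\bigr) &= \dive(\bar m\,MDv), \qquad \mu(0)=m_0-\bar m.
\end{align*}
By \ref{B}--\ref{B2}, \ref{H2}, and the $T$-independent Lipschitz bounds on $Du,D\bar u$ from Section \ref{HJB}, both drifts satisfy \rife{bdiss}--\rife{disso} uniformly in $t$, so the linear decay framework of Propositions \ref{lem72} and \ref{estm} applies.

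The first key step is a Lasry--Lions energy identity. Testing the $v$-equation against $d\mu$ and the $\mu$-equation against $v$ (using the duality of Definition \ref{def:dsol}, legal since $v$ is Lipschitz with at most linear growth), and invoking the stationarity of $\bar m$ with respect to the drift $b+D_pH(x,D\bar u)$ to simplify the residual $\int\mathcal{L} v\,d\bar m$, a direct computation gives
\begin{align*}
\tfrac{d}{dt}\!\int v\,d\mu
= \ &\int\!\bigl[H(x,Du)-H(x,D\bar u)-D_pH(x,Du)\!\cdot\!Dv\bigr]\,dm\\
&-\int\!\bigl[H(x,Du)-H(x,D\bar u)-D_pH(x,D\bar u)\!\cdot\!Dv\bigr]\,d\bar m -\int(F(x,m)-F(x,\bar m))\,d\mu.
\end{align*}
Strict convexity \ref{H5} makes the first bracket $\leq -\tfrac{\alpha_K}{2}|Dv|^2$ and the second $\geq \tfrac{\alpha_K}{2}|Dv|^2$, so both contributions appear with the right sign, and \ref{F1} makes the last integral $\geq 0$. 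Integrating in $t$, bounding $|\!\int v(t)\,d\mu(t)|\leq [v(t)]_{\langle x\rangle^k}\|\mu(t)\|_{TV_k}$ via \rife{hairer}, and using the $T$-independent oscillation bounds on $v(0),v(T)$ from Proposition \ref{lipest0} and Theorem \ref{thm:well_posedness_HJB_ergodic}, I arrive at
\begin{equation}\label{plan-energy}
\int_0^T\!\!\int |Dv|^2\,d\bar m\,dt\ \leq\ C\bigl([v(0)]_{\langle x\rangle^k}\|\mu(0)\|_{TV_k}+[v(T)]_{\langle x\rangle^k}\|\mu(T)\|_{TV_k}\bigr).
\end{equation}

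The remainder couples \eqref{plan-energy} with the Duhamel estimates of Propositions \ref{lem72} and \ref{estm} through assumption \ref{F3}. Setting $a(t):=[v(t)]_{\langle x\rangle^k}+\|Dv(t)\|_{L^\infty(\langle x\rangle^{-k})}$ and $b(t):=\|\mu(t)\|_{TV_k}$, assumption \ref{F3} gives $[F(\cdot,m(t))-F(\cdot,\bar m)]_{\langle x\rangle^k}\leq C\,b(t)$, so Proposition \ref{lem72} yields
$$a(t) \leq K e^{-\omega(T-t)}C_0 + K\!\int_t^T\! e^{-\omega(\tau-t)} b(\tau)\,d\tau + K\!\sup_{[t,t+1]}\! b,$$
while $|MDv|\leq L_H|Dv|$ by \ref{H2} with Proposition \ref{estm} at $\delta=1$ gives
$$b(t) \leq K e^{-\omega t} b(0) + K\!\sup_{[(t-1)_+,t]}\! a + K\,\Xi^{1/2}, \quad \Xi := \int_0^T\!\!\int |MDv|^2\,d\bar m\,dt.$$
The main obstacle is closing this coupled Volterra pair while preserving the $e^{-\omega t}+e^{-\omega(T-t)}$ profile at both endpoints: a naive substitution produces factors polynomial in $T$, and the source $\Xi^{1/2}$ is only a priori $O(1)$, not decaying. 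I resolve this by a two-step bootstrap. Step one: the $T$-independent bounds on $C_0, [v(0)]_{\langle x\rangle^k}, [v(T)]_{\langle x\rangle^k}, b(0)$ from Section \ref{HJB} and Lemma \ref{lem:comp}(a), combined with \eqref{plan-energy}, bound $\Xi$ uniformly; plugged back into the two inequalities this produces uniform-in-$T$ bounds on $a$ and $b$. Step two: refine using the $L^{\gamma'}$ bound \rife{estm2} of Proposition \ref{estm} to trade $\Xi^{1/2}$ for an expression involving $\sup_{[0,T]}a$ times a factor controlled by $\Xi$, and split the time convolution in the $a$-inequality at the midpoint $T/2$: the tail near $\tau=T$ reproduces the $e^{-\omega(T-t)}$ decay, the region where $b$ already satisfies the target $L(e^{-\omega s}+e^{-\omega(T-s)})$ reproduces the same profile in $a$ with a prefactor $C/\omega$, and the $\Xi^{1/2}$ contribution becomes small once $a$ is known to decay. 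Choosing $\omega$ strictly below the linear rate of Theorem \ref{decayum} and iterating finitely many times yields a fixed-point inequality for the common decay constant $L$, producing \rife{turnest1}.
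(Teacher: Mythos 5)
Your setup is identical to the paper's: linearize around the stationary solution, derive the Lasry--Lions energy identity from the monotonicity of $F$ and convexity of $H$, obtain the global bound \eqref{plan-energy}, then couple the Duhamel estimates of Propositions \ref{lem72} and \ref{estm} via \ref{F3} and the $L^{\gamma'}$ estimate \rife{estm2}, and arrive at uniform-in-$T$ preliminary bounds on $[v(t)]_{\langle x^k\rangle}$, $\|Dv(t)\|_{L^\infty(\langle x\rangle^{-k})}$, $\|\mu(t)\|_{TV_k}$ of order $M := [v_T]_{\langle x^k\rangle}+\|Dv_T\|_{L^\infty(\langle x\rangle^{-k})}+\|\mu_0\|_{TV_k}$. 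Up to that point the proposal is a faithful reconstruction of Section \ref{turnpi}.

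The closing argument, however, has a genuine gap. You claim to finish by a ``fixed-point inequality for the common decay constant $L$'' based on the observation that ``the $\Xi^{1/2}$ contribution becomes small once $a$ is known to decay''. This is circular and, as stated, false: if $a(t)=\|Dv(t)\|_{L^\infty(\langle x\rangle^{-k})}+[v(t)]_{\langle x^k\rangle}\leq L(e^{-\omega t}+e^{-\omega(T-t)})$, then $\Xi=\int_0^T\int|MDv|^2 d\bar m\,dt\lesssim L^2/\omega$, so $\Xi^{1/2}=O(L)$ is of the \emph{same} order as the target, not smaller. Moreover you apply Proposition \ref{estm} at $\delta=1$, whereas the $\delta$-dependent prefactors $\delta^{1-1/\sigma}$ and $\delta^{1/2-1/\sigma}$ (and the ensuing optimization in $\delta$) are precisely what allows the source term to be absorbed. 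The paper breaks the circularity differently: from $\int_0^T\|\mu(t)\|_{TV_k}^{\gamma'}dt\leq CM^{\gamma'}$ one extracts, for any $\tau\in(0,T/2)$, points $\xi_\tau\in[0,\tau/2]$ and $\eta_\tau\in[T-\tau/2,T]$ at which $\|\mu\|_{TV_k}\leq (2C)^{1/\gamma'}M/\tau^{1/\gamma'}$; re-running the energy identity \rife{ddtT} on $(\xi_\tau,\eta_\tau)$ — with these small endpoint values and the bounded oscillation of $v$ — localizes the $L^2$ quantity to $\int_{\xi_\tau}^{\eta_\tau}\int|Dv|^2 d\bar m\leq CM^2/\tau^{1/\gamma'}$. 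Feeding this local smallness back into \rife{estm1} with optimized $\delta$ yields $\|\mu(t)\|_{TV_k}\leq\tfrac12 M$ for all $t\in[\tau,T-\tau]$ once $\tau$ is fixed large enough (independently of $T$), and then Proposition \ref{lem72} transfers the halving to $[v]_{\langle x^k\rangle}$ and $\|Dv\|_{L^\infty(\langle x\rangle^{-k})}$. Iterating the $\tfrac12$-reduction on nested windows $[n\tau,T-n\tau]$ is what produces \rife{turnest1}; without the intermediate-time device, the source term $\Xi^{1/2}$ in your $b$-inequality remains an $O(M)$ floor that blocks any decay.
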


\vskip1em
\begin{remark}\label{constant} The values of $\omega$ and $L$ in \rife{turnest1} depend on the dimension $d$,  on the drift-diffusion constants $\alpha, \beta, c_{\sigma,d}$ appearing in \ref{nu'}--\ref{B}, on the growth constants of the function $H(x,p)$ restricted to $|p|\leq K$ (given by the gradient bound in Theorem \ref{ex-MFG}),  on $\|m_0\|_{TV_k}$, $\|u_T\|_{L^\infty(\langle x\rangle^{-k})}$, $\|Du_T\|_{L^\infty(\langle x\rangle^{-k})}$, and on $\bar m$, in particular through $\| \bar m\|_{TV_{2k}}$.
\end{remark}

\vskip1em

\begin{proof}[{\bf Proof of Theorem \ref{turnpike}.}] \quad We set 
$$
v:= u- \bar u- \lambda(T-t),\quad  \mu:= m-\bar m\,,
$$
and we notice that $(\mu,v)$ solve the system
 \beq\label{ls}
 \begin{cases}
-\partial_t v - \cI[ v] + h(x, Dv)  = f(x,\mu),   &  t\in (0,T),
\\
v(T)= v_T,& 
\\[0.2cm]
\partial_t \mu  - \cI^*[ \mu]- \dive(\mu\, D_ph(x, Dv)) =  \dive(\bar m \Phi(x,Dv)), &  \,t\in (0,T),
\\
\mu(0)=     \mu_0,
& \end{cases}
\eeq
where  $v_T= u_T- \bar u$, $\mu_0= m_0-\bar m$, and where the functions $h(x,p), f(x,\mu), \Phi(x,p)$ are defined by 
\begin{align*}
& h(x,p):= b(x) \cdott p + H(x,D\bar u(x) + p) - H(x,D\bar u(x)),
\\
&  f(x,\mu):= F(x,\bar m   + \mu(t)) - F(x,\bar m ),
\\
& 
\Phi(x,p):=  D_pH (x,D\bar u(x) + p)- D_pH (x, D\bar u(x)) \,.
\end{align*}
We first recall (from the gradient bound in Theorem \ref{ex-MFG})  that $Du$ is   bounded (uniformly in time) and so is $D\bar u$, which means that $Dv$ is uniformly bounded, and the function $H(x,p)$ can be assumed to be bounded, with bounded second derivatives, and uniformly convex in $p$ due to \ref{H1} and \ref{H5}.  Then, if we exploit  the  duality between $v$ and $\mu$, using the monotonicity of $F$ and the convexity of $H$  we get
\begin{align*}
-\frac{d}{dt} \, \into  v(t)\,d\mu(t)  & =  \into (F(x,\bar m   + \mu(t)) - F(x,\bar m ))d\mu(t) \\
& - \into (h(x,Dv)- D_ph (x, Dv)\cdott Dv)d\mu(t) 
+ \into \Phi(x,Dv)\cdott Dv\, d\bar m 
 \\ & \geq  \into \Phi(x,Dv)\cdott Dv\, d\bar m  
 \\ & =   \into   \big[ D_pH (x,D\bar u(x) + Dv(x))- D_pH (x, D\bar u(x))\big]\cdott Dv(x)\, d\bar m  
\end{align*}
which yields, due to  \ref{H5},
 \beq\label{ddtT}
-\frac{d}{dt} \, \into v(t)\,d\mu(t)    \geq c_0\into   |Dv|^2  \, d\bar m 
\eeq
for some $c_0>0$. This implies
\beq\label{globT}
 \begin{split}
 & c_0\int_0^T\into   |Dv|^2 \, d\bar m  \leq \into v(0)\, d\mu_0    - \into v_T\, d\mu(T)   \\
 & \quad \leq \|\mu_0\|_{TV_{k}} \, \inf_c\, [\|v(0)+c\|_{L^\infty(\langle x\rangle^{-k})}] + \|\mu(T)\|_{TV_k} \, \inf_c\, [\|v_T+c\|_{L^\infty(\langle x\rangle^{-k})}]
 \\ & \quad = \|\mu_0\|_{TV_k} \,   [v(0)]_{\langle x\rangle^{-k}} + \|\mu(T)\|_{TV_k} \,  [v_T]_{\langle x\rangle^{-k}}
 \end{split}
\eeq
where we used that $\mu$ has zero average and the characterization of the seminorm in \rife{hairer}. 
Here and after,  $k\in (0,\sigma)$ is chosen in a way that $\bar m \in \cP_{2k}$. 

Now we apply Proposition \ref{estm} with 
${\mathcal B}(t,x)=D_ph (x,Dv(t,x))$ and $\Phi= \Phi(x,Dv(t,x))$. Since $D_ph (x,Dv(t,x))= b(x)+ D_pH(x,Du(t,x))$, the fact that $u$ is $C^1$ and globally Lipschitz implies, together with conditions \ref{B}--\ref{B2} and \ref{H2}--\ref{H6},   that  ${\mathcal B}$ is a continuous vector field satisfying \rife{bdiss}--\rife{disso}. Moreover, since $D_pH$ is locally Lipschitz in $p$ from \ref{H5}, and $Du, D\bar u$ are bounded, we have 
$$
|\Phi(x,Dv)|\leq  c_1   \, |Dv|
$$
for some $c_1>0$. Hence  we have that $ \Phi(x,Dv)\in  L^\infty(\langle x\rangle^{-k})\cap L^2((0,T);L^2(d\bar m))$, with  
\beq\label{F=B}
\begin{split}
\int_0^T \into |\Phi(x,Dv)|^2\, \bar m dt & \leq c_1^2\, \int_0^T\into   |Dv|^2\, d\bar m \,, 
\\
\noalign{\medskip}
\qquad 
\| \Phi(x,Dv)\|_{ L^\infty(\langle x\rangle^{-k})} & \leq c_1 \|Dv\|_{L^\infty(\langle x\rangle^{-k})}\,.
\end{split}
\eeq
We now plug the above quantities in the estimate  \rife{estm1} of Proposition \ref{estm}, and we get
\beq\label{mut}
\begin{split}
\|\mu(t)\|_{TV_{k} } & \leq C \, \left\{ e^{-\omega t} \, \|\mu_0\|_{TV_k} + \de^{1-\frac1\sigma}\sup_{s\in [(t-\de)_+,t]} \|Dv(s)\|_{ L^\infty(\langle x\rangle^{-k})}  \right.
\\  & \qquad \qquad  \left. +  \de^{\frac12-\frac1\sigma}\left( \int_0^{(t-\de)_+}\into |Dv|^2\, d\bar m \right)^{\frac12}\right\}
\end{split}
\eeq
for every $t\in (0,T)$. Here and after, we denote by $C$ possibly different constants which are independent of $T$, and only depend on the data as explained in Remark \ref{constant}.  Now we use the equation of $v$, applying Proposition \ref{lem72}. To this purpose we observe that  we can write
$$
h(x,Dv)={ \tilde {\mathcal B}}(t,x)\cdot Dv\,,\quad \hbox{where ${ \tilde {\mathcal B}}(t,x)= b(x) + \int_0^1D_pH(x,(1-s)D\bar u(x) + s Du(t,x))ds$.}
$$
In particular, ${\mathcal B}$  satisfies \rife{bdiss}--\rife{disso} due to \ref{B} and \ref{H2}.  So we can apply estimates \rife{duhamel-u} and \rife{duhamel-Du} from Proposition \ref{lem72}, which yield
\beq\label{brackv}
[v(t)]_{\langle x^k\rangle} \leq  K\, e^{-\omega (T-t)}  [v_T]_{\langle x^k\rangle} + K \int_t^T  e^{-\omega(s-t)}[f(\cdot,\mu(s))]_{\langle x^k\rangle}ds 
\eeq
and
\beq\label{brackDv}
\begin{split}
\| Dv(t)\|_{L^\infty(\langle x\rangle^{-k} )}  & \leq  K \,e^{-\omega (T-t)}[v_T]_{\langle x^k\rangle} 
+  K \int_t^T  e^{-\omega(s-t)}[f(\cdot,\mu(s))]_{\langle x^k\rangle}ds  \\ 
& \qquad 
+ K \left(\sup_{s\in [t,t+1]} [f(\cdot, \mu(s))]_{\langle x^k\rangle}\right)
\qquad \forall t\in (0, T-1)\,.
\end{split}
\eeq
In addition, since $Dv_T$ is bounded
, we also have
\beq\label{dvT0}
\|Dv(t) \|_{L^\infty(\langle x\rangle^{-k})} \leq \|Dv_T \|_{L^\infty(\langle x\rangle^{-k})} + C \, \sup_{s\in [T-1,T]}[f(\cdot ,\mu(s))]_{\langle x^k\rangle} \qquad \forall t\in (T-1,T).
\eeq
By  condition \ref{F3},  we have
$$
[f(\cdot,\mu(s))]_{\langle x^k\rangle}  \leq C \,\|\mu(s)\|_{TV_k}\,.
$$
Therefore we deduce from \rife{brackDv} and \rife{dvT0} that 
\beq\label{brackDv2}
\|Dv(t) \|_{L^\infty(\langle x\rangle^{-k})}  \leq K \,e^{-\omega (T-t)}[v_T]_{\langle x^k\rangle}  + \|Dv_T \|_{L^\infty(\langle x\rangle^{-k})} + C \sup_{s\in [0,T]} \|\mu(s)\|_{TV_k}\,.
\eeq
for every $t\in [0,T]$. Using this information in \rife{mut} yields
\begin{align*}
& \|\mu(t)\|_{TV_{k}}   \leq C \, \left\{ e^{-\omega t} \, \|\mu_0\|_{TV_k} + \de^{1-\frac1\sigma} \left( K \,e^{-\omega (T-t)}[v_T]_{\langle x^k\rangle}  + \|Dv_T \|_{L^\infty(\langle x\rangle^{-k})}\right)  \right.
\\ &  \qquad\qquad \left.  + \de^{1-\frac1\sigma}  \, C  \sup_{s\in [0,T]} \|\mu(s)\|_{TV_k}  
 +  \de^{\frac12-\frac1\sigma}\left( \int_0^{(t-\de)_+}\into   |Dv|^2\, d\bar m \right)^{\frac12}\right\}\,.
\end{align*}
If we take the sup for $t\in (0,T)$, and we choose a suitable small $\de$, only depending on the constant $C$, we conclude with the preliminary   estimate
\beq\label{prelmu}
\sup_{s\in [0,T]} \|\mu(s)\|_{TV_k}   \leq C \left\{\|\mu_0\|_{TV_k} 
+  [v_T]_{\langle x^k\rangle} + \|Dv_T \|_{L^\infty(\langle x\rangle^{-k})} +   \left( \int_0^T\into  |Dv|^2\, d\bar m\right)^{\frac12}\right\}
\eeq
for some (possibly different) constant $C$. In turn we deduce from \rife{brackv}
\begin{align*}
 & [v(0)]_{\langle x^k\rangle}    \leq  C\, e^{-\omega T}  [v_T]_{\langle x^k\rangle} + C  \, \sup_{s\in [0,T]} \|\mu(s)\|_{TV_k} 
\\ & \quad \leq C\, \left\{\|\mu_0\|_{TV_k} 
+  [v_T]_{\langle x^k\rangle} + \|Dv_T \|_{L^\infty(\langle x\rangle^{-k})} +   \left( \int_0^T\into   |Dv|^2\, d\bar m \right)^{\frac12}\right\}\,.
\end{align*}
Hereafter, let us set 
$$
M= [v_T]_{\langle x^k\rangle} + \|Dv_T \|_{L^\infty(\langle x\rangle^{-k})}+ \|\mu_0\|_{TV_k}\,.
$$
Using in \rife{globT} the above estimates of $\|\mu(T)\|_{TV_k} $ and $[v(0)]_{\langle x^k\rangle}  $, we obtain
\begin{align*}
c_0\int_0^T\into   |Dv|^2\, d\bar m   &  \leq   C \left( \|\mu_0\|_{TV_k} + [v_T]_{\langle x^k\rangle}  \right) \left\{M +   \left( \int_0^T\into   |Dv|^2\, d\bar m\right)^{\frac12}\right\} \\
& \leq C\left\{M +   \left( \int_0^T\into   |Dv|^2\, d\bar m\right)^{\frac12}\right\} 
\end{align*}
which yields
\beq\label{prelmdv}
\int_0^T\into   |Dv|^2\, d\bar m    \leq  C \, M^2\,.
\eeq
We can now update the estimate \rife{prelmu}, which gives
$$
 \|\mu(t)\|_{TV_k} \leq C\, M\,,\qquad \forall t\in (0,T)\,.
$$ 
From \rife{brackv} and \rife{brackDv2} we can also deduce a similar estimate for $Dv$ and $v$, namely
\beq\label{prelv}
[v(t)]_{\langle x^k\rangle}+ \|Dv(t) \|_{L^\infty(\langle x\rangle^{-k})} \leq C\, M \qquad \forall t\in (0,T).
\eeq
%
We now use in the equation of $\mu$ the estimate \rife{estm2} in Proposition \ref{estm}; on account of \rife{F=B} this gives
\begin{align*}
\int_0^T \|\mu(t)\|_{TV_k}^{\gamma'} dt  & \leq C \left\{ \| \mu_0\|_{TV_k}^{\gamma'} \right.
\\
& \quad\left. +  \left( c_1 \sup_{s\in [0,T]} \|Dv\|_{L^\infty(\langle x\rangle^{-k})} \right)^{\gamma'-2} c_1^2 \int_0^T \int_{\R^d}  |Dv|^2 \, d\bar m   \,dt  \right\}\,.
\end{align*}
Then,  thanks to \rife{prelmdv} and \rife{prelv}, and since $\| \mu_0\|_{TV_k}\leq M$, we deduce 
$$
\int_0^T\|\mu(t)\|_{TV_k}^{\gamma '} dt    \leq  C \, M^{\gamma '}\,.
$$
For any $\tau\in(0,T/2)$,   this implies  that there exist points $\xi_\tau \in [0,\tau/2] $ and $\eta_\tau\in [T- \tau/2, T]$ such that
\beq\label{muxitau}
 \tau\, \|\mu(\xi_\tau)\|_{TV_k}^{\gamma'}   \leq 2C\,  M^{\gamma'}  \,,\quad  \tau \|\mu(\eta_\tau)\|_{TV_k}^{\gamma'}    \leq 2C\, M^{\gamma'} \,.
 \eeq 
 Since $[v(t)]_{\langle x^k\rangle}$   is globally bounded  by \rife{prelv},  we deduce from \rife{ddtT}
\beq\label{mdvxitau}
 \int_{\xi_\tau}^{\eta_\tau} \into   |Dv|^2\, d\bar m     \leq \frac {C}{\tau^{\frac1{\gamma'}}} M^2.
\eeq
Now we use again \rife{mut} in the interval $(\xi_\tau, \eta_\tau)$; we obtain, for every $t\in (\xi_\tau, \eta_\tau)$,
 \begin{align*}
\|\mu(t)\|_{TV_k} & \leq C \, \left\{ e^{-\omega (t-\xi_\tau)} \, \|\mu(\xi_\tau)\|_{TV_k} + \de^{1-\frac1\sigma}\sup_{[\xi_\tau ,\eta_\tau]} \|Dv(s)\|_{ L^\infty(\langle x\rangle^{-k})}  \right.
\\  & \qquad \qquad  \left. +  \de^{\frac12-\frac1\sigma}\left(  \int_{\xi_\tau}^{\eta_\tau} \into  |Dv|^2\, d\bar m   \right)^{\frac12}\right\}
\\
& \leq C \left\{ \frac 1{\tau^{\frac1{\gamma'}} }M +  \de^{1-\frac1\sigma} M + \de^{\frac12-\frac1\sigma}\frac 1 {\tau^{\frac1{2\gamma'}}} M\right\}
\end{align*}
for possibly different constants $C$, where we used \rife{muxitau}, \rife{mdvxitau} and the preliminary estimate \rife{prelv}. Optimizing this inequality with respect to $\de$ we obtain
\beq\label{xieta}
\|\mu(t)\|_{TV_k} \leq  C \, M \left( \frac 1{\tau^{\frac1{\gamma'}} }+ \frac c {\tau^{\frac1{\gamma'}(1-\frac1\sigma) }}\right) \quad \forall t\in (\xi_\tau, \eta_\tau).
\eeq
This estimate gives the desired contraction. Indeed, we obtained that, choosing a suitable large $\tau$ independent of $T$, $\|\mu(t)\|_{TV_k}$ becomes smaller than $\frac12M$ in $(\xi_\tau, \eta_\tau)$, and a fortiori in $( \tau/2, T- \tau /2)$.   Eventually, we now  further   restrict   the time interval to export a similar estimate to $v$ and $Dv$. 
Suppose in fact that $t\in [\tau, T-\tau]$; if we use \rife{brackv} with terminal value equal to  $\eta_\tau$ we have
\begin{align*}
[v(t)]_{\langle x^k\rangle}  & \leq C\, e^{-\omega (\eta_\tau -t) } [v(\eta_\tau)]_{\langle x^k\rangle}  + C\,    \int_{t}^{\eta_\tau}  e^{-\omega(s-t)}\|\mu(s)\|_{TV_k} ds
\\
& \leq   C\, e^{-\omega (\eta_\tau -t) } M  +      C\, M \left( \frac 1{\tau^{\frac1{\gamma'}} }+ \frac c {\tau^{\frac1{\gamma'}(1-\frac1\sigma) }}\right) 
\end{align*}
where we used \rife{prelv} to estimate the first term and \rife{xieta} for the second one. Since $\eta_\tau -t\geq \frac\tau 2$ for $t\in [\tau, T-\tau]$, we get
$$
[v(t)]_{\langle x^k\rangle}  \leq C\, M \left(  e^{-\omega\, \tau/2}  +     \frac 1{\tau^{\frac1{\gamma'}} }+ \frac c {\tau^{\frac1{\gamma'}(1-\frac1\sigma) }}\right) \qquad \forall t\in [\tau, T-\tau].
$$
Similarly we reason for $Dv$, using \rife{brackDv} starting from $\eta_\tau$ (observe again that $ \eta_\tau-t\geq \frac \tau 2$ is large enough). Using once more \ref{F3} and \rife{xieta} we obtain as before
 $$
 \|Dv(t) \|_{L^\infty(\langle x\rangle^{-k})}   \leq C\, M \left(  e^{-\omega\, \tau/2}  +     \frac 1{\tau^{\frac1{\gamma'}} }+ \frac c {\tau^{\frac1{\gamma'}(1-\frac1\sigma) }}\right) 
 \qquad \forall t\in [\tau, T-\tau].
$$
To conclude, putting together those estimates with \rife{xieta}, there exists $\tau$ (independent of $T$) such that  we have 
\begin{align*}
 & \|\mu(t)\|_{TV_k}+ [v(t)]_{\langle x^k\rangle}  +  \|Dv(t) \|_{L^\infty(\langle x\rangle^{-k})}   \leq \frac12  M \,,\qquad \forall t\in [\tau,T-\tau]
\\
& \qquad \qquad \qquad \hbox{where $M=   [v_T]_{\langle x^k\rangle} + \|Dv_T \|_{L^\infty(\langle x\rangle^{-k})}+ \|\mu_0\|_{TV_k} $}
\end{align*}
for   every $T>2\tau$. Iterating this estimate yields the exponential rate for the turnpike property, namely  \rife{turnest1} holds  for some $\omega, L>0$.
\end{proof}

%
%
%
%
%

\appendix

\section{Lipschitz bounds for Hamilton-Jacobi equations} \label{app:Lipschitz}

In this Appendix we prove different kind of oscillations estimates,  in the form of H\"older and Lipschitz bounds, for solutions of Hamilton-Jacobi equations.

In particular, we will prove the  Lipschitz bound stated in Proposition  \ref{lipest0}.   
For this kind of estimates, which are global in time, a key role is played by the existence of Lyapunov functions. This means, specifically, that the function $\langle x  \rangle^{k}$  (and eventually some truncation if needed)  are  supersolutions for large $|x|$.

\begin{lemma} \label{lemma:supersolution_property}
Assume \ref{B}, \ref{nu'}, and let $\phi ( x ) = \langle x  \rangle^{\gamma}$ with $\gamma\in(0,\sigma)$. For any $L_0 >0$,  there exists $K, \omega_0>0$, 
only depending on  $\alpha, \beta, |b(0)|, \gamma, L_0, \nu, d$, such that
\beq\label{Lambda1}
 \mathcal{L}_{L_0} \phi  ( x ):= - \mathcal{L} \phi (x) 
    +  b ( x )\cdott  D \phi ( x )  - L_{0} | D \phi ( x ) | \geq \omega_0\, \phi (x) - K\qquad \text{ for }  x  \in \rd\,.
\eeq
\end{lemma}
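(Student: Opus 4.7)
The plan is to show that among the three terms making up $\mathcal{L}_{L_0}\phi$, the confining drift $b(x)\cdot D\phi(x)$ produces a leading contribution of order $\langle x\rangle^{\gamma}$ (the correct order of $\phi$ itself), while the nonlocal term $-\mathcal{L}\phi(x)$ and the Lipschitz correction $-L_0|D\phi(x)|$ are strictly subordinate, of order $\langle x\rangle^{\gamma-1}$ at worst. Once this is established, for large $|x|$ any fraction of the leading term absorbs the others, and on a compact ball a constant $K$ handles the rest.

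First I would compute $D\phi(x)=\gamma\langle x\rangle^{\gamma-2}x$, so $|D\phi(x)|\leq \gamma\langle x\rangle^{\gamma-1}$, and $|D^2\phi(y)|\leq C_\gamma\langle y\rangle^{\gamma-2}$. From \ref{B} taken with $y=0$ (and local boundedness of $b$) I get $b(x)\cdot x\geq \alpha|x|^2-(\beta+|b(0)|)|x|$, hence
\[
b(x)\cdot D\phi(x)\ \geq\ \gamma\alpha\langle x\rangle^{\gamma} - C_1\langle x\rangle^{\gamma-1},
\]
while trivially $-L_0|D\phi(x)|\geq -L_0\gamma\langle x\rangle^{\gamma-1}$.

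The main technical step is the bound $|\mathcal{L}\phi(x)|\leq C(1+\langle x\rangle^{\gamma-1})$. I would split $\mathcal{L}=\mathcal{L}_1+\mathcal{L}^1$ at $|z|=1$. For $\mathcal{L}_1$, a second order Taylor expansion and the bound on $D^2\phi$, combined with $\int_{|z|\leq 1}|z|^2\nu(dz)<\infty$ from \ref{nu'} (using $\sigma<2$), give $|\mathcal{L}_1\phi(x)|\leq C(1+\langle x\rangle^{\gamma-2})$. For $\mathcal{L}^1$, when $|x|\geq 2$ I split again at $|z|=|x|/2$: on $\{1<|z|\leq|x|/2\}$ the mean value theorem and the bound on $|D\phi|$ yield $|\phi(x+z)-\phi(x)|\leq C\langle x\rangle^{\gamma-1}|z|$, and integrating against $\nu$ requires $\int_{1<|z|\leq|x|/2}|z|\,\nu(dz)$ to be uniformly bounded in $|x|$, which holds precisely because $\sigma>1$; on $\{|z|>|x|/2\}$ the bound $|\phi(x+z)-\phi(x)|\leq C(\langle x\rangle^{\gamma}+|z|^{\gamma})$ combines with the tail estimates $\int_{|z|>R}\nu(dz)\leq CR^{-\sigma}$ and $\int_{|z|>R}|z|^{\gamma}\nu(dz)\leq CR^{\gamma-\sigma}$ (the latter integrable since $\gamma<\sigma$) to yield a contribution of order $\langle x\rangle^{\gamma-\sigma}$. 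Both contributions are $\leq C\langle x\rangle^{\gamma-1}$, and the case $|x|<2$ is absorbed by smoothness of $\phi$.

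Combining all three estimates,
\[
\mathcal{L}_{L_0}\phi(x)\ \geq\ \gamma\alpha\langle x\rangle^{\gamma} - C_2(1+\langle x\rangle^{\gamma-1}).
\]
Fixing any $\omega_0\in(0,\gamma\alpha)$, the gap $(\gamma\alpha-\omega_0)\langle x\rangle^{\gamma}$ dominates $C_2\langle x\rangle^{\gamma-1}$ as soon as $\langle x\rangle\geq R_0$ for some $R_0$ depending only on $\omega_0$ and $C_2$; on the complementary ball $\{|x|\leq R_0\}$ both $\mathcal{L}_{L_0}\phi(x)$ and $\omega_0\phi(x)$ are uniformly bounded, and the inequality is satisfied by choosing $K$ sufficiently large. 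The main obstacle is really the nonlocal estimate, specifically the careful tracking of the three regimes $|z|\leq 1$, $1<|z|\leq|x|/2$, and $|z|>|x|/2$; the rest is bookkeeping of constants in the drift term and a routine compact/noncompact dichotomy.
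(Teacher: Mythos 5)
Your proof is correct and follows essentially the same strategy as the paper: explicit computation of $D\phi$, $D^2\phi$, a lower bound on $b(x)\cdot D\phi(x)$ producing the leading $\gamma\alpha\langle x\rangle^{\gamma}$ term, and the key intermediate estimate $|\mathcal L\phi(x)|\leq C(1+\langle x\rangle^{\gamma-1})$ obtained by splitting at $|z|=1$. The only cosmetic difference is in $\mathcal L^1$: you introduce an additional dichotomy $|z|\leq|x|/2$ versus $|z|>|x|/2$, whereas the paper handles both regimes at once via the mean-value theorem together with the Peetre-type bound $\langle x+\tau z\rangle^{\gamma-1}\leq c_\gamma(\langle x\rangle^{\gamma-1}+\langle z\rangle^{\gamma-1})$ (for $\gamma>1$; trivially $\leq 1$ for $\gamma\leq 1$); both yield the same conclusion and rely on $\sigma>\max(1,\gamma)$ to integrate $|z|\nu(dz)$ and $|z|^{\gamma}\nu(dz)$ over $\{|z|>1\}$.
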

\begin{proof}  We recall that $\langle x  \rangle = \sqrt{1 + | x |^{2} }$. Hence, by definition of $\phi$, we get that 
    $D \phi ( x ) = \gamma  \langle x\rangle ^{\gamma -1 }\frac x{ \langle x\rangle }$
 and
 $D^{2} \phi ( x ) = \gamma \langle x  \rangle^{\gamma -2}(I + ( \gamma -2 ) \frac x {\langle x  \rangle}\otimes \frac x{\langle x  \rangle})$. Thus,
    \begin{align*}
        & b ( x )\cdott  D \phi ( x )  = 
        (b ( x ) - b ( 0 )) \cdott   \frac{x}{\langle x  \rangle } \gamma \langle x  \rangle^{\gamma - 1}  
        +  b ( 0 )\cdott  \frac{x}{\langle x  \rangle } \gamma \langle x  \rangle^{\gamma -1}   \\
        &\geq \gamma \langle x  \rangle^{\gamma-1}  \left( \frac{\alpha | x |^{2} }{\langle x  \rangle } - (\beta + | b ( 0 ) |)   \right),
    \end{align*}
    and
    \begin{align*}
        &\mathcal{L} \phi (x) = \mathcal{L}_{1} ( x, \phi, D \phi ( x ) ) + \mathcal{L}^{1} ( x, \phi, D \phi ( x ) ) \\
        &\leq \| D^2 \phi  \|_{\infty}\int_{| z | \leq 1 } z^2 \nu ( dz )  + \int_{| z | \geq 1 } |\phi ( x+z ) - \phi ( x )| \nu ( dz ) 
        \end{align*}
We have that  $\| D^{2} \phi  \|_{\infty} \leq C$, so that  $\mathcal{L}_{1} ( x, \phi, D \phi ( x ) ) \leq C$ for some constant $C >0$. As for  $\mathcal{L}^{1}$, we use that
 \begin{align*}
 | \phi ( x+z ) - \phi ( x )| \leq | z | \sup_{\tau \in [ 0,1 ]} | D \phi ( \tau z + x ) | \leq \gamma\, |z| \, \langle x+\tau z\rangle^{\gamma-1}.
 \end{align*}
If $\gamma\leq 1$, $ \langle x+\tau z\rangle^{\gamma-1}\leq 1$. If $\gamma>1$, we estimate $\langle x+\tau z\rangle^{\gamma-1}\leq c_{\gamma} (\langle x\rangle^{\gamma-1}+ \langle z\rangle^{\gamma-1})$
for some constant $c_\gamma>1$, possibly depending on $\gamma$, and for any $\tau\in (0,1)$. Hence we get
\begin{align*}
\int_{| z | \geq 1 } |\phi ( x+z ) - \phi ( x )| \nu ( dz )  & \leq c_\gamma \langle x  \rangle^{\gamma-1} \int_{ | z | \geq 1 } | z | \nu ( dz ) +c_\gamma \int_{| z | \geq 1 } \langle z  \rangle^{\gamma} \nu ( dz ) \\ & 
    \leq C ( 1 + \langle x  \rangle^{\gamma-1}  )
\end{align*}
for some constant $C > 0$, where we used assumption \ref{nu'} and that $\sigma>\max(1, \gamma)$.

Going back to $\mathcal{L}_{L_0}$, we then find, for a possibly different $C$, 
    \begin{align*}
        \mathcal{L}_{L_0} [ \phi ] ( x )&=- \mathcal{L} \phi (x) +  b ( x )\cdot  D \phi ( x ) - L_{0} |D \phi ( x ) | \\
        &\geq - C (1 + \langle x  \rangle^{\gamma -1}   ) + \gamma \langle x  \rangle^{\gamma-1}  \left( \frac{\alpha | x |^{2} }{\langle x  \rangle } - (\beta + | b ( 0 ) |+ L_0) \right) 
          \\
        &\geq    \gamma \langle x  \rangle^{\gamma-1}  \big( \alpha | x |   -  (\beta + | b ( 0 ) |+ L_0 + C ) \big)  - C.
    \end{align*}
  We conclude that \rife{Lambda1} holds for $\omega_0< \alpha\,\gamma$ and some $K>0$.
\end{proof}

The next tool is a crucial lemma which  exploits the local diffusivity of the operator in doubling variables arguments. In the case of nonlocal operators  such as   
\rife{def:nonlocal_diffusion},  this kind of argument was  developed  in \cite{barles+al1, barles+al2}. Here we use an improved version, which is needed for long time estimates; this is a slight extension of   
\cite[Lemma 3.4]{porretta2024decay}.  We recall the notation \rife{notL} used for the operator $\cL$.

\begin{lemma} \label{Estimates_non_local_operator}
Let $\mathcal L$ be defined by \rife{def:nonlocal_diffusion}, where $\nu$ satisfies  
\beq\label{coerc}
\exists \lambda >0\,  :\, \quad     \frac{d\nu}{dz}\geq \lambda \, \frac1{|z|^{d+\sigma}}
\qquad \forall z \in B_1\,.
\eeq
Suppose that $(x,y)$ is  a maximum point of the function
$$
({\tilde x}, {\tilde y})\mapsto u(\tilde x)-u(\tilde y) - \zeta(\tilde x,\tilde y)\,,\quad \zeta:= [\vfi(\tilde x)+ \vfi(\tilde y)] \psi(|\tilde x-\tilde y|) + F(\tilde x)+ F(\tilde y)
$$
where  $\vfi$ and $F$ are $C^2$ functions, $\vfi>0$,  and  $\psi:[0,\infty)\to \R_+$ is a positive, increasing and  concave function which is $C^2$  in a neighborhood of $|x-y|$. 

Then, for every $\de\geq 0$ such that $\de\leq \left(\frac{|x-y|}2 \wedge 1\right)$, we have
 \beq\label{nonlocal_precise}
 \begin{split}
&  \mathcal{L} ( x,u(x),D_{x} \zeta(x,y) ) -  \mathcal{L} ( y,u(y),-D_{y} \zeta(x,y) ) \leq  \\
& \qquad \leq \psi(|x-y|) \left\{ \mathcal{L}\vfi(x) + \mathcal{L}\vfi(y)\right\}+ \mathcal{L} F(x)+ \mathcal{L}F(y) \\
&  \quad + 
 \lambda\, C_{d,\sigma}  |x-y|^{2-\sigma} \, \psi'(|x-y|) \left( \sup_{\xi\in x+ B_\de} |D\vfi(\xi)| + \sup_{\xi\in y+ B_\de} |D\vfi(\xi)|\right)  
\\
& \quad 
+   4\lambda  \left( \inf_{\xi\in x+ B_\de} \vfi(\xi) + \inf_{\xi\in y+ B_\de} \vfi(\xi)\right)   \int_0^1 (1-s) \int_{B_\de} \psi''( \xi_{s,z}) |\widehat{x-y}\cdott z|^2  \frac{ dz}{|z|^{d+\sigma}} ds\, 
\end{split}
 \eeq
 where $\xi_{s,z}=|x-y|+ 2s(\widehat{x-y}\cdott z)$, $B_\de:= \{ z\in \R^d\,:\, |z|<\de\}$ and $C_{d,\sigma}$ is a constant only depending on $d,\sigma$.
 \end{lemma}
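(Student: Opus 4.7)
The plan is to run a viscosity-style doubling-variables argument, followed by a careful second-order Taylor analysis of the smooth function $\zeta$. First, the maximum property of $(x,y)$ gives the pointwise comparisons $u(x+z)-u(x) \leq \zeta(x+z,y)-\zeta(x,y)$ and $u(y+z)-u(y) \geq \zeta(x,y)-\zeta(x,y+z)$ for every $z\in\R^d$, obtained by freezing $\tilde y=y$ and $\tilde x=x$ in the maximum inequality. Plugging these into the two nonlocal integrals (with $\zeta(\cdot,y)$ and $-\zeta(x,\cdot)$ playing the role of test functions in the $|z|\leq\delta$ part, and the pointwise bounds handling $|z|>\delta$), and matching the $\mathbf 1_{|z|\leq 1}$ gradient corrections across $|z|=\delta$, one obtains the fundamental reduction
\[
\mathcal{L}(x,u(x),D_x\zeta) - \mathcal{L}(y,u(y),-D_y\zeta) \leq \mathcal{L}[\zeta(\cdot,y)](x) + \mathcal{L}[\zeta(x,\cdot)](y),
\]
which involves only the smooth function $\zeta$.

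Next, I would decompose $\zeta(\cdot,y) = \varphi(\cdot)\psi(|\cdot-y|) + \varphi(y)\psi(|\cdot-y|) + F(\cdot) + F(y)$ and invoke the nonlocal Leibniz-type identity
\[
\mathcal{L}[\varphi\psi(|\cdot-y|)](x) = \psi(|x-y|)\mathcal{L}\varphi(x) + \varphi(x)\mathcal{L}[\psi(|\cdot-y|)](x) + R(x),
\]
with remainder $R(x) = \int[\varphi(x+z)-\varphi(x)][\psi(|x+z-y|)-\psi(|x-y|)]\,\nu(dz)$, together with its symmetric counterpart at $y$. The gradient corrections in $\mathcal{L}[\psi(|\cdot-y|)](x)$ and $\mathcal{L}[\psi(|x-\cdot|)](y)$ cancel pairwise by the antisymmetry $z\leftrightarrow -z$, leaving the symmetric combination $I_\psi := \int[\psi(|x-y+z|)+\psi(|x-y-z|)-2\psi(|x-y|)]\,\nu(dz)$, multiplied by $\varphi(x)+\varphi(y)$. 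The cross remainder $R$ (and its $y$-analogue) is controlled using concavity of $\psi$: for $|z|\leq\delta\leq|x-y|/2$ one has $|\psi(|x+z-y|)-\psi(|x-y|)|\leq \psi'(|x-y|)|z|$ and $|\varphi(x+z)-\varphi(x)|\leq|z|\sup_{x+B_\delta}|D\varphi|$, and together with $\int_{B_\delta}|z|^2\,\nu(dz)\lesssim \lambda C_{d,\sigma}\delta^{2-\sigma}\leq \lambda C_{d,\sigma}|x-y|^{2-\sigma}$ this yields the advertised $\psi'|x-y|^{2-\sigma}\sup|D\varphi|$ term, with the $|z|>\delta$ portion absorbed into the same bound using $\sigma>1$ integrability and a triangle-inequality estimate.

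The delicate step is extracting the negative piece from $[\varphi(x)+\varphi(y)]I_\psi$ on $B_\delta$. I would second-order Taylor-expand $z\mapsto\psi(|x-y+z|)$ around $z=0$: its Hessian splits into a radial component $\psi''(|x-y+sz|)\widehat{x-y+sz}\otimes\widehat{x-y+sz}$ (non-positive by concavity of $\psi$, hence the ``good'' negative contribution) and a transverse component $(\psi'/|x-y+sz|)(I-\widehat{x-y+sz}\otimes\widehat{x-y+sz})\geq 0$. Linearizing the radial argument $|x-y+sz|$ along the direction $\widehat{x-y}$ produces the intermediate variable $\xi_{s,z}=|x-y|+2s(\widehat{x-y}\cdot z)$, the factor $2$ arising from the combined $\pm z$ symmetric expansions; then using only the lower density bound \eqref{coerc} on $\nu$ restricted to $B_\delta$ turns the radial piece into the advertised $4\lambda(\inf_{x+B_\delta}\varphi+\inf_{y+B_\delta}\varphi)\int_0^1(1-s)\int_{B_\delta}\psi''(\xi_{s,z})|\widehat{x-y}\cdot z|^2\,dz/|z|^{d+\sigma}\,ds$ expression. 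The transverse piece, of size $O(\psi'(|x-y|)|z|^2/|x-y|)$, is of the same order as the cross-term estimate and is absorbed into the $|x-y|^{2-\sigma}\psi'$ bound; the $|z|>\delta$ portion of $I_\psi$, of indeterminate sign, is treated similarly using concavity together with the parallelogram-type estimate $|x-y+z|+|x-y-z|\leq 2(|x-y|^2+|z|^2)^{1/2}$.

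The main technical obstacle is the precise identification of the intermediate radial argument $\xi_{s,z}$ of $\psi''$ and the careful separation of the useful radial (negative) component of the Hessian of $\psi(|\cdot|)$ from its transverse (non-negative) component---the latter must be absorbed into the cross-term estimate rather than turned into a new ``bad'' positive term. The restriction $\delta\leq|x-y|/2\wedge 1$ in the hypothesis is forced by two requirements: $\delta\leq|x-y|/2$ keeps the Taylor argument of $\psi$ inside the $C^2$-region and preserves the linearization $\widehat{x-y+sz}\approx \widehat{x-y}$, while $\delta\leq 1$ ensures consistency with the cutoff $\mathbf 1_{|z|\leq 1}$ in the definition of $\mathcal{L}$.
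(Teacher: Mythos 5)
Your reduction to $\mathcal{L}[\zeta(\cdot,y)](x)+\mathcal{L}[\zeta(x,\cdot)](y)$ (by freezing one variable at a time in the maximum inequality) is valid, but from there the plan diverges from the paper and does not close. The paper does \emph{not} take $z'=z$ (or $z'=0,\,z=0$) on the ``good'' fractional-Laplacian part of $\nu$ restricted to $B_\delta$; it takes $z'=Az$ with $A=I_d-2\widehat{x-y}\otimes\widehat{x-y}$. This reflection is what makes the argument work: since $(x+z)-(y+Az)=\widehat{x-y}\,\bigl(|x-y|+2\,\widehat{x-y}\cdot z\bigr)$ is exactly colinear with $\widehat{x-y}$ and has nonnegative length for $|z|\le\delta\le|x-y|/2$, the map $z\mapsto\psi(|(x+z)-(y+Az)|)$ is a genuine one-variable composition $\psi(|x-y|+2\widehat{x-y}\cdot z)$, and the second-order Taylor remainder of that scalar function is, verbatim, $4\int_0^1(1-s)\psi''(\xi_{s,z})|\widehat{x-y}\cdot z|^2\,ds$. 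That is the \emph{only} source of the factor $2$ in $\xi_{s,z}$; it cannot come from ``combined $\pm z$ symmetric expansions'' of $\psi(|x-y\pm z|)$, which produce intermediate arguments $|x-y\pm sz|$ (coefficient $s$, not $2s$), a genuinely different object.

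More importantly, your Taylor route creates a term that the lemma does not have and that cannot be absorbed. The Hessian of $z\mapsto\psi(|x-y+z|)$ has a transverse component $\frac{\psi'(|w|)}{|w|}\bigl(I-\hat w\otimes\hat w\bigr)$, which is $\ge 0$ and is multiplied by $\varphi(x)+\varphi(y)$ (not by $\sup|D\varphi|$). After integration against $\lambda\,dz/|z|^{d+\sigma}$ on $B_\delta$ this contributes a term of order $\bigl(\varphi(x)+\varphi(y)\bigr)\,\psi'(|x-y|)\,\delta^{2-\sigma}/|x-y|$, i.e.\ at best $\bigl(\varphi(x)+\varphi(y)\bigr)\,\psi'(|x-y|)\,|x-y|^{1-\sigma}$ when $\delta\sim|x-y|$. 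That is strictly worse than the advertised $\sup|D\varphi|\cdot\psi'(|x-y|)\,|x-y|^{2-\sigma}$ both in the power of $|x-y|$ (since $|x-y|^{1-\sigma}\gg|x-y|^{2-\sigma}$ for small $|x-y|$) and in carrying $\varphi$ rather than $D\varphi$. In the paper this issue simply never arises: because $z'=Az$ collapses the argument of $\psi$ to a scalar, there is no transverse direction and hence no $\psi'/|x-y|$ Hessian term. You cannot reach the lemma's conclusion by taking $z'=z$ (or $z'=0$) on $B_\delta$ and Taylor-expanding $\psi(|\cdot|)$; the reflection $z'=Az$ is an essential structural ingredient, not an optional bookkeeping device.
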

 
 \vskip1em
 \begin{remark} 
Despite the generality of the statement,  we mostly use it for the case of  constant $\vfi(x)$, providing with 
a simplified   conclusion where only $\psi''$ plays a role. 
 \end{remark}
 
\begin{proof}
We follow the same strategy as in  \cite[Lemma 3.4]{porretta2024decay}, where the result was proved for the case of constant $\vfi$. For the reader's convenience, we give anyway all details here. First we note that, by definition of $\zeta$,
\begin{align*}
D_x\zeta(x,y) & = D\vfi(x)  \psi(|x-y|) + DF(x) +   [\vfi(x)+ \vfi(y)] \psi'(|x-y|) \widehat{x-y} \,
\\
D_y\zeta (x,y)& = D\vfi(y)  \psi(|x-y|) + DF(y) -  [\vfi(x)+ \vfi(y)] \psi'(|x-y|) \widehat{x-y}\,,
\end{align*}
where $\widehat{x-y}:= \frac{x-y}{|x-y|} $. Moreover, from the maximality condition we have 
\beq\label{zz'}
u(x)-u(y) - \zeta(x,y) \geq u(x+z)-u(y+z')- \zeta(x+z, y+z' )\quad \forall z,z'\in \rd\,.
\eeq
Taking $z'=z$ yields
\beq\label{maj1}
\begin{split}
& [u ( x+z ) -u( x )]- [u (y+z ) -u(y )] \\
& \qquad\qquad \leq   \left\{ \vfi(  x+z ) -\vfi(  x )+  \vfi ( y+z ) -\vfi(  y )]  \right\} \psi(|x-y|)
\\
& \qquad\qquad\quad +   \left\{ F(x+z ) - F(  x )+  F(y+z ) -F(y )]  \right\} \qquad \forall z\in \R^d\,.
\end{split}
\eeq
Thanks to assumption \rife{coerc}, for small $|z|$ we can write $\nu =  \lambda \frac{dz}{|z|^{d+\sigma}}+ \mu$, where $\mu$ is a nonnegative measure (and  is itself a Levy measure).  Using that 
\beq\label{splitnu}
\nu= \nu   {1}_{B_\de^c} + \mu  {1}_{B_\de} + \lambda \frac{dz}{|z|^{d+\sigma}}  {1}_{B_\de}\,,
\eeq
we split the integrals in $\mathcal{L}$ into three terms.  For the first two measures, we only use \rife{maj1},  which implies
\beq\label{coerc1}
\begin{split}
& \int_{\R^d} \{ u (  x+z ) -u( x )  -   (D_x\zeta \cdott  z  )   {1}_{| z | \leq 1} \}  \tilde \nu( dz ) 
\\
& \qquad \qquad - \int_{\R^d} \{u ( y+z ) -u(  y ) +   (D_y\zeta\cdott  z)    {1}_{| z | \leq 1} \}\tilde \nu ( dz )
\\ & \leq  \psi(|x-y|)  \int_{\R^d} \left\{ \vfi( x+z ) -\vfi(  x )  - (D\vfi(x)\cdott z )   {1}_{| z | \leq 1}\right\}\tilde \nu(dz)  
\\
& \quad +  \psi(|x-y|)  \int_{\R^d} \left\{ \vfi(  y+z ) -\vfi(  y )  - (D\vfi(y)\cdott z)   {1}_{| z | \leq 1} \right\}\tilde \nu(dz) 
\\ & \quad  +    \int_{\R^d} \left\{ F(  x+z ) -F(  x )  - (DF(x)\cdott z )   {1}_{| z | \leq 1}\right\}\tilde \nu(dz)  
\\
& \quad +    \int_{\R^d} \left\{ F(  y+z ) -F(  y )  - (DF (y)\cdott z)   {1}_{| z | \leq 1} \right\}\tilde \nu(dz) 
\end{split}
\eeq
applied for both $\tilde \nu= \nu   {1}_{B_\de^c}$ and  $\tilde \nu= \mu   {1}_{B_\de}$. 
In the remaining term, we use \rife{zz'} with $z'=Az$, where $A$ is the matrix defined in $\R^d$ as
$$
A:= I_d- 2 (\widehat{x-y}\otimes \widehat{x-y})\,.
$$
Hence we have  
\beq\label{maj2}
u(x)-u(y) - \zeta(x,y) \geq u(x+z)-u(y+Az)- \zeta(x+z, y+Az )\,.
\eeq
Since $A$ is an orthogonal matrix, we have $|Az|=|z|$; then, using the rotational invariance of the  fractional Laplacian kernel, 
we have
\beq\label{rotid}
\begin{split}
& \int_{B_\de} \left\{ u (  x+z ) -u(  x )  -    D_x\zeta \cdott  z\right\}   \frac{dz}{|z|^{d+\sigma}} -
\int_{B_\de} \{u (  y+ z ) -u(  y ) +   D_y\zeta \cdott   z   \} \frac{dz}{|z|^{d+\sigma}} 
\\
&= \int_{B_\de} \left\{ u (  x+z ) -u(  x )  -    D_x\zeta \cdott  z\right\}   \frac{dz}{|z|^{d+\sigma}} -
\int_{B_\de} \{u (  y+Az ) -u(  y ) +   D_y\zeta \cdott A z   \} \frac{dz}{|z|^{d+\sigma}} \,.
\end{split}
\eeq
Now we develop \rife{maj2} with our definition of $\zeta$, which yields
\beq\label{maj2bis}
\begin{split}
& [u ( x+z ) -u(  x )]- [u ( y+Az ) -u(  y )] \\
&  \leq  \psi(|x-y|) [ \vfi(  x+z )  - \vfi(x)+  \vfi ( y+Az ) - \vfi(y)]  
\\
& \quad + 
  [\psi(|x-y + z-Az|) -\psi(|x-y|)] \, [ \vfi(  x+z )   +  \vfi ( y+Az ) ] 
\\
& \qquad\qquad\quad +    \left\{ F(x+z ) -F(x )+  F(y+Az ) -F(y )]  \right\} \qquad \forall z\in \R^d\,.
\end{split}
\eeq
Using \rife{maj2bis} in the right-hand side of \rife{rotid} we estimate
\begin{align*}
& \int_{B_\de} \left\{ u (  x+z ) -u(  x )  -    D_x\zeta \cdott  z\right\}   \frac{dz}{|z|^{d+\sigma}} -
\int_{B_\de} \{u (  y+ z ) -u(  y ) +   D_y\zeta \cdott   z   \} \frac{dz}{|z|^{d+\sigma}} 
\\ &
\leq   \psi(|x-y|) \left\{ \int_{B_\de}   \left\{ \vfi(  x+z ) -\vfi(  x )  - D\vfi(x)\cdott z  \right\} \frac{dz}{|z|^{d+\sigma}}  \right. 
\\ & \qquad\qquad\qquad\quad  \quad \left. +    \int_{B_\de} \left\{ \vfi(  y+Az ) -\vfi(  y )  - D\vfi(y)\cdott Az  \right\} \frac{dz}{|z|^{d+\sigma}}  \right\}
\\ & 
 +   \int_{B_\de} \left\{ F(x+z ) -F(x )  - DF(x)\cdott z  \right\} \frac{dz}{|z|^{d+\sigma}} 
+    \int_{B_\de} \left\{ F(y+Az ) -F(y )  - DF(y) \cdott Az  \right\} \frac{dz}{|z|^{d+\sigma}} 
\\ & 
\quad +    \int_{B_\de}   \, [ \vfi(  x+z )   -\vfi(x)  +  \vfi ( y+Az )-\vfi(y)  ]   \psi'(|x-y|) \widehat{x-y}\cdott(z-Az) \frac{dz}{|z|^{d+\sigma}} 
\\ 
&
 \quad +      \int_{B_\de} \left\{  \psi(|x-y + z-Az|)- \psi(|x-y|) -  \psi'(|x-y|) \widehat{x-y}\cdott(z-Az) \right\} {\mathcal R}_\vfi \frac{dz}{|z|^{d+\sigma}} \,.
\end{align*}
where $ {\mathcal R}_\vfi := [ \vfi(  x+z )   +  \vfi ( y+Az ) ] $.
\vskip0.4em

We multiply the above inequality by $\lambda$ and we combine it with \rife{coerc1} used for  both $\tilde \nu= \nu   {1}_{B_\de^c}$ and  $\tilde \nu= \mu   {1}_{B_\de}$. 
Recalling \rife{splitnu} we have proved that
 \beq\label{alltoge}
 \begin{split}
  &    \qquad  \mathcal{L}   ( x,u(x),D_{x} \zeta(x,y) ) -  \mathcal{L} ( y,u(y),-D_{y} \zeta(x,y) )    \leq  \\
  & \quad  \qquad   \quad  \leq  \psi(|x-y|) \left\{\mathcal{L} \vfi(x)  + \mathcal{L}  \vfi(y)  \right\}+ \mathcal{L}  F(x)  + \mathcal{L}  F(y) 
 \\ & 
  +    \int_{B_\de}   \, [ \vfi(  x+z )   -\vfi(x)  +  \vfi ( y+Az )-\vfi(y)  ]   \psi'(|x-y|) \widehat{x-y}\cdott(z-Az) \frac{\lambda \,dz}{|z|^{d+\sigma}} 
 \\ 
&
  +     \int_{B_\de} \left\{  \psi(|x-y + z-Az|)- \psi(|x-y|) -  \psi'(|x-y|) \widehat{x-y}\cdott(z-Az) \right\}  {\mathcal R}_\vfi  \frac{\lambda\, dz}{|z|^{d+\sigma}} \,.
\end{split}
 \eeq
Now we estimate last two terms. To this goal we notice that the precise choice of $A$ implies
$$
x-y + z-Az= \widehat{x-y} \left(  |x-y| + 2 (\widehat{x-y}) \cdott z  \right)
$$
and moreover, since $|z|\leq \de \leq (\frac{|x-y|}2\wedge 1)$, we have  $0\leq  |x-y| + 2 (\widehat{x-y})\cdott z$.
Hence we estimate, by Taylor's expansion,
\begin{align*}
&   \psi(|x-y + z-Az|)- \psi(|x-y|) -  \psi'(|x-y|) \widehat{x-y}\cdott(z-Az) 
\\
& =    \psi(|x-y| + 2 (\widehat{x-y}\cdott z))- \psi(|x-y|) -  \psi'(|x-y|) 2(\widehat{x-y}\cdott z)  
\\
& = 4 \int_0^1 (1-s) \left( \psi''( |x-y|+ 2s(\widehat{x-y}\cdott z)) |\widehat{x-y}\cdott z|^2\right)ds
\end{align*}
Notice that this latter term is negative since $\psi$ is concave. Using that 
$$
\vfi(  x+z )   +  \vfi ( y+Az )  \geq \inf_{\xi\in x+ B_\de} \vfi(\xi) + \inf_{\xi\in y+ B_\de} \vfi(\xi) \quad \forall z\in B_\de
$$
we estimate
\begin{align*}
& \int_{B_\de} \left\{  \psi(|x-y + z-Az|)- \psi(|x-y|) -  \psi'(|x-y|) \widehat{x-y}\cdott(z-Az) \right\}  {\mathcal R}_\vfi \frac{\lambda\, dz}{|z|^{d+\sigma}} 
\\ & 
\leq  \left( \inf_{\xi\in x+ B_\de} \vfi(\xi) + \inf_{\xi\in y+ B_\de} \vfi(\xi)\right)  \int_0^1 (1-s) \int_{B_\de} \psi''( |x-y|+ 2s(\widehat{x-y}\cdott z)) \frac{|\widehat{x-y}\cdott z|^24\lambda }{|z|^{d+\sigma}} dzds\,.
\end{align*}
Similarly we estimate
\begin{align*}
 & \int_{B_\de}   \, [ \vfi(  x+z )   -\vfi(x)  +  \vfi ( y+Az )-\vfi(y)  ]   \psi'(|x-y|) \widehat{x-y}\cdott(z-Az) \frac{\lambda \,dz}{|z|^{d+\sigma}}  \\
 & \quad  \leq   \left( \sup_{\xi\in x+ B_\de} |D\vfi(\xi)| + \sup_{\xi\in y+ B_\de} |D\vfi(\xi)|\right) \psi'(|x-y|)
 2\lambda   \int_{B_\de} |z|^2 \frac{dz}{|z|^{d+\sigma}} 
 \\
 &\quad  \leq \lambda\, C_{\sigma, d} \left( \sup_{\xi\in x+ B_\de} |D\vfi(\xi)| + \sup_{\xi\in y+ B_\de} |D\vfi(\xi)|\right) \psi'(|x-y|)|x-y|^{2-\sigma}\,.
\end{align*}

We conclude therefore the estimate from \rife{alltoge}, obtaining \rife{nonlocal_precise}.
\end{proof}

Now we prove the global (in time) H\"older and Lipschitz estimates.

\begin{proposition}\label{lipest} Assume \ref{nu'}, \ref{B}, \ref{H1}, and $f \in C_b (Q_T)$.
Let $u \in  C ( Q_{T} ) \cap L^{\infty} (0,T; L^{\infty} (\langle x \rangle^{-\beta}))$ (for some $\beta<\sigma$) be  a viscosity solution of \eqref{eqn:parabolic_HJB}. 

If $u_0 \in C^{0,\theta}(\R^d)$ for some $\theta\in (0,1]$, then  $u(t) \in C^{0,\theta}(\R^d)$ for $t>0$, and  there is a constant $C>0$ independent of $T>0$, such that
\begin{align*}
        | u  ( t,x ) - u  ( t,y ) | 
        \leq C | x-y |^\theta, \qquad x,y \in \R^{d}, t \in  [ 0,T ),
\end{align*}
where $C$ depends on $\theta, d, \sigma, \alpha, \beta, |b(0)|, C_H$ and on $[u_0]_\theta$ and $C_f= \sup\limits_{t,x,y}[ f(t,x)-f(t,y)]$.
\end{proposition}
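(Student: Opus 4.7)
The plan is to run the Ishii--Lions doubling-of-variables method, adapted to nonlocal operators through Lemma \ref{Estimates_non_local_operator}, and supplemented with a Lyapunov $\langle x\rangle^k$-penalization for some $k\in(\beta,\sigma)$ to handle the unbounded domain and guarantee the attainment of the supremum. Concretely, for a constant $L$ (to be chosen depending only on the data) and $\varepsilon\to 0^+$, I will consider
$$
\Psi(t,x,y) := u(t,x) - u(t,y) - L\,\psi(|x-y|) - \varepsilon\bigl(\langle x\rangle^k + \langle y\rangle^k\bigr)
$$
on $[0,T]\times\R^d\times\R^d$, with $\psi(r) = r^\theta$ in the H\"older case $\theta\in(0,1)$ and with a strictly concave $C^2$-regularization of $r\mapsto r$ in the Lipschitz case $\theta=1$. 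The aim is to show $\sup\Psi\leq 0$ for $L$ large enough independently of $\varepsilon$, and then let $\varepsilon\to 0$. Since $k>\beta$, the hypothesis $u\in L^\infty(\langle x\rangle^{-\beta})$ makes $\Psi$ bounded above and forces it to $-\infty$ at spatial infinity.

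If on the contrary $\sup\Psi>0$, it is attained at some $(t_0,x_0,y_0)$ with $x_0\neq y_0$; the case $t_0=0$ is immediately ruled out by $[u_0]_\theta\leq L$. For $t_0>0$, writing $\zeta(x,y) = L\psi(|x-y|) + \varepsilon(\langle x\rangle^k + \langle y\rangle^k)$ and subtracting the parabolic viscosity sub-/supersolution inequalities at the doubled-variable maximizer (the time derivatives cancel since $\Psi$ depends on $t$ only through $u$), I obtain
\begin{align*}
&-\bigl[\mathcal{L}(x_0,u,D_x\zeta) - \mathcal{L}(y_0,u,-D_y\zeta)\bigr] + b(x_0)\cdot D_x\zeta + b(y_0)\cdot D_y\zeta \\
&\qquad\qquad + H(x_0,D_x\zeta) - H(y_0,-D_y\zeta) \leq f(t_0,x_0) - f(t_0,y_0).
\end{align*}
Then I bound the nonlocal difference by Lemma \ref{Estimates_non_local_operator} (taking $\vfi\equiv 1$, $F(x)=\varepsilon\langle x\rangle^k$, $\delta=\tfrac12|x_0-y_0|\wedge 1$), which produces the crucial negative smoothing contribution of order $L|x_0-y_0|^{2-\sigma}\psi''(|x_0-y_0|)$ from the concavity of $\psi$; bound the drift from below via \ref{B} as $\geq L\psi'(|x_0-y_0|)(\alpha|x_0-y_0|-\beta) - o_\varepsilon(1)$, the $\varepsilon$-part of the drift being nonnegative outside a compact set by \eqref{consb1}; bound the Hamiltonian via \ref{H1} by $2C_H + 2C_H L\psi'(|x_0-y_0|) + O(\varepsilon)$; and use $|f(t_0,x_0)-f(t_0,y_0)|\leq C_f$.

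Sending $\varepsilon\to 0$, these combine into the schematic inequality
$$
L\alpha\,r_0\,\psi'(r_0) + c_0\,L\,r_0^{2-\sigma}\bigl(-\psi''(r_0)\bigr) \leq L(\beta + 2C_H)\psi'(r_0) + 2C_H + C_f,\qquad r_0 := |x_0-y_0|,
$$
which for $\theta\in(0,1)$ fails once $L$ is large enough: when $r_0\geq 1$ the coercive term $L\alpha r_0\psi'(r_0)\sim L\alpha\theta r_0^\theta$ dominates, while when $r_0\leq 1$ the nonlocal smoothing $\sim c_0 L\theta(1-\theta)r_0^{\theta-\sigma}$ dominates, being more singular than $\psi'(r_0)\sim r_0^{\theta-1}$ because $\sigma>1$.

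I expect the hard part to be the Lipschitz endpoint $\theta=1$: with $\psi(r)=r$ one has $\psi''\equiv 0$ and all nonlocal smoothing is lost, so the Hamiltonian contribution $2C_H L$ cannot in general be absorbed by the drift alone (which would require $\alpha$ large relative to $C_H$, not assumed here). I would circumvent this by working with $\psi(r)=r-\eta\chi(r)$, where $\chi$ is a smooth bounded concave perturbation making $\psi''(r)<0$ on every compact subset of $(0,\infty)$; the nonlocal smoothing provided by Lemma \ref{Estimates_non_local_operator} is then restored uniformly in small $\eta$, and $\eta\to 0$ is taken at the end to recover the pure Lipschitz bound. All bounds being pointwise and independent of $T$ (no explicit time dependence in $\Psi$), the resulting constant depends only on $\theta,d,\sigma,\alpha,\beta,|b(0)|,C_H,c_{\sigma,d},[u_0]_\theta$ and $C_f$, as claimed.
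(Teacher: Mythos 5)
Your overall strategy is the same as the paper's: doubling of variables with a Lyapunov penalization $\varepsilon(\langle x\rangle^k+\langle y\rangle^k)$ and the nonlocal Ishii--Lions estimate of Lemma \ref{Estimates_non_local_operator}. The decisive divergence is your choice of $\psi$, and this is where the argument breaks down.

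With $\psi(r)=r^\theta$ the schematic inequality you derive reads, after dividing through by $L\theta r_0^{\theta-1}$,
\begin{equation*}
\alpha\,r_0 + c_0(1-\theta)\,r_0^{1-\sigma} \ \leq\  (\beta+2C_H) + \frac{2C_H+C_f}{L\,\theta\, r_0^{\theta-1}}.
\end{equation*}
Your claim is that taking $L$ large forces a contradiction for all $r_0>0$. But the function $g(r):=\alpha r+c_0(1-\theta)r^{1-\sigma}$ attains a finite minimum at $r_*=\bigl(c_0(1-\theta)(\sigma-1)/\alpha\bigr)^{1/\sigma}$; if $g(r_*)<\beta+2C_H$ (which is not excluded by the hypotheses — there is no assumed relationship between $\alpha$, $\beta$, $C_H$, and the ellipticity constant $c_0$, and $c_0(1-\theta)\to 0$ as $\theta\to 1$), then for $r_0$ near $r_*$ the left side is strictly smaller than $\beta+2C_H$, while the $L^{-1}$ remainder can be dropped, so the inequality holds for all $L$ and no contradiction is obtained. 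In other words, the smoothing term wins for $r_0$ small, the drift wins for $r_0$ large, but there is an intermediate band of $r_0$ where neither does, and that band is precisely where the maximizer may sit. The paper avoids this by taking $\psi(r)=C_1(1-e^{-C_2 r^\theta})+Lr^\theta$: the extra parameters $C_1,C_2$ let one first fix $C_2$ so that $4\lambda\theta C_2\xi^\theta$ dominates $c(\xi^{\sigma-1}\vee\xi)$ on $(0,r_1]$, and then take $C_1$ large so that the resulting smoothing term $\sim \varepsilon_0 C_1 r^{\theta-\sigma}$ overwhelms the fixed right-hand side on the entire bounded range $r\leq r_1$, independently of the intermediate-band obstruction.

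The same obstruction affects your treatment of $\theta=1$. With $\psi(r)=r-\eta\chi(r)$ the smoothing term scales like $L\eta$, so it does \emph{not} persist uniformly as $\eta\to 0$: it vanishes linearly. Keeping $\eta$ fixed does not help either, because $\psi'>0$ forces $\eta$ to be bounded above, so one cannot make $\eta$ large enough to dominate $\beta+2C_H$ in the intermediate band. The paper handles $\theta=1$ in two steps: first a global bound $|u(t,x)-u(t,y)|\leq K(1+|x-y|)$ via the Hölder machinery with $\psi(r)=C_1(1-e^{-C_2 r^\theta})+Lr$, and then a genuine Lipschitz bound \emph{only for} $|x-y|<r_0$ using $\psi(r)=C_1(r-\rho r^{1+\theta})$ with $\theta<\sigma-1$ and $r_0^\theta=1/(2\rho)$, so that the boundary $|x-y|=r_0$ of the doubling region is excluded a priori and the smoothing coefficient $C_1\rho$ can be taken as large as needed without destroying monotonicity of $\psi$. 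Your one-shot regularization does not reproduce this structure. Finally, a more minor point: you omit a $\varepsilon/(T-t)$ penalty (or some equivalent localization), so the supremum of $\Psi$ could in principle be attained at $t=T$ where the forward parabolic viscosity inequality is not available; the paper includes precisely this term for that reason.
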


\begin{proof}   We proceed in two steps. First we prove  H\"older  estimates for $u$, and then the Lipschitz estimate afterwards. 

Let $\phi(x)= \langle x\rangle^{\tilde \beta}$ for some $ \tilde \beta \in ((\beta\vee 1), \sigma)$, so that $u(t,x)=o(\phi(x))$ as $|x|\to \infty$, uniformly in time.
We define
\begin{align}
    \Phi ( t,x,y ) = u ( t, x ) - u ( t, y )  -  \psi ( |x-y| ) - \varepsilon ( \phi ( x ) + \phi ( y ) ) -\frac \vep{T-t},
    \label{defun:Phi}
\end{align}
where  $\psi$ is an increasing  concave  function, to be defined later. 

Our aim is to show that $\Phi ( t,x,y ) \leq 0$ for all $x,y \in \R^{d}$, $t<T$, and for every  $\vep$ sufficiently small.

{\it Step 1.} \quad   The case $\theta\in(0,1)$.   Here we choose in \rife{defun:Phi}  
$$
\psi ( r ) := C_1(1 - e^{- C_{2} r^{\theta}} )+ L r^\theta 
$$
where $C_1, C_2$ will be large constants to be fixed later, and 
\beq\label{lgeq}
L\geq [u_0]_\theta\,.
\eeq
Assume by contradiction that, for some $\varepsilon > 0$, 
$\sup_{x,y \in \R^d, t\in (0,T) } \Phi ( t,x,y ) > 0$. 
Since $u= o(\phi)$ as $|x|\to \infty$, 
the supremum of $\Phi$ is achieved at some point $ ( t, x , y  )$ with $t<T$, and,  since it is a positive maximum,  it occurs with $x \neq y$ and $t\neq 0$, due to  \rife{lgeq}. 

We denote 
\begin{align}
  \zeta   ( x,y ) :=    \psi ( |x-y| ) + \varepsilon ( \phi ( x ) + \phi ( y ) ),
    \label{defun:varphi}
\end{align}
and we denote hereafter $\widehat{x-y}=\frac{x-y}{| x-y | }$, $r:=|x-y|$. Hence we have
\begin{align} \label{defun:phi_derivative}
    D_{x} \zeta =   \psi' (r)\, \widehat{x-y} + \varepsilon D \phi ( x ) \qquad\text{and}\qquad D_{y} \zeta = -   \psi'(r) \widehat{x-y} + \varepsilon D \phi ( y ).
\end{align}
Using the parabolic version of the Theorem of sums in viscosity solutions' theory (see \cite[Thm 8.3]{CIL} and \cite[Corollary 2]{barles2008second}) yields
%
\begin{align} 
    \label{eqn:viscosity_inequalities_1}
    &  \frac \vep{(T-t)^2} - \mathcal{L} ( x, u(x), D_{x} \zeta ) + \mathcal{L} ( y, u(y), -D_{y} \zeta ) \\
\notag &  +   b ( x )\cdott  D_{x} \zeta  -
 b ( y )\cdott ( - D_{y} \zeta )  
+ H ( x, D_{x} \zeta ) - H ( y, -D_{y} \zeta ) \\
\notag &\leq f ( t,x ) - f (t,  y ).
\end{align} 
We estimate the terms in \eqref{eqn:viscosity_inequalities_1}. 
Due to  \ref{B}, \ref{H1} and \eqref{defun:phi_derivative},  we have
\begin{align}
  b ( x )\cdott  D_{x} \zeta  -
 b ( y )\cdott ( - D_{y} \zeta )  
    &\geq     \psi' (r)  (\alpha\, r-\beta)+ 
    \varepsilon (  b ( x ) \cdott D \phi ( x )   + b ( y )\cdott D \phi ( y )    ),
    \label{eqn:dummytag2}\\
    H ( x, D_{x} \zeta ) - H ( y, -D_{y} \zeta ) & \geq
     - C_{H} [ 2+ 2  \psi' (r)+ \varepsilon ( | D \phi ( x ) | + | D \phi ( y ) |   ) ].\nonumber 
\end{align}
Inserting these estimates into \eqref{eqn:viscosity_inequalities_1} leads to
\begin{align} 
    \label{eqn:viscosity_inequalities_2}
   &  \frac \vep{(T-t)^2}  +    \psi' (r)( \alpha \, r -(\beta+ 2C_H) )    - \mathcal{L} ( x,  u, D_{x} \zeta ) + \mathcal{L} ( y, u, -D_{y} \zeta ) \\
    &\quad \leq 2 C_{H}      +  C_f + \varepsilon \Big( C_{H } ( | D \phi ( x ) | + | D \phi ( y ) |   )- b ( x ) \cdott D \phi ( x )   - b ( y )\cdott D \phi ( y )  \Big)  \notag
\end{align}
where $C_f= \sup\limits_{t,x,y}[ f(x)-f(y)]$.

Let us first consider the case that $r > \frac{2(\beta+2C_H)}\alpha$, so that $\alpha \, r-(\beta+ 2C_H)>\frac \alpha2\, r$. 
Using Lemma \ref{Estimates_non_local_operator}, and   $\psi'(r)\geq \theta L r^{\theta-1}$, \rife{eqn:viscosity_inequalities_2} implies
\begin{align*} 
   &  \frac \vep{(T-t)^2}  +  \frac\alpha2\theta\, L \,  r^\theta  \leq 2 C_{H}  +  C_f   - \varepsilon (\mathcal{L} \phi( x)  + b ( x ) \cdott D \phi ( x ) - C_H | D \phi ( x ) | ) 
     \\
     & \qquad \qquad - \varepsilon (  \mathcal{L} \phi( y)  + b ( y )\cdott D \phi ( y ) - C_H | D \phi ( y ) | ))\,.
\end{align*}
Assume that $r\geq r_1> \frac{2(\beta+2C_H)}\alpha$; this implies $|x|\vee|y| \geq r_1/2$, so one between $|x|$ and $|y|$ can be assumed to be large. This means, due   to Lemma \ref{lemma:supersolution_property}, that we can choose $r_1$ sufficiently large so that
\begin{align*}
& (\mathcal{L}\phi ( x)   + b ( x ) \cdott D \phi ( x ) - C_H | D \phi ( x ) | ) 
 \\ & \qquad\qquad  + ( \mathcal{L} \phi( y) + b ( y )\cdott D \phi ( y ) - C_H | D \phi ( y ) | )) \geq 0 \qquad \forall\, x,y: \, |x|\vee|y| \geq r_1/2\,.
\end{align*}
Then we deduce
$$
\frac \vep{(T-t)^2}  +  \frac\alpha2\theta\, L \, \left(\frac{4C_H}\alpha\right)^\theta  \leq 2 C_{H}  +C_f \qquad \forall r\geq r_1
$$
which cannot hold if $L$ is sufficiently large, only depending on $ C_f , C_H, \alpha, \theta$. 

We are left with the case that $r<r_1$ and we go back to \rife{eqn:viscosity_inequalities_2}; here we use again Lemma \ref{Estimates_non_local_operator} (with $\vfi=\frac12$) and Lemma \ref{lemma:supersolution_property} (which implies that 
$\cL_{C_H}\phi \geq -K$ for some $K>0$), and we obtain
\beq\label{hol1}
\begin{split}
&  \frac \vep{(T-t)^2}  - 4\lambda   \int_0^1 (1-s)\int_{B_\de}   \psi'' ( \xi_{s,z}) |\widehat{x-y}\cdott z|^2 \frac{dz}{|z|^{d+\sigma}} ds\\
    &\quad \leq 2 C_{H}      +  C_f  + 2\vep K+ (\beta +2C_H )\, \psi'(r)
    \end{split}
    \eeq
where we have denoted $\xi_{s,z}=  r+ 2s(\widehat{x-y}\cdot z)$.  We choose $\de= (r/4) \wedge 1$, and we estimate   (recall that $\psi''<0$) 
\beq\label{psi1}
 \int_0^1 (1-s)\int_{B_\de}   \psi'' (\xi_{s,z}) |\widehat{x-y}\cdot z|^2 \frac{dz}{|z|^{d+\sigma}} ds   \leq  \int_0^1 (1-s)\int_{B_\de^-}   \psi'' (\xi_{s,z}) |\widehat{x-y}\cdot z|^2 \frac{dz}{|z|^{d+\sigma}} ds
\eeq
where $B_\de^-= \{z\in B_\de\,:\, \widehat{x-y}\cdot z\leq 0\}$.  Since $B_\de^- $ is half of the ball, we have 
\beq\label{halfball}
\int_{B_\de^-}  |\widehat{x-y}\cdot z|^2 \frac{dz}{|z|^{d+\sigma}}  \geq c_{d,\sigma} \de^{2-\sigma}
\eeq
for a constant $c_{d,\sigma}$ only depending on $d, \sigma$. This allows us to estimate
\beq\label{psi2}
\begin{split}
\psi'(r) & \leq \frac2{c_{d,\sigma}} \int_0^1 (1-s)\int_{B_\de^-} \frac{\psi'(r)}{\de^{2-\sigma}}  |\widehat{x-y}\cdot z|^2 \frac{dz}{|z|^{d+\sigma}}ds   
\\
& \leq C_{d,\sigma} \int_0^1 (1-s)\int_{B_\de^-} \frac{\psi'(\xi_{s,z}) }{(\xi_{s,z} \wedge 1)^{2-\sigma}}  |\widehat{x-y}\cdot z|^2 \frac{dz}{|z|^{d+\sigma}}ds
\end{split}
\eeq
where we used the monotonicity of $\psi'$ and the fact that $r/2\leq \xi_{s,z}\leq r$ for $z$ in $B_\de^-$.
Here and after  $C_{d,\sigma}$ denotes  a possibly different constant only depending on $d, \sigma$. Collecting \rife{psi1}--\rife{psi2} we get
\beq\label{prehol1}
\begin{split}
& 4\lambda   \int_0^1 (1-s)\int_{B_\de}   \psi'' (\xi_{s,z}) |\widehat{x-y}\cdot z|^2 \frac{dz}{|z|^{d+\sigma}} ds+ (\beta+ 2C_H) \, \psi'(r)
\\
& \leq  \int_0^1 (1-s)\int_{B_\de^-} \left\{ 4\lambda \psi''(\xi_{s,z})+ c \frac{\psi'(\xi_{s,z})}{(\xi_{s,z}\wedge 1)^{2-\sigma}}\right\} |\widehat{x-y}\cdot z|^2 \frac{dz}{|z|^{d+\sigma}}ds
\end{split}
\eeq
where  $c$ is a constant depending on $\beta, C_H, d, \sigma$. Now we specify to our choice of $\psi$ and we compute
\begin{align*}  
4\lambda \psi''(\xi)+ c \frac{\psi'(\xi )}{(\xi\wedge 1)^{2-\sigma}}  
& = - C_1 \,  \theta \, C_2\, e^{-C_2\xi^\theta}\xi^{\theta-2} 
\left\{4\lambda(1-\theta)+ 4\lambda \theta C_2 \xi^\theta -  c (\xi^{\sigma-1}\vee \xi) \right\}  
\\
& \qquad -  \theta  L \xi^{\theta-2} \left\{ 4\lambda(1-\theta) -  c\, (\xi^{\sigma-1}\vee \xi)\right\}\,.
\end{align*}
Here we first fix $C_2$ sufficiently large so that  $4\lambda \theta C_2 \xi^\theta -  c (\xi^{\sigma-1}\vee \xi) \geq 0$ for any $r\in (0,r_1)$. 
Having fixed $C_2$ this way, we get
\begin{align*}
4\lambda \psi''(\xi)+ c \frac{\psi'(\xi )}{(\xi\wedge 1)^{2-\sigma}}  & \leq  - 4\lambda(1-\theta) C_1 \,  \theta \, C_2\, e^{-C_2\xi^\theta}\xi^{\theta-2}  + \theta  L \xi^{\theta-2} c\, (\xi^{\sigma-1}\vee \xi)
\\
& \leq   \theta    \xi^{\theta-2}  ( - 4\lambda(1-\theta) C_1  \, C_2\, e^{-C_2r_1^\theta}+ c\, L r_1) \qquad \forall \xi \leq r_1\,. 
\end{align*}
%
Finally, we choose $C_1$ large enough (depending on $L, C_2, r_1$ and the other parameters) so that last term is positive and we get, for some $\vep _0>0$,
$$
4\lambda \psi''(\xi)+ c \frac{\psi'(\xi )}{(\xi\wedge 1)^{2-\sigma}}   \leq -\vep_0\, C_1 \,  \xi^{\theta-2} \qquad \forall \xi \leq r_1\,. 
$$
We insert now this estimate in \rife{prehol1} (where, we recall, $r/2\leq \xi_{s,z}\leq r$) and we deduce   
\begin{align*}
&  4\lambda   \int_0^1 (1-s)\int_{B_\de}   \psi'' ( \xi_{s,z}) |z|^2 \frac{dz}{|z|^{d+\sigma}} ds+ (\beta + 2C_H) \, \psi'(r)
\\
& \leq  -  \vep_0\, C_1 \, (r/2)^{\theta-2} \int_0^1 (1-s)\int_{B_\de^-}  |\widehat{x-y}\cdot z|^2 \frac{dz}{|z|^{d+\sigma}}ds  
\\
& \leq - \tilde \vep_0\, C_1\, r^{\theta-\sigma} 
\end{align*}
for some $\tilde \vep_0$ possibly depending on all parameters but not on  $C_1$.  Plugging this estimate in \rife{hol1} we finally get
$$
\frac \vep{(T-t)^2}   +   \tilde \vep_0\, C_1\, r^{\theta-\sigma} \leq 2 C_{H}      +  C_f  + 2\vep K\,,\qquad \forall r\leq r_1
$$
which is impossible for a possibly  larger choice of $C_1$.  The contradiction  proves that 
\begin{align*}
    u ( t, x ) - u (t, y ) & \leq  \psi ( |x-y| ) + \varepsilon ( \phi ( x ) + \phi ( y ) )+\frac \vep{T-t}
    \\
    & \leq (C_1C_2 + L)|x-y|^\theta+ \varepsilon ( \phi ( x ) + \phi ( y ) )+\frac \vep{T-t}
\end{align*}
which after sending $\varepsilon\to0$ gives  the desired estimate, and the conclusion for $\theta\in (0,1)$.

\vskip1em

{\it Step 2.} \quad  In order to get the Lipschitz estimate ($\theta=1$ in the statement of the Proposition), we   start with the following remark. We notice  that, if $u_0$ satisfies
 $$
 | u_0 ( x ) - u_0 ( y ) | \leq K \left(| x-y |^{\theta}\vee|x-y|\right) , \quad x,y \in \R^d , 
 $$
 then $u(t,x)$ satisfies a similar estimate, uniformly in time. This is obtained as in Step 1 by modifying $\psi$ into $\psi(r)=  C_1(1 - e^{- C_{2} r^{\theta}} )+ L r$.
 As before, the term with $L$ is used for $r\geq r_1$ and is harmless for $r<r_1$. This remark says that, if $u_0$ is Lipschitz, we already know that
\beq\label{inter}
 | u ( t,x ) - u  ( t,y ) | \leq K  \, (1+ |x-y|) \qquad \forall t,x,y\,.  
\eeq
Hence, we only need to prove an estimate for small $|x-y|$.

To this purpose,  we  choose in \rife{defun:Phi} $\psi$ as
\begin{align}
\psi ( r ) = C_1(r-\rho r^{1+\theta}) \,,\quad r\in [0, r_0], \,\,\hbox{with $r_0^\theta= \frac1{2\rho}$.}
    \label{defun:psi_2}
\end{align}
for some $\theta\in(0,1)$ to be fixed later, and we claim that   
\beq\label{phirr0}
\sup_{0<r<r_0}  \Phi(r) \leq 0\,.
\eeq
We observe that, by choice of $r_0$,  we have
$$
\psi(r) \geq C_1 r (1-\rho r_0^\theta)\geq \frac 12 C_1 r \quad  \text{and}\quad\psi'(r)\geq C_1(1-(1+\theta)\rho r_0^\theta) 
>0\quad   \forall r\leq r_0\,.
$$
In particular, due to \rife{inter} and the fact that $u_0$ is Lipschitz, if we choose $C_1$ conveniently large  we have $\Phi\leq 0$ for $t=0$ and for $r=r_0$. This means that, if  \rife{phirr0} does not hold, then $\Phi$ has a positive local maximum, attained at some point $(t,x,y)$ with $t\in (0,T)$ and $x\neq y$, $|x-y|<r_0$.

Then we proceed as in Step 1;  using the viscosity inequalities we obtain \rife{hol1} and  \rife{prehol1}. Namely, with the same notation as before, we get 
\beq\label{lip1}
\begin{split}
&  \frac \vep{(T-t)^2}  \leq 2 C_{H}      +  C_f  + 2\vep K
\\
& \quad + \int_0^1 (1-s)\int_{B_\de^-} \left\{ 4\lambda \psi''(\xi_{s,z})+ c\, \frac{\psi'(\xi_{s,z})}{(\xi_{s,z}\wedge 1)^{2-\sigma}}\right\} | \widehat{x-y}\cdot z|^2 \frac{dz}{|z|^{d+\sigma}}ds
\end{split}
\eeq
with $r=|x-y|$ and $\de= (r/4) \wedge 1$, where $\xi_{s,z}=  r+ 2s(\widehat{x-y}\cdot z)$ satisfies  $r/2\leq \xi_{s,z}\leq r$.  When we specialize to our choice of $\psi$, we have:
\begin{align*}
\qquad & 4\lambda \psi''(\xi )+ c\,  \frac{\psi'(\xi )}{(\xi \wedge 1)^{2-\sigma}}  =    \xi^{\theta-1}C_1 \left[ -  4\lambda \theta\rho (1+\theta) + c\,  (  \xi^{\sigma-1-\theta}\vee1)(1-(1+\theta)\rho \xi^\theta)  \right] \,.
\end{align*}
Hence, for $\theta< \sigma-1$,  choosing $\rho$ sufficiently large we deduce
$$
4\lambda \psi''(\xi )+ c\,  \frac{\psi'(\xi )}{(\xi \wedge 1)^{2-\sigma}}\leq  - \vep_0\, C_1\,  \xi^{\theta-1}
\qquad \forall \xi\leq r_0
$$
for some constant $\vep_0>0$ (independent of $C_1$). Therefore we deduce from \rife{lip1}
\begin{align*}
\frac \vep{(T-t)^2} & \leq 2 C_{H}      +  C_f  + 2\vep K - \vep_0\, C_1\, \int_0^1 (1-s)\int_{B_\de^-} \xi_{s,z}^{\theta-1}| \widehat{x-y}\cdott z|^2 \frac{dz}{|z|^{d+\sigma}}ds
\\ &  \leq 2 C_{H}      +  C_f  + 2\vep K -  \tilde \vep_0 \, C_1 \, r^{\theta+1-\sigma}   \qquad   \forall r\leq r_0\,,
\end{align*}
for a possibly different constant $ \tilde \vep_0$, where we used that $r/2\leq \xi_{s,z}\leq r$ and \rife{halfball}. 
Since $\theta< \sigma-1$,  the above inequality implies 
$$
\frac \vep{(T-t)^2}\leq 2 C_{H}      +  C_f  + 2\vep K -  \tilde \vep_0 \, C_1 \, r_0^{\theta+1-\sigma} 
$$
and  we get a contradiction choosing $C_1$ sufficiently large. Hence we deduce that \rife{phirr0} must hold true. Letting $\vep \to 0$ in the definition of $\Phi$, we proved that
\begin{align*}
    u ( t, x ) - u (t, y ) \leq  C_1\, |x-y|   \qquad \forall x,y,t\,: |x-y|<r_0
\end{align*}
which, jointly with \rife{inter},  concludes with the desired Lipschitz estimate.
\end{proof}


In a similar way,  we obtain an oscillation bound for the truncated problem \eqref{eqn:parabolic_HJB_cutoff}, which will be used in  Section \ref{sec:evol}. 

\begin{thm} \label{thm:parabolic_Lipschitz_cutoff}
    Assume \ref{nu'}, \ref{B}--\ref{B2} \ref{H1}--\ref{H3}, and  $f \in C_b (Q_T)$, 
    and  $u_{0} \in W^{1,\infty} ( \R^d ) $. 
    For $R>0$, let $b_{R} ( x ) := b ( x ) \chi_{R} ( x )$, where $\chi_R(x)$ is defined in \eqref{chir}, and $u_R \in  C_b ( Q_{T} )$ be the unique bounded  viscosity solution of  \eqref{eqn:parabolic_HJB_cutoff}.
Then, given $\beta \in (1, \sigma)$,  there  exists a constant $C>0$, independent of $T$ and $R$, and a constant $c_T$ (independent of $R$ but possibly depending on $T$) such that
\begin{align*}
 |  u_R ( t,x ) - u_R  ( t,y ) |   \leq C | x-y | + \frac{c_T}{\langle R \rangle^{\beta}} ( \langle x \rangle^{\beta} + \langle y \rangle^{\beta} ), \qquad x ,y \in B_{R}, t \in  [ 0,T ).
    \end{align*}
\end{thm}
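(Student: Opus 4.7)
The strategy is to adapt the doubling-variables argument of Proposition~\ref{lipest} by adding a weighted penalty that compensates for the loss of confinement of $b_R$ outside $B_{2R}$. By Proposition~\ref{thm:HJB_well_posedness}(a), $\|u_R\|_\infty\le M_T:=\|u_0\|_\infty+C_0 T$, which will play a key role. My plan is to study
\[
\Phi(t,x,y):=u_R(t,x)-u_R(t,y)-\psi(|x-y|)-\gamma_R\bigl(\langle x\rangle^\beta+\langle y\rangle^\beta\bigr)-\tfrac{\eta}{T-t},
\]
where initially $\psi(r)=C_1 r$, $\gamma_R:=c_T/\langle R\rangle^\beta$ with $c_T:=\kappa M_T$ for a universal $\kappa$, $C_1$ a large Lipschitz constant depending on $\alpha,\beta,C_H,[u_0]_1,\sup_t[f(t)]_0$ (but independent of $R,T$), and $\eta>0$ auxiliary. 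Showing $\Phi\le 0$ and letting $\eta\to 0$ will yield the stated bound with $C=C_1$ and $c_T=\kappa M_T$.

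Assuming for contradiction a positive maximum at $(t^*,x^*,y^*)$, one has $x^*\neq y^*$ and $t^*\in(0,T)$ as in Proposition~\ref{lipest}. A first \emph{confinement-by-penalty} observation is essential: choosing $\kappa$ large enough, whenever $|x^*|\vee|y^*|\ge 2\langle R\rangle$ the penalty $\gamma_R\langle x^*\rangle^\beta$ already exceeds $2M_T\ge u_R(t^*,x^*)-u_R(t^*,y^*)$, making $\Phi<0$. Hence the maximum lies in $\{|x|\vee|y|\le 2\langle R\rangle\}$, where $|b|\le C(1+R)$. At this maximum, the parabolic nonlocal theorem of sums yields the standard viscosity inequality with $\zeta:=\psi(|x-y|)+\gamma_R(\langle x\rangle^\beta+\langle y\rangle^\beta)$. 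Three estimates then come into play: (i) Lemma~\ref{Estimates_non_local_operator} applied with multiplicative weight $\vfi\equiv 1/2$ and additive $F(x)=\gamma_R\langle x\rangle^\beta$ reduces the nonlocal difference to $\gamma_R\bigl(\mathcal L\langle x^*\rangle^\beta+\mathcal L\langle y^*\rangle^\beta\bigr)$ plus the concavity term, and the first is bounded by $c\gamma_R(1+\langle x^*\rangle^{\beta-1}+\langle y^*\rangle^{\beta-1})$ as in the proof of Lemma~\ref{lemma:supersolution_property}; (ii) writing $b_R(x)-b_R(y)=\chi_R(x)(b(x)-b(y))+b(y)(\chi_R(x)-\chi_R(y))$, together with (\textbf{B1}), $\|D\chi_R\|_\infty\le 2/R$, and $|b(y^*)|\le C(1+R)$, gives
\[
(b_R(x^*)-b_R(y^*))\cdot\widehat{x^*-y^*}\ge\chi_R(x^*)\bigl(\alpha|x^*-y^*|-\beta\bigr)-C'|x^*-y^*|
\]
with $C'$ independent of $R$; (iii) the Hamiltonian growth bound from (\textbf{H1}) produces only $O(1+C_1)$ contributions plus a $\gamma_R\beta(\langle x^*\rangle^{\beta-1}+\langle y^*\rangle^{\beta-1})$ correction, which combines with the Lyapunov piece $\gamma_R\beta\bigl(b_R(x^*)\cdot\langle x^*\rangle^{\beta-2}x^*+b_R(y^*)\cdot\langle y^*\rangle^{\beta-2}y^*\bigr)$ (nonnegative on $\operatorname{supp}\chi_R$ and zero otherwise) into a bounded net contribution.

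Assembling these estimates, two regimes arise as in Proposition~\ref{lipest}. For $|x^*-y^*|\ge r_1$ with $r_1=r_1(\alpha,\beta,C_H)$, the dominant positive term is $C_1\chi_R(x^*)\alpha|x^*-y^*|$, contradicting the viscosity inequality once $C_1$ is chosen large (independently of $R,T$). For $|x^*-y^*|<r_1$, refine $\psi$ to $\psi(r)=C_1(r-\rho r^{1+\theta})$ with $\theta<\sigma-1$ and $\rho$ small as in Step~2 of Proposition~\ref{lipest}; the concavity term in Lemma~\ref{Estimates_non_local_operator} then provides a negative nonlocal contribution of order $-\widetilde c\,C_1|x^*-y^*|^{\theta+1-\sigma}$ dominating all remaining $O(1)$ quantities, again a contradiction. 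Hence $\Phi\le 0$. The main technical obstacle is the drift decomposition in the transition region $R<|x^*|<2R$: the piece $b(y)(\chi_R(x)-\chi_R(y))$ cannot be made small in $R$ because $|b(y)|\sim R$ there, producing the correction $C'|x^*-y^*|$ that forces $C_1$ to be taken large; simultaneously $c_T$ must be of the order of the $L^\infty$-bound $M_T$, which is the origin of the $T$-dependence of $c_T$ in the statement.
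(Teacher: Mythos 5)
Your starting point (penalize by $\gamma_R(\langle x\rangle^\beta+\langle y\rangle^\beta)$ with $\gamma_R\sim M_T/\langle R\rangle^\beta$ and localize the maximum) is the right idea and is exactly what the paper does; but you throw away most of what the penalty gives you, and your attempt to handle the resulting transition shell contains two genuine problems.

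The key missed observation is this: if you take $\kappa\ge 2$ (equivalently $\gamma_R=\frac{2}{\langle R\rangle^\beta}M_T$ as in the paper), then already for $|x|\ge R$ one has $\gamma_R\langle x\rangle^\beta\ge\gamma_R\langle R\rangle^\beta=2M_T\ge 2\|u_R\|_\infty\ge u_R(t,x)-u_R(t,y)$, so $\Phi\le 0$ whenever $|x|\vee|y|\ge R$. The maximum therefore sits in $B_R\times B_R$, and on $B_R$ one has $\chi_R\equiv 1$, so $b_R(x^*)=b(x^*)$, $b_R(y^*)=b(y^*)$. At that point the viscosity inequality is \emph{literally} the one from the untruncated problem, and the entire argument of Proposition~\ref{lipest} applies unchanged. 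There is no transition shell to analyze and no drift decomposition is required. You instead only forced $|x^*|\vee|y^*|\le 2\langle R\rangle$, which gains nothing extra (the admissible $\kappa$ you compute is $\kappa>2^{1-\beta}$, which is \emph{weaker} than $\kappa=2$, not stronger) and throws you back into the shell $R<|x^*|<2R$ where $\chi_R$ transitions.

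Once you are in that shell, your argument breaks in two places. First, in the large-$r$ regime you claim the ``dominant positive term is $C_1\chi_R(x^*)\alpha|x^*-y^*|$'', but $\chi_R(x^*)$ can be arbitrarily close to $0$ there (and equals $0$ on $|x^*|\ge 2R$), so this term does not dominate; the negative correction $-C'|x^*-y^*|$ coming from $b(y)(\chi_R(x)-\chi_R(y))$ is then uncompensated, and no contradiction is obtained. Second, the bound $|b(y)|\le C(1+R)$ on $B_{2\langle R\rangle}$ is not a consequence of \ref{B}--\ref{B2}: those conditions give a one-sided monotonicity bound and local Lipschitz regularity, not linear growth (e.g.\ $b(x)=\alpha x+ x|x|$ satisfies both hypotheses but grows quadratically). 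Without that growth bound your constant $C'$ need not be independent of $R$. Both issues disappear once you strengthen the localization to $B_R\times B_R$ as above.
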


\begin{proof}   
From Proposition \ref{thm:HJB_well_posedness} (a) we know that  $ |u_R (t,x)| \leq \|u_0\|_{\infty} + C_0 T$.  As in the previous theorem, we consider the function
 \begin{align*}
 \Phi_R (t,x,y) := u_R (t,x) - u_R (t,y) - C \psi (x-y) - \epsilon_R ( \phi (x) + \phi (y)) - \frac\vep{(T-t)^2},
\end{align*}
    where $\phi(x)=\langle x\rangle^\beta$ with $\beta<\sigma$, and where 
    $$
    \epsilon_R := \frac{2 }{\langle R \rangle^{\beta }}(\|u_0\|_{\infty} + C_0 T).
    $$ 
This choice of $ \epsilon_R $ implies that $\Phi_R\leq 0$ for  $(x,y) \not \in [B(0,R)\times B(0,R)]^c$.  Hence, if  $\Phi_R$ has a positive maximum, this is attained at points 
$(t,x,y)$ with $x,y \in B(0,R)$; since we have $b_R=b$ on $B(0,R)$, we can follow the same proof of  Proposition \ref{lipest}. We obtain that $\Phi_R\leq 0$, for the choice of $\psi$ used before.  Letting $\vep \to 0$ we conclude with the desired estimate. 
\end{proof}

We now give a local Lipschitz estimate for solutions of \rife{eqn:parabolic_HJB}. This is now independent of the behavior at infinity, so we shorten notations, including both the drift term and the right-hand side in the Hamiltonian. 

\begin{thm}\label{thm:parabolic_local_Lipschitz_cutoff}
Assume \ref{nu'}, and let $\hat H(t,x,p)$ be a $C^1$ function  satisfying
\beq\label{hath}
|\hat H(t,x,p)| \leq \hat C(1+ |p|)\,, \qquad \forall (t,x,p)\in (0,T)\times \Omega \times \R^d\,.
\eeq
Let  $u$ be  a viscosity solution of the problem
$$
    \begin{cases}
        \partial_t u - \mathcal{L} u (x) + \hat H ( t, x , Du ) =0 , &\text{in } ( 0,T ) \times \Omega,  \\
        u ( 0, \cdot ) = u_{0}, & \text{in } \Omega,  
    \end{cases}
$$
where $\Omega\subseteq \R^d$ is an  open domain in $\R^d$.
Let   $B $ be a compact convex domain contained in $\Omega$.  
    Then there  exist constants $ C,\delta>0$,  such that
\beq\label{lipB}
        | u  ( t,x ) - u  ( t,y ) | \leq C (1+\tilde\omega_{\de, B}(u))  | x-y | \qquad \hbox{for $x,y \in B$},
    \eeq
    where $C$ depends on $\frac1t,d,\sigma$, and $\hat C$; and 
$$
\tilde\omega_{\de, B}(u):= \sup_{\scriptsize\begin{array}{c}s\in(\frac t2 , t),x,y \in  B_{2},\\|x-y|<\delta \end{array}}\!\!\!\!  |u  (s,x) - u  (s,y)|\,,
$$ 
where $B_2=\{x: \textup{dist}(x,B)\leq 2\}$ and $\de\leq 1$ only depends on $d,\sigma, \hat C$. 
\vskip0.4em
In addition, if $u_0$ is locally Lipschitz continuous, the estimate holds up to  $t=0$ and we have  
\beq\label{lipB2}
       \sup_{t\in(0,T)} | u  ( t,x ) - u  ( t,y ) | \leq C (1+\omega_{\de, B,T}(u)) | x-y |, \qquad \hbox{for $x,y \in B$},
    \eeq
where the constant $C$ in \rife{lipB2}  only depends on $d,\sigma, \hat C, Lip_{B_1}(u_0)$; and  
$$
\omega_{\de, B,T}(u):= \sup_{\scriptsize\begin{array}{c}s\in[0,T],x,y \in  B_{2},\\|x-y|<\delta \end{array}}\!\!\!\!  |u  (s,x) - u  (s,y)|\,.
$$\nc
\end{thm}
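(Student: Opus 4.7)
The plan is to adapt the doubling-variables argument of Proposition \ref{lipest} to the local setting by adding a smooth cut-off penalization that forces the maximum to stay inside $B_2$. For part (a), consider, for fixed $t\in(0,T)$,
\begin{equation*}
\Phi(s,x,y) := u(s,x) - u(s,y) - C\psi(|x-y|) - A\bigl(\chi(x)+\chi(y)\bigr) - \tfrac{\vep}{s-t/2},
\end{equation*}
on $(t/2,t]\times\R^d\times\R^d$, where $\psi(r) = r-\rho r^{1+\theta}$ is the concave profile of \eqref{defun:psi_2} with $\theta\in(0,\sigma-1)$ (so that $\psi'(r)\ge\tfrac12$ for $r\le r_0:=(2\rho)^{-1/\theta}$), and $\chi\in C_b^2(\R^d)$ is a nonnegative smooth cut-off with $\chi\equiv 0$ on $B_1$ and $\chi\ge 1$ on $\R^d\setminus B_2$. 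We choose $A$ large (in terms of $\|u\|_\infty$ on $(t/2,t)\times B_2$) so any positive maximum of $\Phi$ must occur at $\hat x,\hat y\in B_2$; then choose $\delta\in(0,r_0]$ and set $C \ge c_0(1+\tilde\omega_{\delta,B}(u))$ with $c_0$ large enough that $C\psi(\delta)>2\tilde\omega_{\delta,B}(u)$, so that any positive maximum with $\hat x,\hat y\in B_2$ necessarily has $|\hat x-\hat y|<\delta$. The penalty $\vep/(s-t/2)$ prevents the maximum from occurring at $s=t/2$.

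Assume toward contradiction that $\sup\Phi>0$; it is then attained at a point $(\hat s,\hat x,\hat y)$ with $\hat x\ne\hat y$, $\hat x,\hat y\in B_2$, $|\hat x-\hat y|<\delta$, $\hat s>t/2$. Applying the parabolic nonlocal Theorem of Sums (as in \cite{barles2008second,JaKa05}) gives sub- and super-solution inequalities at $(\hat s,\hat x)$ and $(\hat s,\hat y)$ respectively. Subtracting them and invoking Lemma \ref{Estimates_non_local_operator} with $\varphi\equiv\tfrac12$ and $F=A\chi$ gives
\begin{equation*}
\tfrac{\vep}{(\hat s-t/2)^2}+4\lambda\!\int_0^1\!(1-s)\!\int_{B_\de^-}\!\psi''(\xi_{s,z})|\widehat{x{-}y}\cdott z|^2\tfrac{dz}{|z|^{d+\sigma}}ds
\le \hat H(\hat y,-D_y\zeta)-\hat H(\hat x,D_x\zeta)+A\bigl(|\cL\chi(\hat x)|+|\cL\chi(\hat y)|\bigr).
\end{equation*}
The growth condition \eqref{hath} bounds the Hamiltonian difference by $\hat C(2+2\psi'(r)+A(|D\chi(\hat x)|+|D\chi(\hat y)|))$, while the computation after \eqref{lip1} shows that the $\psi''$--integral is bounded above by $-\tilde\vep_0\,C\,r^{\theta+1-\sigma}\le -\tilde\vep_0 \,C\,\delta^{\theta+1-\sigma}$ for $\vep_0$ depending only on $d,\sigma,\rho$. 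Since $\|\cL\chi\|_\infty$ and $\|D\chi\|_\infty$ are fixed constants, choosing $\rho$ first (small enough to have $\theta<\sigma-1$), then $C$ large (depending only on $d,\sigma,\hat C,A$, and hence on $1/t$ through $A$) yields a contradiction. Sending $\vep\to 0$ we deduce $\Phi\le 0$, whence $u(s,x)-u(s,y)\le C|x-y|$ for $x,y\in B$, $|x-y|<\delta$, $s\in(t/2,t]$; for $|x-y|\ge\delta$ the bound is immediate from the oscillation since then $|u(s,x)-u(s,y)|\le\tilde\omega_{\delta,B}(u)\le (\tilde\omega_{\delta,B}(u)/\delta)|x-y|$. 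Combining these two regimes gives \eqref{lipB}.

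For part (b), when $u_0$ is locally Lipschitz on $B_1$, we drop the $\vep/(s-t/2)$ term and work on $[0,T]\times\R^d\times\R^d$. At $s=0$ one has $u_0(x)-u_0(y)-C\psi(|x-y|)-A(\chi(x)+\chi(y)) \le (\mathrm{Lip}_{B_1}(u_0)-\tfrac C2)|x-y|\le 0$ as soon as $C\ge 2\,\mathrm{Lip}_{B_1}(u_0)$ and $x,y\in B_1$ (elsewhere $A\chi$ dominates). The same max-point analysis then yields a $T$-uniform bound with $C$ depending only on $d,\sigma,\hat C,\mathrm{Lip}_{B_1}(u_0)$ and linearly on $\omega_{\delta,B,T}(u)$, proving \eqref{lipB2}. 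The main technical obstacle is the bookkeeping around the cut-off $\chi$: one must check that the extra $\cL F$ and $|DF|$ terms produced by Lemma \ref{Estimates_non_local_operator} depend only on $A$ and $\|\chi\|_{C^2_b}$ (and not on $C$), so that they can be absorbed by the coercive $-C\,r^{\theta+1-\sigma}$ contribution from $\psi''$ upon enlarging $C$; and that $\delta$ can be chosen independently of $B$, depending only on $d,\sigma,\hat C$ via $\rho$.
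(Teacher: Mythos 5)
The overall strategy — doubling-variables with the concave profile $\psi$ and Lemma~\ref{Estimates_non_local_operator} — matches the paper, but you replace the paper's localization to a small ball $B(x_0,\delta)\times B(x_0,2\delta)$ (followed by chaining) with a single bounded cut-off $\chi$ supported near $B_2$. There are two genuine gaps. The first is the time penalization: $\partial_s\bigl(\vep/(s-t/2)\bigr)=-\vep/(s-t/2)^2$ enters the subtracted viscosity inequalities with the \emph{unfavorable} sign, and after rearranging it sits on the right-hand side as $+\vep/(\hat s-t/2)^2$. Positivity of $\Phi$ only gives $\hat s-t/2\gtrsim\vep/\tilde\omega_{\de,B}(u)$, so this stray term can be of size $\tilde\omega_{\de,B}(u)^2/\vep$ and blows up as $\vep\to0$; no $\vep$-uniform contradiction is reached and you cannot let $\vep\to0$ at the end. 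Compare with the paper's \eqref{geco}: there the $\vep$-penalty $\vep/(t_0-t)$ sits at the \emph{upper} time boundary, giving the favorable $+\vep/(t_0-t)^2$, while the lower boundary is handled by the finite quadratic $C_0(s-t_0)^2$ with $C_0\sim\omega/t_0^2$, whose contribution $2C_0(\hat s-t_0)\gtrsim-\omega/t_0$ is exactly where the $1/t$-dependence of the constant originates. That dependence is necessary (the Lipschitz constant must blow up as $t\to0$ for merely H\"older $u_0$ even though $\tilde\omega_{\de,B}(u)$ stays bounded), so a scheme that makes it disappear has to be wrong somewhere.

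The second gap is in the localization. To force $\hat x,\hat y\in B_2$ the coefficient $A$ must dominate $u(\hat s,\hat x)-u(\hat s,\hat y)$ whenever $\chi(\hat x)+\chi(\hat y)\geq1$, and — depending on whether you supremize over $\R^d\times\R^d$ or over a bounded set — this requires controlling $\|u\|_{L^\infty(\R^d)}$ or the full oscillation of $u$ over $B_2$, neither of which is the $\delta$-oscillation $\tilde\omega_{\delta,B}(u)$ allowed by the statement. The same issue undermines the claim that $C\psi(\delta)>2\tilde\omega_{\delta,B}(u)$ forces $|\hat x-\hat y|<\delta$: for $|\hat x-\hat y|\ge\delta$, the difference $u(\hat x)-u(\hat y)$ is not bounded by $2\tilde\omega_{\delta,B}(u)$ without a chaining argument. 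The paper sidesteps both issues simultaneously by working in the $\delta$-sized box $B(x_0,\delta)\times B(x_0,2\delta)$ with $|x-y|<\delta$, so that the $\delta$-oscillation bound applies directly on every piece of the boundary (with $L=\omega/(c_\phi\delta^2)$ and $C_0,K$ scaled by the same $\omega$), and then recovering the estimate on all of $B$ via the chaining in Step 2, a step your scheme omits but cannot do without.
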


\begin{proof} The proof is inspired by \cite[Theorem 4.19]{porretta2013global}, and many details are similar to  Step 2 of Proposition \ref{lipest}; however,  in this proof we shall not exploit the  confining effect of the drift, but only the local regularizing effect of the diffusion.
 
\smallskip

 {\it Step 1:}\quad Let $\delta > 0$, 
 $x_{0}\in B$, $t_{0}\in(0,T)$, and 
 define 
     \begin{align*}
         \Delta = \big\{ ( t,x,y ) \in ( 0,T ) \times B( x_{0},\delta ) \times B( x_{0}, 2 \delta):
         | x-y | < \delta, \ \frac{t_{0}}{2} < t <   t_{0}   \wedge T \big\},
    \end{align*}
    and the function
    \begin{align*}
        \Phi_{\epsilon} ( t,x,y ) = &\ u  ( t,x ) - u  ( t,y ) - K \psi ( |x-y| ) - L \phi ( x-x_{0} )
        - 
        C_{0} ( t-t_{0} )^{2} - \frac{\varepsilon}{t_0-t},
    \end{align*}
    where $K$, $L$, and $C_{0}$ are constants to be determined, and 
    $\phi (x) = \langle x \rangle^{\beta}-1$ with $1<\beta < \sigma$.  Note that $B(x_0,2\delta)\subset B^{2\delta}$ and that $\phi\geq0$ is  smooth and radially increasing with a unique strict minimum $\phi(0)=0$, so there is $c_\phi>0$ such that $\phi(x)\geq c_\phi|x|^2$ for $|x|<1$. 
    As before, we take $\theta \in ( 0, 1)$ sufficiently small and $\rho$ sufficiently large (to be fixed below in dependence of $\theta$)  and we consider 
    \begin{align*}
        \psi ( r) =r - \rho r^{1+ \theta}, 
    \end{align*}
    which is increasing and concave for $r \leq \big( \frac{1}{\rho (1+ \theta)} \big)^{\frac{1}{\theta}}=:\delta_0$ $(<1)$. Assume $2\delta<\delta_0$.   

We claim that $\Phi_{\epsilon} (t,x,y) \leq 0$ for all  $( t,x,y ) \in \Delta$, and 
     arguing by contradiction we suppose that 
     $\sup_{\Delta} \Phi_{\epsilon} ( t,x,y ) >0$. In this case, since $\Phi_\epsilon \to -\infty$ uniformly as $t\to t_0^-$,  we have that $\sup_{\Delta} \Phi_{\epsilon} $ must be attained for some $t<t_0$. 
    Since
    \begin{align*}
        \Phi_{\epsilon} ( t,x,y ) \leq  \omega_{\delta,B,t_0} (u)  - K \psi ( x-y ) - L \phi ( x-x_{0} ) - 
        C_{0} ( t-t_{0} )^{2} - \frac{\epsilon}{t_0-t},
    \end{align*}
    by choosing
    \begin{align*}
     L = \frac{  \omega_{\delta, B,t_0} (u  )}{c_\phi \delta^2} , \qquad C_{0} = \frac{4 \omega_{\delta, B,t_0} (u  ) }{t_{0}^{2}}, 
    \qquad K \geq \frac{  \omega_{\delta, B,t_0} (u  ) }{\psi ( \delta )},
    \end{align*}
   the maximum of $\Phi_{\epsilon}$ in $\overline{\Delta}$ cannot be attained
    when $t = \frac{t_{0}}{2}$, $| x-y | = \delta $, or  $| x-x_{0} | = \delta$  since  $\Phi_\epsilon\leq 0$ in those cases. Moreover, if $| y-x_{0} | = 2\delta $, then $| x-x_{0} | \geq |y-x_0|-|x-y|\geq \delta $
    and hence the maximum can  not either be attained when $| y-x_{0} | = 2\delta$.
    Thus, the maximum is attained in the interior of $\Delta$. Furthermore, we can exclude  that 
    $x = y$, otherwise $\Phi_{\epsilon} \leq 0$.  
    
    Let $ ( t , x , y  )  $ denotes such a maximum  point of $\Phi_{\epsilon}$. Then
    setting $\zeta (t,x,y) = K \psi ( x-y ) + L \phi ( x-x_{0} ) + 
        C_{0} ( t-t_{0} )^{2} + \frac{\epsilon}{t_0-t}$ and
     subtracting sub- and supersolution inequalities (cf. \cite[Theorem 2.2]{JaKa05}), we have
    \beq\label{geco}
    \begin{split}
        &\frac{\epsilon}{ ( t_0-t  )^{2}} + 2 C_{0} ( t -t_{0} ) 
        - \mathcal{L} ( x, u, D_{x} \zeta ) + \mathcal{L} ( y, u, -D_{y} \zeta ) \\
\notag &  + \hat H ( x, D_{x} \zeta ) - \hat H ( y, -D_{y} \zeta ) \leq 0.
    \end{split}
    \eeq    
Let us now estimate the different terms.  
By \rife{hath}, we have
 \begin{align*}
     \hat H ( x, D_{x} \zeta ) - \hat H ( y, -D_{y} \zeta ) &  \geq
     - \hat C  \big(2+ 2K \psi' + L  | D \phi ( x-x_0 ) | \big) \\
     & \geq  - \hat C  \big(2+ 2K   + L c_\beta\delta \big),
\end{align*}    
where we used that $\psi'\leq 1$ and $| D \phi (x) | \leq \beta \langle x\rangle^{\beta-1} |x|$.  
%

We estimate the nonlocal terms  using as before Lemma \ref{Estimates_non_local_operator}; in particular (as in Step 2 of Proposition \ref{lipest}), in our function $\psi$ we can choose $\theta<\sigma-1$ and $\rho$ such that  we get
    \begin{align*}
        \mathcal{L} ( x, u, D_{x} \zeta ) - \mathcal{L} ( y, u , - D_{y} \zeta ) 
        \leq L \, \mathcal{L}\phi ( x-x_0)   - 
        K\Lambda \de^{\theta+1 -\sigma },
    \end{align*}
for some constant $\Lambda > 0$.  
Combining the above two estimates, we deduce from \rife{geco}
\begin{align*}
      &
      K(\Lambda \delta^{- \tilde \theta }  - 2\hat C )    \leq 
C_{0} t_{0} + L \, \mathcal{L} \phi( x-x_0)   + \hat C  \big(2+   L c_\beta \delta \big).
\end{align*}
where $\tilde \theta= \sigma-1-\theta$ can be assumed to be positive. 
Now we choose first 
\begin{align}\label{delta}
\delta=\min\Big(\delta_0/2, 
(\frac{2\hat C +1}{\Lambda})^{-\frac1{\tilde \theta}}\Big),
\end{align}
so that $\Lambda \delta^{- \tilde{\theta} }  - 2 \hat C \geq1,$ and we get
\begin{align*}
K &  \leq 
C_{0} t_{0} + L \, \mathcal{L} \phi( x-x_0)   + \hat C  \big(2+   L  c_\beta \delta \big)
\\
& \leq c\, \omega_{\delta,B,t_0} (u) \left( \frac1{t_0} + \frac1{\de^2}(\mathcal{L} \phi( x-x_0)  +  c_\beta \delta)\right) + 2\hat C 
\end{align*}
where we used the initial choice of $C_0$ and $L$. 
Choosing $K$ sufficiently large (depending on $\hat C$, $\omega_{\delta,B,t_0} (u)$, $\de$ and $\frac1{t_0}$), we get a  contradiction.
Hence $\sup_{\Delta}\Phi_{\epsilon}\leq 0$.  Letting $\vep \to 0$, we have proved that
\begin{align*}
u  ( t,x ) - u  ( t,y )  & \leq  K \psi ( |x-y| ) + L \phi ( x-x_{0} ) + C_{0} ( t-t_{0} )^{2} \\
&  \leq K\frac\theta{1+\theta} |x-y|+ L \phi ( x-x_{0} ) + C_{0} ( t-t_{0} )^{2} 
\end{align*}
We now let $t\to t_0$, we take  $x=x_0$, and we obtain
%
\begin{align*}
    |u (t_0,x_0) - u(t_0,y)| \leq   K   |y-x_0| \qquad\text{in}\qquad B(x_0, 2\delta). 
\end{align*}
Note that $\delta$ is independent of $t_0,x_0, B,u$, and $   K  $ is independent of $x_0\in B$. Moreover, $  K   \to\infty$ as $t_0\to0$ and remains bounded as $t_0\to\infty$. 

However, if $u_0$ is Lipschitz, then one can take $C_0=0$ in the above proof, and let $t$ vary in $(0,T)$. The conclusion will hold up to $t=0$ provided the constant $K$ is larger than the Lipschitz constant of $u_0$.
\smallskip

{\it Step 2:}\quad Since $x_0$,  in Step 1,  was an arbitrary point in $B$, we deduce   \rife{lipB} in a standard way by using the convexity of $B$.

Let $x,y\in B$ and $t_0>0$.
Take $s_0=0 < s_1 < \ldots < s_N = 1$, such that
for $x_i :=(1-s_i)x + s_i y $ ($x_i$ is on the line through $x$ and $y$) and $|x_i - x_{i+1}| \leq \delta/2$. 
Then $x_{i+1}\in B(x_i,\delta)$ for every $i$, and we can use (repeatedly) step 1 with $x_0=x_i$ and $x=x_{i+1}$ to conclude that
\begin{align*}
   |u (t_0,x) - u (t_0,y) | \leq \sum_{i=0}^N |u (t_0,x_i) - u(t_0,x_{i+1}) | \leq\sum_{i=0}^N K |x_i-x_{i+1}|= K |x-y|.
\end{align*}
\ee
\end{proof}

We conclude this section 
by showing  that the equation  \rife{eqn:parabolic_HJB} enjoys local-in-time (but  global-in-space) regularizing effects. 

\begin{proposition}\label{regeff} Assume \ref{nu'},  \ref{H1} and that $b\in C(\R^d)$ satisfies 
\beq\label{bloc}
\exists \,\, \beta>0\,:\,\,\,  (b(t,x)-b(t,y)) \cdott (x-y) \geq  -\beta \, |x-y |   \qquad \forall x,y \in \R^d:\, |x-y|\leq 1\, ,  t>0\,.
\eeq 
Let $u$ be a  (viscosity) solution of \rife{eqn:parabolic_HJB} such that $|u(t,x)| \leq C (1+ |x|)^\gamma$ for some $\gamma<\sigma$, $C>0$.  

Given  $t_0\in (0,1)$ and $\theta\leq 1$, $k<\sigma$, there exists  $K >0$ (depending on  $\beta, |b(0)|, c_H, c_{\sigma,d}, d, \sigma, k$)  such that $u$ satisfies, for every $x,y\in \R^d\,:|x-y|\leq 1$:
\beq\label{to}
 |u(t_0,x)- u(t_0,y)| \leq K \, \frac{([u]_{\langle x\rangle^k}+ t_0 (C_H+  [f]_{\langle x\rangle^k}) )}{ t_0^{ \theta / \sigma} }\,    [\langle x\rangle^k+ \langle y\rangle^k]\,   |x-y|^\theta  
\eeq
where $[u]_{\langle x\rangle^k}:= \sup_{\frac{t_0}2<t<  t_0 }\, \,  [u(t)]_{\langle x\rangle^k} $ and $[f]_{\langle x\rangle^k}:= \sup_{\frac{t_0}2<t<  t_0 }\, \,  [f(t)]_{\langle x\rangle^k}$.
\end{proposition}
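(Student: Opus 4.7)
The plan is to establish this weighted Hölder regularization by the Ishii--Lions doubling-of-variables technique, modified to accommodate the multiplicative weight $W(x,y) := \langle x\rangle^k + \langle y\rangle^k$ appearing on the right-hand side. Abbreviate $M := [u]_{\langle x\rangle^k} + t_0(C_H + [f]_{\langle x\rangle^k})$. For $t\in[t_0/2,t_0]$ define
$$
\Phi_\varepsilon(t,x,y) := u(t,x) - u(t,y) - \Lambda(t)\,\psi(|x-y|)\,W(x,y) - \varepsilon[\phi(x)+\phi(y)] - \frac{\varepsilon}{t_0-t},
$$
where $\psi(r) = r^\theta - \rho r^{1+\theta}$ on $[0,\delta_0]$ (concave, increasing, to be extended appropriately beyond $\delta_0$), $\phi(x) = \langle x\rangle^{\gamma+1}$ with $\gamma<\sigma$ the prescribed growth exponent of $u$, and $\Lambda(t) = A\,(t-t_0/2)^{-\theta/\sigma}$. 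The blow-up of $\Lambda$ at $t_0/2$, together with the bound $|u(t_0/2,x)-u(t_0/2,y)| \leq [u]_{\langle x\rangle^k} W(x,y)$, forces $\Phi_\varepsilon(t_0/2,\cdot,\cdot)\leq 0$; the term $\varepsilon(\phi(x)+\phi(y))$ sends $\Phi_\varepsilon \to -\infty$ at spatial infinity (using $\gamma<\sigma$ and $\phi(x)\gg u(t,x)$); the penalization $\varepsilon/(t_0-t)$ excludes maxima at $t=t_0$.

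Assume by contradiction that $\sup \Phi_\varepsilon > 0$. Then a maximum is attained at some interior point $(\bar t,\bar x,\bar y)$ with $\bar x\neq\bar y$, and by taking $A$ sufficiently large compared to $M/\psi(\delta_0)$ one forces $r := |\bar x - \bar y| < \delta_0$. Subtracting the viscosity inequalities at $(\bar t,\bar x)$ and $(\bar t,\bar y)$ for the test function $\zeta := \Lambda(t)\psi(|x-y|)W(x,y) + \varepsilon(\phi(x)+\phi(y)) + \varepsilon/(t_0-t)$, the crucial step is to bound the nonlocal difference $\mathcal L(\bar x,u,D_x\zeta) - \mathcal L(\bar y,u,-D_y\zeta)$ by applying Lemma \ref{Estimates_non_local_operator} with $\varphi := W/2$ and the concave $\psi$. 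This extracts the dominant negative contribution
$$
4\lambda\int_0^1(1-s)\!\int_{B_\delta}\psi''(\xi_{s,z})\,|\widehat{\bar x{-}\bar y}\cdot z|^2\,W(\bar x,\bar y)\,\frac{dz}{|z|^{d+\sigma}}\,ds \ \lesssim\ -c_0\,\Lambda(\bar t)\,W(\bar x,\bar y)\,r^{\theta-\sigma},
$$
together with controlled cross terms involving $\mathcal L W$ (bounded since $k<\sigma$, by the argument of Lemma \ref{lemma:supersolution_property}) and a term of order $\Lambda\psi'(r)\,r^{2-\sigma}\,(\sup_{B_\delta}|DW|)$, which is $\lesssim \Lambda r^{\theta-\sigma+1}\langle \bar x\rangle^{k-1}$ and hence strictly subdominant for $r\leq \delta_0$ small.

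The remaining terms are estimated by standard means: the drift produces at most $C\,\Lambda\,\psi'(r)\,W \sim \Lambda r^{\theta-1}W$ (using the one-sided bound \eqref{bloc} on the unit ball $|\bar x-\bar y|\leq 1$), the Hamiltonian contributes $C_H(1 + \Lambda\,\psi'(r) W + \Lambda\,\psi(r)|DW|)$, the source gives $|f(\bar t,\bar x)-f(\bar t,\bar y)| \leq [f]_{\langle x\rangle^k}W$, and the time derivative of $\Lambda$ gives $|\Lambda'(\bar t)|\psi(r)W \sim (\bar t - t_0/2)^{-1}\Lambda(\bar t)\,r^\theta W$. Since $\sigma>1$ and $r\leq \delta_0 \leq 1$, one has $r^{\theta-\sigma} \gg r^{\theta-1}$, so the drift and gradient-type Hamiltonian terms are absorbed by the diffusive gain for $\delta_0$ small (fixed depending only on $d,\sigma,c_{\sigma,d},\beta,C_H,k,\theta$). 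The time-derivative term is handled by the explicit choice $\Lambda(t) = A(t-t_0/2)^{-\theta/\sigma}$: comparing $(\bar t-t_0/2)^{-1}r^\theta$ against $r^{\theta-\sigma}$, the inequality $r^\sigma \leq C(\bar t - t_0/2)$ is automatic at the maximum, and the opposite regime forces $\Lambda(\bar t)r^\theta \lesssim A$, which is then absorbed by the leading negative term. The resulting inequality reduces to $A \leq C\,M$, contradicting the choice of $A$ made at the start. Sending $\varepsilon\to 0$ yields the estimate at $t=t_0$ for $|x-y|<\delta_0$; the complementary range $\delta_0\leq |x-y|\leq 1$ follows trivially from $|u(t_0,x)-u(t_0,y)| \leq [u]_{\langle x\rangle^k}W(x,y) \leq \delta_0^{-\theta}[u]_{\langle x\rangle^k}W\,|x-y|^\theta$.

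The main obstacle is the presence of the weight $W(x,y)$ in the test function. This weight generates extra contributions whenever we differentiate $\zeta$ or apply $\mathcal L$ to it: gradient terms $\Lambda\,\psi(r)\,D_xW$ and $\Lambda\,\psi(r)\,D_yW$, a nonlocal term involving $\mathcal L W$, and cross terms in Lemma \ref{Estimates_non_local_operator} coupling $\psi'$ with $\sup_{B_\delta}|DW|$. Keeping these strictly lower order than the diffusive gain $-c_0\Lambda r^{\theta-\sigma}W$ requires exploiting $k<\sigma$ (for the integrability and boundedness of $\mathcal L W$) together with the pointwise inequality $|DW(x,y)| \lesssim \langle x\rangle^{k-1}+\langle y\rangle^{k-1} \lesssim W(x,y)$, and fixing $\delta_0$ sufficiently small in terms of the data only.
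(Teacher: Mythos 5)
Your approach shares the paper's core machinery: doubling of variables with the weighted penalization $W(x,y)\psi(|x-y|)$, $\varepsilon$-penalties at spatial infinity and near the terminal time, and Lemma~\ref{Estimates_non_local_operator} applied with $\varphi(x)=\langle x\rangle^k$ to extract the diffusive gain from a concave $\psi$. The genuinely different feature is the time treatment. The paper penalizes the boundary $t=t_0/2$ with a \emph{bounded} quadratic term $C_0(t-t_0)^2$ (with $C_0\sim[u]_{\langle x\rangle^k}/t_0^2$), recovers the $t_0^{-\theta/\sigma}$ scaling by taking $\delta\sim t_0^{1/\sigma}$, and multiplies the weight by $e^{\lambda t}$ so that the Lyapunov inequality $\partial_t\varphi+\mathcal L_{C_H}\varphi\geq 0$ cleanly kills the cross terms $\mathcal L\varphi$, $b\cdot D\varphi$, $C_H|D\varphi|$ generated by differentiating the weight. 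You instead take a blow-up coefficient $\Lambda(t)=A(t-t_0/2)^{-\theta/\sigma}$ with a $t_0$-independent $\delta_0$, letting the singularity of $\Lambda$ enforce the boundary condition automatically; the singular time derivative is then absorbed thanks to the constraint $r^\sigma\lesssim(\bar t-t_0/2)$, which is forced by $\Lambda(\bar t)\psi(r)\leq[u]_{\langle x\rangle^k}$ at any positive maximum. This is a legitimate and rather elegant alternative.

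As written, however, there are gaps. (i) The penalization $\phi(x)=\langle x\rangle^{\gamma+1}$ is not admissible when $\gamma+1\geq\sigma$, since then $\mathcal L\phi$ is not finite; you must take $\phi=\langle x\rangle^q$ for some $\gamma<q<\sigma$, as the paper does. (ii) Without the $e^{\lambda t}$ factor you still have to absorb the drift-weight term $\Lambda\psi(r)\bigl[b(\bar x)\cdot D\varphi(\bar x)+b(\bar y)\cdot D\varphi(\bar y)\bigr]$. This requires the lower bound $b(x)\cdot x\gtrsim-\langle x\rangle^2$, which follows from \eqref{bloc} by chaining along segments of length at most one; the resulting $-c\Lambda\psi(r)W$ then has to be compared explicitly against the diffusive gain $\sim c_0\Lambda r^{\theta-\sigma}W$. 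Your list of cross terms covers $\mathcal L W$ and $|DW|$ but not this drift coupling. (iii) The regime discussion is garbled. From $\Phi_\varepsilon>0$ and $|u(\bar t,\bar x)-u(\bar t,\bar y)|\leq[u]_{\langle x\rangle^k}W$ one gets $\Lambda(\bar t)\psi(r)\leq[u]_{\langle x\rangle^k}$, hence $r^\sigma\leq\bigl(2[u]_{\langle x\rangle^k}/A\bigr)^{\sigma/\theta}(\bar t-t_0/2)$ \emph{unconditionally} once $\psi(r)\geq\tfrac12 r^\theta$, so there is no opposite regime; and your claim that the opposite regime gives $\Lambda(\bar t)r^\theta\lesssim A$ has the inequality backwards (it would give $\gtrsim A$). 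You should simply state the constraint and use it to bound $(\bar t-t_0/2)^{-1}\Lambda r^\theta W\lesssim\bigl(2[u]_{\langle x\rangle^k}/A\bigr)^{\sigma/\theta}\Lambda r^{\theta-\sigma}W$, which is absorbed by the diffusive gain once $A$ is large. These are fixable, but they need to be filled in before the argument is complete.
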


\begin{proof}  
We  follow the approach of \cite{porretta2013global} (where this estimate is proved for local operators) with the help of Lemma \ref{Estimates_non_local_operator} to handle the  nonlocal diffusion.   We take  $q\in (\gamma,\sigma)$ so that $u=o( \langle x\rangle^q)$ as $|x|\to \infty$.  We call $\vfi(t,x)= e^{\lambda t}\langle x\rangle^k$, $\Phi(t,x)= e^{\lambda t}\langle x\rangle^q$,  and  we consider 
\begin{align*}
M_0:  =  \sup_{\Delta}   &  \left\{ u(t,x)-u(t,y)  - (\vfi(t,x)+ \vfi(t,y) ) \left[ K \psi(|x-y|) + C_0 (t-t_0)^2 \right]  \right. \\
& \qquad\qquad\qquad\qquad\qquad  \qquad \left.  - \vep (\Phi(t,x)+ \Phi(t,y) ) - \frac\vep{(t-t_0)} \right\}
\\
& \quad \hbox{where $\Delta= \{(x,y,t)\in \R^{2d}\times\R :\,\,  |x-y|\leq \de\,\,,  t\in [\frac{t_0}2,   t_0 )\}$,}
\end{align*}
with $\de\leq 1$, and we claim that $M_0\leq 0$ (for suitable choices of $K, C_0, \de$ and a concave increasing function $\psi$). 

As usual we reason by contradiction, assuming $M_0>0$. In this case $M_0$ cannot be attained for $t=\frac {t_0}2$, nor for $|x-y|=\de$,  provided 
\beq\label{scel}
C_0\, t_0^2 e^{\la t_0/2}> 4\,   [u]_{\langle x\rangle^k} \qquad \hbox{and} \qquad  K\psi(\de) e^{\la t_0/2} >   [u]_{\langle x\rangle^k}\,.
\eeq
This choice implies that $M_0$ is a local maximum attained at some $(t,x,y)$ where  we can use the viscosity formulations. 
In a similar way as in Proposition \ref{lipest}, we obtain then
\begin{align} 
    \label{visco_dance}
    & \partial_t \zeta - \mathcal{L} ( x, u(x), D_{x} \zeta ) + \mathcal{L} ( y, u(y), -D_{y} \zeta ) +   b ( x )\cdott  D_{x} \zeta  -
 b ( y )\cdott ( - D_{y} \zeta )  \\
\notag &  
+ H ( x, D_{x} \zeta ) - H ( y, -D_{y} \zeta )  \leq f ( t,x ) - f (t,  y )
\end{align} 
where 
$$
\zeta=(\vfi(t,x)+ \vfi(t,y) ) \left[ K \psi(|x-y|) + C_0 (t-t_0)^2 \right]  +\vep (\Phi(t,x)+ \Phi(t,y) )\,.
$$
We make use of  Lemma \ref{Estimates_non_local_operator}, observing that, for $\delta <\frac12$, we have 
$$
\sup_{\xi\in x+ B_\de} |D\vfi(\xi)| \leq c_k e^{\lambda t} \langle x\rangle^{k-1}\leq c_k \vfi(t,x) \,,  \quad\hbox{and}\quad \,  \inf_{\xi\in x+ B_\de} \vfi(\xi)\geq c_k^{-1} e^{\lambda t}  \langle x\rangle^k=c_k^{-1} \vfi(t,x)
$$
for some constant $c_k$ only depending on $k$. Hence, from Lemma \ref{Estimates_non_local_operator} we get, for some constants $c_0, c_1$,
\begin{align*}
& \mathcal{L} ( x, u(x), D_{x} \zeta ) - \mathcal{L} ( y, u(y), -D_{y} \zeta ) \leq 
\\ & \left[ K\psi(r) + C_0 (t-t_0)^2 \right] \left(  \cL\vfi(t,x) + \cL\vfi(t,y)\right) + \vep \left( \cL \Phi(t,x)+  \cL \Phi(t,y)\right) 
\\ & \quad + c_0 \,  K  \left[ \vfi(t,x)+ \vfi(t,y)\right] \,   r^{2-\sigma} \,\psi'(r)
\\ & \quad + c_1\,   K  \left[ \vfi(t,x)+ \vfi(t,y)\right]    \int_0^1 (1-s)\int_{B }   \psi'' ( \xi_{r,\sigma})) |\widehat{x-y}\cdott z|^2 \frac{dz}{|z|^{d+\sigma}} ds
\end{align*}
where $r=| x-y|$, $B=\{z\,:\, |z|<\frac{|x-y|}4\}$ and $\xi_{r,\sigma}= r+ 2s(\widehat{x-y}\cdott z)$.

As for the first order terms, using condition \rife{bloc} and \ref{H1} we have
\begin{align*}
& b ( x )\cdott  D_{x} \zeta  -
 b ( y )\cdott ( - D_{y} \zeta )  
+ H ( x, D_{x} \zeta ) - H ( y, -D_{y} \zeta )   \\ & \geq - K\, (\beta+ C_H)  \psi'(r) (\vfi(t,x)+ \vfi(t,y) ) - 2C_H
\\ &+ \left[ K\psi(r) + C_0 (t-t_0)^2 \right] \left( b(x)\cdott D\vfi(t,x)- C_H |D\vfi(t,x)| + b(y) \cdott D\vfi(t,y) - C_H |D\vfi(t,y)| \right)  \\
& + \vep \left( b(x)\cdott D\Phi(t,x)- C_H |D\Phi(t,x)| + b(y) \cdott D\Phi(t,y)- C_H |D\Phi(t,y)|  \right) 
\end{align*}
Putting together all the above estimates, and computing $\partial_t \zeta$, we deduce from \rife{visco_dance}
\begin{align*}
& 
\frac\vep{(t_0-t)^2} + 
\quad 2C_0(t-t_0) (\vfi(t,x)+ \vfi(t,y) )  +  \vep  \left(( \partial_t \Phi + \cL_{C_H}\Phi)(t,x)+ (\partial_t \Phi+\cL_{C_H}\Phi)(t,y)\right)
\\
& + \left[ K\psi(r) + C_0 (t-t_0)^2 \right] \left( (\partial_t\vfi + \cL_{C_H}\vfi)(t,x)+ (\partial_t \vfi+\cL{C_H}\vfi)(t,y)\right) 
\\
& \quad \leq   2C_H + K\,    \left[ \vfi(t,x)+ \vfi(t,y)\right]  \left\{ (\beta+ c_H)  + c_0 \,  r^{2-\sigma} \right\} \psi'(r)  
\\
& \qquad + c_1\,   K  \left[ \vfi(t,x)+ \vfi(t,y)\right]   \int_0^1 (1-s)\int_{B }   \psi'' ( \xi_{r,\sigma}) |\widehat{x-y}\cdott z|^2 \frac{dz}{|z|^{d+\sigma}} ds  \notag
                \\
                & \qquad\qquad\qquad   + \left( f(t,x)- f(t,y)\right)
\end{align*}
where we recall the notation $\cL_{C_H}$ from  \rife{Lambda1}.  Since assumption \rife{bloc} implies 
$b(x)\cdott x \geq - ( |b(0)| + \beta) |x|$,  we estimate, as in the computations of 
Lemma \ref{lemma:supersolution_property}, that $\cL_{C_H}\langle x\rangle^k\geq - c\langle x\rangle^k$ for some $c$  depending on $\beta, |b(0)|, c_H, c_{\sigma,d}, d, \sigma, k$. Hence we can choose  $\lambda$   so that  $\partial_t \vfi + \cL_{C_H}\vfi \geq 0$ in $\R^d$, for all $t\in (0,1)$. Similarly, we can assume that, for the same $\lambda$, it also holds $\partial_t \Phi+ \cL_{C_H}\Phi(t,x) \geq 0$.  Having fixed $\lambda$ this way, we can drop the contributions of the Lyapunov functions and we get
\begin{align*}
& 
\quad 2C_0(t-t_0) (\vfi(t,x)+ \vfi(t,y) )  \leq   2C_H + K\,    \left[ \vfi(t,x)+ \vfi(t,y)\right]  \left\{ (\beta+ c_H)  + c_0 \,  r^{2-\sigma} \right\} \psi'(r)  
\\
& \qquad + c_1\,   K  \left[ \vfi(t,x)+ \vfi(t,y)\right]   \int_0^1 (1-s)\int_{B }   \psi'' ( \xi_{r,\sigma}) |\widehat{x-y}\cdott z|^2 \frac{dz}{|z|^{d+\sigma}} ds  \notag
                \\
                & \qquad\qquad\qquad   + \left( f(t,x)- f(t,y)\right)
\end{align*}
which implies (using $\vfi \geq 1$ and $f(t,x)- f(t,y)\leq  [f]_{\langle x\rangle^k}\left[ \vfi(t,x)+ \vfi(t,y)\right]$)
\begin{align*}
& 
\quad 2C_0(t-t_0)    \leq   2C_H + [f]_{\langle x\rangle^k} + K\,  \left\{ (\beta+ c_H)  + c_0 \,  r^{2-\sigma} \right\} \psi'(r)  
\\
& \qquad + c_1\,   K     \int_0^1 (1-s)\int_{B }   \psi'' ( \xi_{r,\sigma}) |\widehat{x-y}\cdott z|^2 \frac{dz}{|z|^{d+\sigma}} ds \,.
\end{align*}
The  term with $\psi'$ can be estimated as we did in \rife{psi2}; hence,  we obtain
\begin{align*}
2C_0(t-t_0) & \leq   2C_H + [f]_{\langle x\rangle^k}  
\\ & +  c_1\, K  \left\{ \int_0^1 (1-s)\int_{B^- }   (\psi'' ( \xi_{r,s})+ c \, \frac{\psi'(\xi_{r,s})}{\xi^{2-\sigma}_{r,s}}) |\widehat{x-y}\cdott z|^2 \frac{dz}{|z|^{d+\sigma}} ds  \right\}
\end{align*}
for some $c$ independent of $K, \delta$, where  $B^-= \{z\in B\,:\, z\cdot \widehat{x-y}\leq 0\}$.

Hereafter, one can proceed with convenient choices of $\psi$. Namely, for $\theta<1$ we choose $\psi(r)= r^\theta$ and we get, for possibly different $\tilde c_1, \tilde c$, 
$$
 2C_0(t-t_0) \leq 2C_H + [f]_{\langle x\rangle^k} - \tilde c_1\, K\, \theta  (  (1-\theta)  r^{\theta- \sigma} - \tilde c r^{\theta-1})  \,.
$$
Notice that $\theta-\sigma< \theta-1$. Since $2C_0(t-t_0) \geq - C_0 t_0$ and since $r\leq \de$, the above inequality cannot hold  if $\de$ is sufficiently small, $K$ sufficiently big, and if $K \, \de^{\theta-\sigma} \geq n\, (C_0\, t_0+ 2C_H + [f]_{\langle x\rangle^k}) $ for some large $n$.  On account of \rife{scel}, we obtain a contradiction by choosing 
$$
C_0= 4\,  \, \frac{ [u ]_{\langle x\rangle^k}}{t_0^2}\,, \quad K\gtrsim   \,   ([u ]_{\langle x\rangle^k}+ t_0 (2C_H + [f]_{\langle x\rangle^k})) \de^{- \theta} \quad \hbox{and} \quad \de \lesssim \left(\frac{t_0}{ n}\right)^{\frac1\sigma}.
$$
Indeed, in this case one also gets 
\begin{align*}
K \, \de^{\theta-\sigma}  & \gtrsim    ([u ]_{\langle x\rangle^k}+ t_0  (2C_H + [f]_{\langle x\rangle^k})) \de^{-\sigma}  = ( C_0\,   \frac{t_0}4+    (2C_H+ [f]_{\langle x\rangle^k})) t_0\de^{-\sigma}  \\ &  \gtrsim n\, ( C_0\,   \frac{t_0}4+  2C_H + [f]_{\langle x\rangle^k})
\end{align*}
and the assertion is proved. 
 
Once we have proved that $M_0\leq 0$, we let $\vep\to 0$ and we read the estimate for $t=t_0$, obtaining \rife{to} due to the choice of $K,\de$. 

The case $\theta=1$ can be proved  choosing $\psi$ as the solution of the ODE
$$
\begin{cases}
\psi''(r)+ c \frac{\psi' (r)}{r^{2-\sigma}} = -1 &  r\in (0,\de)\\
\psi(0)=0\,,\,\,\psi'(\de)=0\,.
\end{cases}
$$
Notice that $2-\sigma<1$, so $\frac1{r^{2-\sigma}}$ is integrable in $(0,1)$, hence $\psi$ is Lipschitz continuous (and of course increasing and concave). By explicit solution of the ODE, $\psi$ can be estimated and one gets (this is similar to  \cite[Thm 3.3]{porretta2013global}, estimates (3.14)--(3.18)):
\beq\label{psiode}
  \psi'(0)\leq \tilde c \, \de \,,\qquad \psi(r)\leq \psi'(0)r\leq \tilde c\,\de\,r\,,\qquad \psi(\de)\geq \frac1{\tilde c} \, \de^2\,,
\eeq
for some constant $\tilde c>0$.  In this case one needs $\de$  sufficiently small, $K$ sufficiently big, with $K \de^{2-\sigma}\geq n\, (C_0\, t_0+ 2C_H + [f]_{\langle x\rangle^k}) $ for some large $n$.  Due to \rife{scel} and \rife{psiode}, the conclusion follows by taking
$$
C_0= 4\,  \, \frac{ [u ]_{\langle x\rangle^k}}{t_0^2}\,, \quad K\gtrsim   \,   ([u ]_{\langle x\rangle^k}+ t_0  (2C_H+  [f]_{\langle x\rangle^k}) ) \de^{- 2} \quad \hbox{and} \quad \de \lesssim \left(\frac{t_0}{ n}\right)^{\frac1\sigma}.
$$
Then, after letting $\vep \to 0$ and taking $t=t_0$, one gets the estimate
$$
u(t,x)-u(t,y)  \leq    (\vfi(t,x)+ \vfi(t,y) ) K  \psi(|x-y| ) \leq (\vfi(t,x)+ \vfi(t,y) ) \tilde c \, \de\, K |x-y|
$$
where we used \rife{psiode}. Recalling the choices of $K,\de$ yields \rife{to}.

\end{proof}

\begin{remark}\label{lipda0} We stress that, when $\theta=1$, \rife{to} provides with the gradient estimate (e.g. if $C_H=0$),
$$
\|Du(t)\|_{L^\infty(\langle x\rangle^{-k})}\leq \frac K{t^{\frac1\sigma}} \, \left( \sup_{s\in [\frac t2, t]} [u(s)]_{\langle x\rangle^k} \right)   + K\, t^{1-\frac1\sigma} \left( \sup_{s\in [\frac t2, t]} [f(s)]_{\langle x\rangle^k} \right)
$$
for $t\in (0,1)$,  which shows the classical short time regularizing effect for linear $\sigma$-diffusive operators.  

Of course, a similar proof as above can work up to $t=0$ if the initial datum $u_0$ is locally Lipschitz continuous, with a weighted global bound. In this case, there is no need of time localization in the above proof, and one obtains e.g. the estimate
\beq\label{estlipda0}
\|Du(t)\|_{L^\infty(\langle x\rangle^{-k})}\leq  \|Du_0\|_{L^\infty(\langle x\rangle^{-k})} + K \left( \sup_{s\in [0, 1]} [f(s)]_{\langle x\rangle^k} \right)\,,\qquad \forall t\in (0,1).
\eeq
\end{remark}



\end{document}